\documentclass[10pt]{article}
\usepackage{latexsym}
\usepackage{amsmath,amsthm, amssymb}
\usepackage{amssymb,amsfonts}
\usepackage{color}
\usepackage{graphicx}
\usepackage{epsfig}
\usepackage{mathrsfs}
\usepackage{empheq}
\definecolor{purple}{rgb}{0.65, 0, 1}
\definecolor{orange}{rgb}{1,.5,0}

\def\e{\epsilon}
\def\R{\mathbb{R}}

\def\II{{\rm I\kern-0.5exI}}
\def\III{{\rm I\kern-0.5exI\kern-0.5exI}}
\oddsidemargin 0.2in \evensidemargin 0.2in \textwidth 6.2in
\topmargin -0.2in \textheight 8.1in
\parindent 1em
\parskip 2ex
 \setcounter{section}{0}
\numberwithin{equation}{section}
\newtheorem{theorem}{Theorem}[section]
\newtheorem{remark}[theorem]{Remark}
\newtheorem{lemma}[theorem]{Lemma}
\newtheorem{definition}[theorem]{Definition}
\newtheorem{proposition}[theorem]{Proposition}
\newtheorem{corollary}[theorem]{Corollary}

\begin{document}
\title{The Patlak-Keller-Segel model and its variations
: \\ properties of solutions via maximum principle
}
\author{ Inwon Kim and Yao Yao\thanks{Department of Mathematics, UCLA.  The authors are partially supported by NSF 0700732 and NSF 0970072.}}
\date{}
\maketitle
\begin{abstract}
In this paper  we investigate qualitative and asymptotic behavior of solutions for a class of diffusion-aggregation equations. Most results except the ones in section 3 and 6 concern radial solutions. The challenge in the analysis consists of the nonlocal aggregation term as well as the degeneracy of the diffusion term which generates compactly supported solutions. The key tools used in the paper are maximum-principle type arguments as well as estimates on mass concentration of solutions.
\end{abstract}

\section{Introduction}
In this paper we study solutions of a nonlocal aggregation equation with degenerate diffusion, given by
\begin{equation} \label{pde}
\rho_t = \Delta \rho^m + \nabla \cdot (\rho\nabla(\rho*V)) \hbox{ in } \R^d\times [0,\infty)
\end{equation}
with
initial data $\rho_0\in L^1(\R^d;(1+|x|^2) dx) \cap L^\infty(\R^d)$. Here $m>1$, $d\geq 3$ and $*$ denotes the convolution operator. In the absence of the aggregation term (when $V=0$), our equation becomes the well-known {\it Porous medium equation} (PME):
\begin{equation}\label{pme}
\rho_t - \Delta(\rho^m)=0.
\end{equation}
Note that, formally, the mass of solutions is preserved over time:
$$
\int_{\mathbb{R}^d} \rho(\cdot,0) dx = \int_{\mathbb{R}^d} \rho(\cdot,t) dx\hbox{ for all } t>0.
$$

 Nonlocal aggregation phenomena have been studied in various biological applications such as population dynamics (\cite{bocm}, \cite {bucm}, \cite{gm}, \cite{tbl}) and Patlak-Keller-Segel (PKS) models of chemotaxis (\cite{ks}, \cite{ll}, \cite{p},\cite{flp}). In the context of biological aggregation, $\rho$ represents the population density which is locally dispersed by the diffusion term, while $V$ is the interaction kernel that models the long-range attraction.  Recently, there has been a growing interest in models with degenerate diffusion to include over-crowding effects (see for example \cite{tbl}, \cite{bocm}). Mathematically, the equation models competition between diffusion and nonlocal aggregation. 
 
 \vspace{5pt}
 
 In this paper we consider the following two types of potentials:
   \begin{itemize}
\item[(A)](PKS-model) $V(x)$ is a \emph{Newtonian potential}:
\begin{equation}\label{kernelA}
V(x) =\mathcal{N}:= -\frac{c_d}{|x|^{d-2}},
\end{equation}
where $c_d := \cfrac{1}{(d-2)\sigma_d}$, with $\sigma_d$: the surface area of the sphere $\mathbb{S}^{d-1}$ in $\mathbb{R}^d$.
\item[(B)] (regularized Newtonian potential) 
\begin{equation}\label{kernelB}
V(x) = (\mathcal{N}*h)(x),
\end{equation}
\end{itemize}
where $*$ denotes convolution and $h(x)$ is a radial function in $L^1(\R^d:(1+|x|^2) dx)\cap L^\infty(\R^d)$ which is continuous and radially decreasing. 

\vspace{5pt}

Note that (A)-(B) covers all attractive potentials $V$ with its Laplacian being nonnegative and radially decreasing. The restrictions on $\Delta V$ turn out to be necessary for obtaining the  preservation of radial monotonicity (see Proposition~\ref{counter})  as well as the mass comparison principle in section 5.

For $m > 1$, the dynamics in (\ref{pde}) is governed by the energy functional
\begin{equation}
\mathcal{F}(\rho) = \int_{ \mathbb{R}^d} \left(\frac{1}{m-1}\rho^m +\frac{1}{2}\rho(\rho*V) \right)dx, \label{energy}
\end{equation}
and when $m=1$, the first term in the integrand is replaced by $\rho \log \rho$.
Indeed \eqref{pde} is the gradient flow for  $\mathcal{F}$ with respect to the Wasserstein metric (see for example \cite{ags} and \cite{cmv}). Depending on $m$, the solution of  \eqref{pde} exhibits different behavior. For $1\leq m < 2-2/d$, the problem is {\it supercritical}: the diffusion is dominant at low concentrations and the aggregation is dominant at high concentration. As a result supercritical and critical problems with singular kernels may exhibit finite time blow-up phenomena (\cite{dp}, \cite{hv}, \cite{s1}, {\cite{blcm}).  On the other hand solutions globally exist with small mass and relatively regular initial data, and here the diffusion dominates at large length scale (see \cite{c} and \cite{s2}). Indeed using the entropy dissipation method (\cite{cjmtu}) it is shown that the solutions with small $L^1$ and $L^{(2-m)d/2}$- norms converge to the self-similar Barenblatt profile (\cite{ls1}-\cite{ls2} and \cite{b2}).

On the other hand, in the {\it subcritical} regime ($m>2-2/d$),  the diffusion is dominant at high concentration. For this reason there is a global solution for all mass sizes (\cite{s1}, \cite{bcl}, \cite{brb}). Since aggregation dominates in low concentration, one can show that there are compactly supported stationary solutions  for any mass size (see Proposition~\ref{stat_sol_regularity_decreasing})). In fact there is no uniqueness result for stationary solutions, even for radial solutions, except the well-known result of Lieb and Yau (\cite{ly}) for the PKS model. Furthermore, even for the PKS model, there are few results addressing the qualitative behavior of general radial solutions: this is perhaps due to the fact that entropy methods face challenges due to the strong aggregation term and the generic presence of the free boundary. This motivates our investigation in this paper.  

 \vspace{10pt}

 The main tools in our analysis are various types of comparison principles. While maximum-principle type arguments are natural to parabolic PDEs,  the classical maximum principle does not hold with \eqref{pde} due to the nonlocal aggregation term, and therefore the standard comparison principle and the corresponding viscosity solutions theory do not apply. Instead we establish order-preserving properties of several associated quantities:  the radial monotonicity (section 4), the mass concentration (section 5), and the rearranged mass concentration for non-radial solutions (section 6).

The following existence and uniqueness results will be used throughout our paper.

\begin{theorem}[Theorem 3 and 7 in \cite{brb}. Also see \cite{bs} and \cite{s1}]
Let $V$ be given by (A) and (B) and $d\geq 3$.
Suppose $\rho_0$ is a nonegative function in $L^1(\R^d; (1+|x|^2)dx)\cap L^\infty(\R^d).$  Then for $m>2-2/d$ there exists a unique, uniformly bounded weak solution $\rho$ of \eqref{pde} in $\R^d\times[0,\infty)$ with initial data $\rho_0$.
\end{theorem}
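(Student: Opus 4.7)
The plan is to build a solution by a regularization argument, extract uniform estimates from the gradient-flow structure together with a Moser iteration adapted to the subcritical exponent, and then prove uniqueness via an $L^1$-contraction estimate that exploits $\Delta V \ge 0$. First I would mollify the kernel in case (A) by setting $V_\e := V*\eta_\e$ with a standard nonnegative bump $\eta_\e$, smooth the initial data to a strictly positive, rapidly decaying $\rho_{0,\e}$, and add a vanishing linear viscosity $\e\Delta\rho$ to make the equation uniformly parabolic. Classical quasilinear parabolic theory then produces a unique smooth, nonnegative approximate solution $\rho_\e$ on $\R^d\times[0,T]$ for every $T>0$.

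The next step is to establish the following estimates uniformly in $\e$: (i) conservation of mass and nonnegativity, which follow from the divergence structure; (ii) a second-moment bound $\int |x|^2\rho_\e\,dx \le \int|x|^2\rho_0\,dx + Ct$, using the decay of $\nabla V$; (iii) the energy/dissipation identity for $\mathcal{F}$ in \eqref{energy}, which gives an a priori $L^m$-bound after absorbing the interaction term by Hardy--Littlewood--Sobolev and interpolation (here $m>2-2/d$ already enters). The crucial step is a uniform $L^\infty$-bound, obtained by testing the equation against $\rho_\e^{p-1}$:
\begin{equation*}
\tfrac{1}{p}\tfrac{d}{dt}\!\!\int\rho_\e^p\,dx + \tfrac{4m(p-1)}{(m+p-1)^2}\!\int\!|\nabla\rho_\e^{(m+p-1)/2}|^2dx = \tfrac{p-1}{p}\!\int\rho_\e^p\,\Delta(\rho_\e*V_\e)\,dx.
\end{equation*}
Since $\Delta V \geq 0$ (equal to $\delta_0$ in case (A) and to $h\geq0$ in case (B)), the right-hand side is $\le C\int \rho_\e^{p+1}\,dx$ in case (A) and is trivially controlled by $\|h\|_\infty\,M\int\rho_\e^p\,dx$ in case (B). In case (A) one applies the Sobolev inequality to the diffusion term and interpolates $L^{p+1}$ between $L^1$ and $L^{(m+p-1)d/(d-2)}$; the exponent arithmetic closes precisely because $m>2-2/d$, yielding bounds that propagate through a Moser iteration from the $L^m$-bound obtained above up to $L^\infty$, uniformly in $\e$ and locally uniformly in $t$.

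With these uniform bounds in hand, compactness follows from Aubin--Lions: the spatial regularity provided by $\int|\nabla\rho_\e^{(m+1)/2}|^2$ together with a bound on $\partial_t\rho_\e$ in a negative Sobolev space gives strong convergence $\rho_\e\to\rho$ in $C([0,T];L^1(\R^d))$ up to a subsequence, while the moment bound prevents escape of mass at infinity. Passing to the limit in the weak formulation is routine once one notes that $\nabla(\rho_\e*V_\e)\to\nabla(\rho*V)$ strongly on compacts, using the $L^\infty$-bound and the convolution smoothing. For uniqueness, I would run the standard $L^1$-contraction: for two solutions $\rho,\tilde\rho$ with the same data, Kato's inequality applied to $|\rho-\tilde\rho|$ combined with the monotonicity of $s\mapsto s^m$ (to absorb the diffusion difference with the right sign) and the sign condition $\Delta V\ge 0$ (to estimate the aggregation difference by $C\|\rho-\tilde\rho\|_{L^1}$) yields $\tfrac{d}{dt}\|\rho-\tilde\rho\|_{L^1}\le C\|\rho-\tilde\rho\|_{L^1}$, whence Gr\"onwall.

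The main obstacle is the uniform $L^\infty$-bound in the subcritical regime. Everything else---energy dissipation, moment control, compactness, $L^1$-contraction---is essentially standard once the pointwise bound is in place, but verifying that the Moser recursion actually improves requires the exponents in Sobolev interpolation to align, and this alignment holds exactly at the threshold $m>2-2/d$. The singularity of the Newtonian kernel in case (A) is what makes this delicate; case (B), where $V$ is regular, is considerably easier and follows from much softer arguments.
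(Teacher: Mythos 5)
This theorem is not proved in the paper; it is cited verbatim from \cite{brb} (Theorems 3 and 7), with pointers to \cite{bs} and \cite{s1}. Your existence machinery --- mollify the kernel and data, add vanishing viscosity, derive mass/moment/energy bounds, obtain a uniform $L^\infty$-bound by testing against $\rho^{p-1}$ and running an Alikakos/Moser iteration, then pass to the limit via Aubin--Lions --- is in the same spirit as the cited references, and your exponent arithmetic showing that both the absorption in the free-energy estimate and the Sobolev interpolation close exactly when $m>2-2/d$ is correct. One small point: your iteration gives a bound that is only \emph{locally} uniform in $t$, whereas the theorem asserts uniform boundedness on $\R^d\times[0,\infty)$; in \cite{brb} (see the remark after Theorem~\ref{unif_cont} in this paper) the global-in-time bound is a separate statement (Theorem 10 of \cite{brb}) and requires the subcritical hypothesis to prevent the constant from degenerating as $T\to\infty$.

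The genuine gap is in the uniqueness step. The $L^1$-contraction estimate does not close for this equation. Writing $w=\rho-\tilde\rho$ and decomposing
$\rho\nabla(\rho*V)-\tilde\rho\nabla(\tilde\rho*V)=w\nabla(\rho*V)+\tilde\rho\nabla(w*V)$,
Kato's inequality makes the diffusion difference and the $w\nabla(\rho*V)$ piece nonpositive (the latter cancels identically), and the zeroth-order piece $\int\mathrm{sgn}(w)\,\tilde\rho\,(w*\Delta V)$ is indeed bounded by $\|\tilde\rho\|_\infty\|\Delta V\|_1\|w\|_1$ as you say. But there remains the cross term $\int\mathrm{sgn}(w)\,\nabla\tilde\rho\cdot\nabla(w*V)\,dx$, and for case (A) this cannot be controlled by $\|w\|_1$: $\nabla\tilde\rho$ need not be integrable for a weak solution, and $\|\nabla(w*\mathcal{N})\|_\infty$ is controlled only by $\|w\|_\infty+\|w\|_1$, not by $\|w\|_1$ alone, so the Gr\"onwall inequality you invoke does not close. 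The sign condition $\Delta V\ge0$ does not rescue this term. The uniqueness proof in \cite{bs} and \cite{brb} is instead run in $\dot H^{-1}$: with $-\Delta\psi=w$ one computes $\tfrac{1}{2}\tfrac{d}{dt}\|w\|_{\dot H^{-1}}^2$, uses the monotonicity of $s\mapsto s^m$ to discard the diffusion difference, exploits that for $V=\mathcal{N}$ one has $\nabla(w*V)=-\nabla\psi$ (so the $\tilde\rho\nabla(w*V)$ part is controlled by $\|\tilde\rho\|_\infty\|\nabla\psi\|_2^2$), and bounds the remaining cross term by a Calder\'on--Zygmund estimate on $D^2(\rho*V)$ using the already-established $L^\infty$-bound. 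You should replace the $L^1$-contraction step with this $H^{-1}$ argument.
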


\textbf{Acknowledgments:} 
We thank Scott Armstrong, Jacob Bedrossian and Thomas Laurent for helpful discussions and communications. 

\subsection{Summary of results}

Let us begin with stating properties of radial, stationary solutions of \eqref{pde}:
\begin{theorem}[Properties of radial stationary solutions]\label{theorem1}
Let $V$ be given by (A) or (B) and let $m> 2-\frac{2}{d}$. Let $\rho_A$ be a non-negative radial stationary solution of \eqref{pde} with $\int \rho_A(x) dx = A>0$. Then
\begin{itemize}
\item[(a)] $\rho_A$ is radially decreasing, compactly supported and smooth in its support (Proposition~\ref{stat_sol_regularity_decreasing});
\item[(b)] $\rho_A$ is uniquely determined for any given $A$ (Theorem 2.2 and Theorem~\ref{uniqueness2}).
\end{itemize}
\end{theorem}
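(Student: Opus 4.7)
First I would extract the key first integral of the stationary equation. Writing $W := \rho * V$, the stationary equation $\Delta \rho^m + \nabla \cdot (\rho \nabla W) = 0$ can be rewritten, on the open set $\{\rho>0\}$, as $\nabla \cdot \bigl( \rho \nabla ( \tfrac{m}{m-1}\rho^{m-1} + W ) \bigr) = 0$. Working on a single connected component of the support and integrating the corresponding radial equation would give the Euler--Lagrange-type identity
\begin{equation*}
\tfrac{m}{m-1}\rho^{m-1}(r) + W(r) = C \quad \text{on that component,}
\end{equation*}
for some constant $C$. Since $\rho$ is radial and nonnegative, $W$ is radial with $\Delta W = \rho$ in case (A) and $\Delta W = \rho * h$ in case (B); in either case $\Delta W \ge 0$ on $\mathbb{R}^d$. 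Using $(r^{d-1}W_r)_r = r^{d-1}\Delta W$ and $W_r(0)=0$, we obtain $W_r(r) = r^{1-d}\int_0^r s^{d-1}\Delta W(s)\,ds \ge 0$, so $W$ is radially nondecreasing. The identity above then forces $\rho^{m-1}$, hence $\rho$, to be radially nonincreasing on each component of its support. A short connectedness argument, using that $W$ is strictly increasing wherever $\Delta W>0$, rules out annular gaps, so $\{\rho>0\} = B_R$ for some $0<R\le \infty$.

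Next I would rule out $R=\infty$ to establish compact support. If the support were all of $\mathbb{R}^d$, then $\rho(r)\to 0$ and $W(r)\to 0$ as $r\to\infty$, forcing $C=0$ and hence $\rho^{m-1}(r) = -\tfrac{m-1}{m}W(r)$ on $\mathbb{R}^d$. Newton's theorem gives the asymptotic $W(r)\sim -c_d A/r^{d-2}$ at infinity, so $\rho(r)\sim c\,r^{-(d-2)/(m-1)}$ for some $c>0$. But the subcritical condition $m>2-2/d$ is equivalent to $(d-2)/(m-1)<d$, so this tail is not in $L^1(\mathbb{R}^d)$, contradicting $\int\rho = A < \infty$. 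Hence $R<\infty$. Smoothness of $\rho$ on $B_R$ follows by bootstrap from $\rho^{m-1} = \tfrac{m-1}{m}(C-W)$: since $\rho$ is bounded and compactly supported, standard potential estimates give $W \in C^\infty(B_R)$, and $z\mapsto z^{1/(m-1)}$ is smooth on $(0,\infty)$.

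For part (b), the plan is to reduce uniqueness to a shooting-type problem. By part (a), a stationary profile of mass $A$ is determined on $B_R$ by the closed equation $\rho^{m-1}(r) = \tfrac{m-1}{m}(W(R)-W(r))$ together with the mass constraint $\int_{B_R}\rho = A$. For case (A), differentiating and using $\Delta W = \rho$ converts this into a second-order ODE in $r$ with the conditions $\rho_r(0)=0$ and $\rho(R)=0$; a monotonicity-in-$R$ argument in the spirit of Lieb--Yau then selects a unique $R$ realizing the prescribed mass. For case (B) the ODE reduction fails because $\Delta W = \rho*h$ couples $W$ non-locally to $\rho$; instead I would invoke the mass comparison principle of Section~5, viewing two candidate stationary profiles of the same mass as time-invariant solutions of the dynamics and concluding that their cumulative masses $M_i(r) := \int_{B_r} \rho_A^i$ must agree, so $\rho_A^1 \equiv \rho_A^2$.

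The main obstacle I expect is uniqueness in case (B): without a Newtonian ODE in hand, one must propagate rigidity between two \emph{a priori} unrelated stationary profiles, which is precisely the role of the mass comparison framework developed later in the paper.
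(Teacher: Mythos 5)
Part (a) of your proposal matches the paper's argument closely: you extract the first integral $\tfrac{m}{m-1}\rho^{m-1}+W=C$, use $W_r\ge 0$ (from $\Delta V\ge0$, which is exactly \eqref{nabla_rho*V}--\eqref{nabla_rho*V_2}) to get radial monotonicity, bootstrap regularity through the same identity, and rule out $R=\infty$ by the tail estimate $\rho\sim r^{-(d-2)/(m-1)}\notin L^1$ when $m>2-\tfrac{2}{d}$. This is Proposition~\ref{stat_sol_regularity_decreasing} almost verbatim.

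For part (b), case (A) also matches (Lieb--Yau). But for case (B) your plan has a genuine gap. You write that you would ``invoke the mass comparison principle\dots viewing two candidate stationary profiles of the same mass as time-invariant solutions\dots and concluding that their cumulative masses must agree.'' The mass comparison principle (Proposition~\ref{comp_concentration}) is order-\emph{preserving}: it lets you propagate $\rho_1\prec\rho_2$ forward in time, but it gives nothing when you feed in two stationary profiles that are not a priori ordered. Two stationary solutions $\rho_A^1,\rho_A^2$ with the same mass need not satisfy $\rho_A^1\prec\rho_A^2$ or $\rho_A^2\prec\rho_A^1$, so no conclusion about their equality follows from comparison alone. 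The paper's actual mechanism (Theorem~\ref{exp_conv} $\Rightarrow$ Corollary~\ref{cor:convergence} $\Rightarrow$ Corollary~\ref{cor:uniqueness}) is asymptotic, not rigidity: one fixes a single stationary profile $\rho_A$, builds explicit sub- and supersolutions $\phi(r,t)=k(t)^d\rho_A(k(t)r)$ and $\eta(r,t)$ with $k(t)\to 1$ exponentially, and sandwiches an arbitrary continuous compactly supported radial dynamical solution of mass $A$ between them via mass comparison, yielding $M(r,t)\to M(r;\rho_A)$ and then $\|\rho(\cdot,t)-\rho_A\|_\infty\to0$. Uniqueness of the stationary profile then follows because the \emph{same} trajectory cannot converge in $L^\infty$ to two different limits. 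So you correctly identified the right tool and the right obstruction, but the step from ``mass comparison framework'' to ``$M_1=M_2$'' is missing and would not close without the convergence theorem. You would also need the sandwiching construction itself (which relies on $\rho_A$ being radially decreasing, compactly supported, and with $\rho_A(0)>0$ --- i.e., part (a)) to get the ODE for $k(t)$ and the lower bound \eqref{bound_for_M}.
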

When $V$ is given by (A), the uniqueness of radial stationary solution comes from the well-known results of  Lieb and Yau (\cite{ly}). Their proof is based on the fact that the mass function satisfies an ODE with uniqueness properties; this property fails when $V$ is given by (B).  Instead, we look at the dynamic equation \eqref{pde}, and prove uniqueness out of asymptotic convergence towards a stationary solution. A more direct proof of uniqueness and the uniqueness of general (possibly non-radial) stationary solutions are  interesting open questions. We also mention a recent preprint \cite{bdf}, which studies another type of diffusion-aggregation equation: here authors use eigenvalue methods to prove the uniqueness of one-dimensional stationary solutions. 

Next we show several results concerning the qualitative behavior of general (nonradial) solutions, which will be used in the rest of the paper:
\begin{theorem}[Properties of solutions]
Suppose $m>1$. Let $V$ be given by (A) or (B), and let $\rho(x,t)$ be a weak solution to \eqref{pde}, which is uniformly bounded in $\mathbb{R}^d \times [0,T)$. Then the following holds:
\begin{itemize}
\item[(a)] For any $\delta>0$, $\rho$ is uniformly continuous in $\R^d\times [\delta,T)$; (Theorem \ref{unif_cont})
\item[(b)] [Finite propagation property] $\{\rho>0\}$ expands over time period $\tau$ with maximal rate of $C\tau^{-1/2}$ (Theorem \ref{unif_cont});
\item[(c)] If $\rho(\cdot, 0)$ is radial and radially decreasing, then so is $\rho(\cdot,t)$ for any $t\in[0,T)$ (Theorem \ref{decreasing}).
\end{itemize}
\end{theorem}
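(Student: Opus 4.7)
The unifying strategy is to view \eqref{pde} as a porous medium equation with a bounded lower-order perturbation. Since $\rho\in L^1(\R^d)\cap L^\infty(\R^d)$ uniformly in $t$ and $V$ is (a smoothing of) the Newtonian potential, the drift $b(x,t):=\nabla(\rho*V)$ is uniformly bounded and $\Delta(\rho*V)$ equals $\rho$ (case (A)) or $\rho*h$ (case (B)), also uniformly bounded. Expanding the divergence, \eqref{pde} reads $\rho_t=\Delta\rho^m+b\cdot\nabla\rho+c\,\rho$ with $b,c\in L^\infty$, a form amenable to standard porous-medium barriers and regularity theory.

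\textbf{Parts (a) and (b).} For (b), fix $t_0$ and a point $x_0$ lying at distance $r>0$ from $\mathrm{supp}\,\rho(\cdot,t_0)$. I would compare $\rho$ with a translated Barenblatt-type supersolution
\begin{equation*}
\overline\rho(x,t)=\bigl(A-B|x-x_0-(t-t_0)\xi|^2\bigr)_+^{1/(m-1)},
\end{equation*}
choosing $A,B,\xi$ to absorb $\|b\|_\infty,\|c\|_\infty$ and so that $\overline\rho$ dominates $\rho(\cdot,t_0)$ near $x_0$. The support of $\overline\rho$ grows at rate $\lesssim\sqrt\tau$, so $r\gtrsim\sqrt\tau$ forces $\rho(x_0,t_0+\tau)=0$, yielding the asserted $C\tau^{-1/2}$ bound. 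For (a), once finite propagation is in place, local H\"older regularity of the pressure $P=\tfrac{m}{m-1}\rho^{m-1}$ follows from the standard theory for degenerate parabolic equations with bounded drift (DiBenedetto, V\'azquez); uniform continuity on $\R^d\times[\delta,T)$ is obtained by coupling these interior bounds with the support-growth estimate from (b).

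\textbf{Part (c) and the main obstacle.} For (c), I would use a reflection argument. Fix any hyperplane $H$ not meeting the origin and write $R=R_H$, $H^+:=\{x:|Rx|<|x|\}$. Since $V$ is radial, $\tilde\rho(x,t):=\rho(Rx,t)$ also solves \eqref{pde}, and radial monotonicity of $\rho(\cdot,t)$ is equivalent to $w:=\rho-\tilde\rho\leq 0$ on $H^+$ for every admissible $H$; the function $w$ is automatically antisymmetric under $R$, and the hypothesis gives $w\leq 0$ at $t=0$. The goal is a parabolic comparison principle for $w$ on $H^+$ with Dirichlet data $w=0$ on $\partial H^+$. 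The obstacle is the nonlocal term: a pointwise sign for $w$ on $H^+$ does not immediately transfer to the convolution $(\rho-\tilde\rho)*V$, which integrates over all of $\R^d$. The hypothesis that $\Delta V\geq 0$ is radially decreasing is exactly what rectifies this. Antisymmetry of $w$ rewrites the convolution against any radial kernel $\Phi$ as
\begin{equation*}
(w*\Phi)(x)=\int_{H^+}w(y)\bigl[\Phi(x-y)-\Phi(x-Ry)\bigr]\,dy,
\end{equation*}
and for $x,y\in H^+$ one always has $|x-y|\leq|x-Ry|$. Applied with $\Phi=V$ (radially increasing) this gives $(w*V)\geq 0$, and applied with $\Phi=\Delta V$ (radially decreasing) it gives $(w*\Delta V)\leq 0$ on $H^+$. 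The second sign in particular produces a favorable zeroth-order contribution $\tilde\rho\,(w*\Delta V)\leq 0$ in the evolution equation for $w$, exactly what is needed for a maximum-principle argument preserving $w\leq 0$. The degeneracy of the PME at the free boundary is the remaining technical pain point; I would regularize ($\rho^m\to(\rho+\e)^m$ and $V\to V_\e$), prove the inequality for the smooth approximations, and pass to the limit using parts (a) and (b).
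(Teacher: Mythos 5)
For parts (a) and (b) your plan matches the paper's: treat the nonlocal drift $\Phi=\rho*V$ as a prescribed bounded potential with $|\nabla\Phi|,|\Delta\Phi|\in L^\infty$, then invoke DiBenedetto-type continuity for the interior estimate and compare $\rho$ (or its pressure) with an explicit supersolution whose free boundary advances at rate $O(h^{1/2})$. The paper's barrier is an inf-convolution built from Proposition~2.13 of \cite{kl} rather than a translated Barenblatt profile, and it also needs the comparison principle of \cite{bh} for \eqref{fixed_drift} with a time-dependent fixed drift, but these are implementation differences of the same idea.

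For (c), however, your reflection argument is a genuinely different route from the paper, and as written it has a gap in exactly the place that matters. The paper works with the two-point quantity $w(t)=\sup_{|a|\le|b|}\rho(b,t)-\rho(a,t)$; at an extremal pair $(a_1,b_1)$ on a ray, $\rho(\cdot,t_1)$ has an \emph{unconstrained} local minimum at $a_1$ and local maximum at $b_1$, so that $\nabla\rho(a_1,t_1)=\nabla\rho(b_1,t_1)=0$. This makes the entire first-order contribution $\nabla\rho\cdot\nabla(\rho*V)$ drop out at both points, leaving only the zeroth-order terms $\rho(\rho*\Delta V)$, which Lemma~\ref{integral_lemma} controls by $w(t_1)$. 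In your reflection setup with $w=\rho-\tilde\rho$ on $H^+$, expanding the divergence gives (up to regrouping)
\begin{equation*}
w_t=\Delta(\rho^m-\tilde\rho^m)+\nabla w\cdot\nabla(\rho*V)+w\,(\rho*\Delta V)+\nabla\tilde\rho\cdot\nabla(w*V)+\tilde\rho\,(w*\Delta V),
\end{equation*}
and at an interior positive maximum of $w$ the only information is $\nabla w=0$, i.e.\ $\nabla\rho=\nabla\tilde\rho\neq 0$ in general. You correctly sign $w*V$ and $w*\Delta V$ (under the assumption $w\le 0$, which is anyway circular at the contradiction point), but those signs do not control the \emph{gradient} $\nabla(w*V)=w*\nabla V$; the term $\nabla\tilde\rho\cdot\nabla(w*V)$ has no favorable sign, and it is not bounded by $\sup w^+$ either, since $w*\nabla V$ depends on $w$ through both its $L^1$ and $L^\infty$ size. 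So the Gronwall-type closure that your proposal asserts does not go through, and the regularization step you mention addresses only the PME degeneracy, not this missing estimate. This is precisely the obstruction that the paper's choice of the two-point functional is designed to eliminate.

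A secondary, smaller point: in (b) your translated barrier $\overline\rho$ is centered at $x_0$, so $\overline\rho(x_0,t_0)>0$, which is the opposite of what you need for the exclusion $\rho(x_0,t_0+\tau)=0$. The barrier should vanish on a neighborhood of $x_0$ at $t_0$ and dominate $\rho$ elsewhere, with the zero set shrinking no faster than $Ch^{1/2}$; the paper's construction has exactly this geometry.
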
\label{theorem2}

Both properties (b) (the finite propagation property of the general solutions) and (c) (the preservation of radial monotonicity) are new, to the best of the authors' knowledge, for any type of diffusion-aggregation equation.  For the first-order aggregation equation (\eqref{pde} without the diffusion term),  property (c) has been recently shown in \cite{bgl} for the same class of potentials, via the method of characteristics. 

We now turn to the discussion of asymptotic behavior of solutions.

\begin{theorem} [Asymptotic behavior: subcritical regime]Let $V$ be given by (A) or (B), $m>2-\frac{2}{d}$, and let $\rho(x,t)$ be the solution to \eqref{pde} with radial, compactly supported initial data $\rho_0(x)\in L^1(R^d;(1+|x|^2)dx)\cap L^\infty(\R^d)$ which has mass $A$. Let $\rho_A$ be a radial stationary solution with mass $A$. Then
\begin{itemize}
\item[(a)] The support of $\rho$, $\{\rho(\cdot,t)>0\}$ stays inside of a large ball $\{|x|\leq R\}$ for all $t\geq0$, 
where $R$ depends on $m, d, V$ and the initial data $\rho_0$ (Corollary~\ref{stay_in_compact_set});

\item[(b)]  $\rho$ converges to $\rho_A$ exponentially fast in $p$-Wasserstein distance for all $p>1$ (Corollary~\ref{wasserstein}), and $\|\rho(\cdot, t)-\rho_A\|_{L^\infty(\mathbb{R}^d)} \to 0$ as $t\to \infty$ (Corollary~\ref{cor:convergence}).
\end{itemize}
\label{thm:subcritical}
\end{theorem}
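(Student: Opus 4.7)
The plan is to combine the mass concentration comparison developed in Section~5 with the gradient-flow structure of~\eqref{pde} with respect to the Wasserstein metric.

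For part~(a), the strategy is to bracket $\rho(\cdot,t)$ from above, in the mass concentration order, by a radial profile with uniformly controlled support. Since $\rho_0$ is radial and compactly supported with mass $A$, I would seek a radial profile $\eta_0$ of mass $A$ that is less concentrated than $\rho_0$, in the sense $\int_{B_r}\eta_0\,dx \leq \int_{B_r}\rho_0\,dx$ for every $r>0$. By the mass comparison principle of Section~5 this ordering is preserved in time, and a direct computation shows that a less-concentrated radial profile has larger support; consequently it suffices to exhibit one such $\eta_0$ whose evolution $\eta(\cdot,t)$ remains inside a fixed ball. Natural candidates come from a stationary profile $\rho_B$ of mass $B>A$: suitably truncating/rescaling $\rho_B$ to mass $A$ yields a spread-out envelope whose comparison with the stationary solution keeps the support uniformly bounded, yielding the desired $R$.

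For part~(b), the argument splits into qualitative convergence and the quantitative rate. The gradient-flow dissipation identity
\[
\frac{d}{dt}\mathcal{F}(\rho(t)) = -\int \rho \left|\nabla\left(\tfrac{m}{m-1}\rho^{m-1}+\rho*V\right)\right|^2 dx \leq 0,
\]
combined with the fact that $\mathcal{F}$ is bounded below in the subcritical regime, gives an $L^2_t$ control of the dissipation. The support bound from~(a) and the uniform continuity of Theorem~2(a) yield precompactness of $\{\rho(\cdot,t)\}_{t\geq 0}$ in $C^0$; a standard LaSalle-type argument identifies any subsequential limit $\rho_\infty$ as a radial stationary solution of mass $A$, which by Theorem~1.1(b) must equal $\rho_A$. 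Uniqueness of the limit upgrades subsequential to full convergence in $C^0$, hence in $L^\infty$ thanks to the bounded support. For exponential decay in $W_p$, I would couple the energy dissipation with a local coercivity estimate $\mathcal{F}(\rho)-\mathcal{F}(\rho_A)\geq c\,W_2^2(\rho,\rho_A)$ near $\rho_A$: this gives exponential decay of $\mathcal{F}(\rho(t))-\mathcal{F}(\rho_A)$, hence of $W_2$, once $\rho(\cdot,t)$ lies in a suitable neighborhood, and the uniform support bound from~(a) converts $W_2$-decay into $W_p$-decay for every $p>1$.

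The main obstacle will be the support-confinement in~(a): pointwise comparison is unavailable because of the nonlocal aggregation, and $\rho_A$ itself need not dominate $\rho_0$ in the mass concentration order, so the proof has to work with a carefully chosen non-stationary envelope and exploit the subtler tools of Section~5. A second delicate point is the coercivity estimate behind the exponential rate: the attractive aggregation term breaks global displacement convexity of $\mathcal{F}$, so coercivity must be established locally near the minimizer via a quantitative stability analysis of the constrained variational problem $\inf\{\mathcal{F}(\rho):\int\rho=A\}$, whose minimizer is unique by Theorem~1.1(b).
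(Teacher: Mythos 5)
Your proposal for part (a) is essentially the paper's argument, with one small inefficiency: you speak of a spread-out profile $\eta_0 \prec \rho_0$ of the same mass and needing to show its \emph{evolution} $\eta(\cdot,t)$ stays bounded. The paper avoids this secondary problem entirely: it uses the \emph{time-independent} profile $\rho_1(x,t):=a^d\rho_A(ax)$ for small $a<1$, checks via \eqref{velocity_field_rescaled} that its velocity field points inward when $m>2-2/d$, and invokes Corollary~\ref{velocity_field_corollary} to conclude that the \emph{static} profile itself is a subsolution of \eqref{pde_for_m}. Mass comparison then gives $\rho_1\prec\rho(\cdot,t)$ directly, and since $\rho_1$ and $\rho(\cdot,t)$ have the same mass $A$ while $\rho_1$ is supported in a fixed ball, $\rho(\cdot,t)$ is confined to that same ball. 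There is no need to track the evolution of a second weak solution.

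For part (b), however, you take a genuinely different route (energy dissipation $+$ LaSalle $+$ a local coercivity inequality), and this route has two concrete gaps. First, to identify the subsequential limit $\rho_\infty$ with $\rho_A$, you invoke Theorem~\ref{theorem1}(b). But for potentials of type (B) the uniqueness of the radial stationary solution is \emph{not} an independent input: it is Theorem~\ref{uniqueness2}, which the paper proves as Corollary~\ref{cor:uniqueness}, which in turn follows from the very convergence result you are trying to establish (Theorem~\ref{exp_conv} $\Rightarrow$ Corollary~\ref{cor:convergence} $\Rightarrow$ Corollary~\ref{cor:uniqueness}). Your scheme is therefore circular in case (B). Second, the local coercivity estimate $\mathcal{F}(\rho)-\mathcal{F}(\rho_A)\gtrsim W_2^2(\rho,\rho_A)$ is nowhere established in the paper and is a substantial open problem here: the attractive aggregation term destroys displacement convexity of $\mathcal{F}$, so the standard HWI-type route is unavailable, and no quantitative stability analysis of the constrained minimization problem is carried out. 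You flag this as delicate, but as stated it is a missing ingredient, not a step that can be waved through. The paper sidesteps both difficulties by staying entirely within the mass-comparison framework: it constructs explicit self-similar sub- and super-solutions $\phi(x,t)=k_-^d(t)\rho_A(k_-(t)x)$ and $\eta(x,t)=k_+^d(t)\rho_A(k_+(t)x)$ with $k_\pm$ solving scalar ODEs \eqref{ode}, proves $k_\pm(t)\to 1$ exponentially, deduces exponential convergence of $M(\cdot,t;\rho)$ to $M(\cdot;\rho_A)$ by sandwiching, and then reads off the $W_p$ rate from an explicit monotone transport map (Corollary~\ref{wasserstein}) and the $L^\infty$ convergence from uniform continuity (Corollary~\ref{cor:convergence}). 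Note also that uniqueness of $\rho_A$ is \emph{not} used in this argument; it is instead obtained as a by-product, which is exactly what resolves the circularity you would otherwise face.
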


The proof of above theorem is based on the {\it mass comparison}, i.e. maximum principle arguments on the mass concentration of solutions (see Proposition 5.3). The mass comparison property have been previously observed for PKS models (\cite{bkln}; also see a recent preprint of \cite{clw}). However the property has not been fully taken advantage of, perhaps because of the success of entropy method for the KS model. 

Our method  also provides interesting results for asymptotic behavior of radial and non-radial solutions in the supercritical regime, when the solution starts from  sufficiently less concentrated initial data in comparison to a re-scaled stationary profile. (For the definition of ``less concentrated than'', see Definition \ref{def:less_concentrated}) We point out that in our result the mass does not need to be small as required in previous literature (e.g. see \cite{b1}), and provides an explicit description of solutions which are ``sufficiently scattered" so that it does not blow up in finite time.

\begin{theorem} [Asymptotic behavior: supercritical regime] \label{above101}
Let $V(x)$ be given by (A) or (B), and let $1<m< 2-\frac{2}{d}$. Assume $\rho_0$ is radially symmetric, compactly supported and has mass $A$. Then there exists a sufficiently small constant $\delta>0$ depending on $d, m, A$ and $V$, such that if 
$$
\rho_0(\lambda) \prec \delta^d \mu_A(\delta \lambda),
$$  
where ``$\prec$'' is defined in Definition \ref{def:less_concentrated} and $\mu_A(\lambda)$ is given in \eqref{stat_sol_as_tau_to_infty},  then the weak solution $\rho$ with initial data $\rho_0$ exists globally and algebraically converges to the Barenblatt profile in rescaled variables (Corollary~\ref{exp_conv_m<2_corollary}).
\end{theorem}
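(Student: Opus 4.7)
My strategy is to pass to PME self-similar variables, in which the Barenblatt profile becomes a steady state of the diffusion-plus-confinement part, and then exploit the mass comparison principle of Proposition~5.3 to rule out finite-time blowup; algebraic convergence then follows from a standard PME entropy argument with the nonlocal term treated as a perturbation whose size is controlled by $\delta$. Concretely, introduce the rescaled variables $\lambda = x/R(t)$, $\tau = \log R(t)$, $u(\lambda,\tau) = R(t)^d \rho(x,t)$ with $R(t) = (1+t)^{1/[d(m-1)+2]}$. The rescaled equation takes the Fokker--Planck form
$$
u_\tau = \Delta_\lambda u^m + \nabla_\lambda\!\cdot\!(\lambda u) + (\text{rescaled aggregation}),
$$
whose $\tau\to\infty$ stationary profile is $\mu_A$ from \eqref{stat_sol_as_tau_to_infty} and reduces to the Barenblatt profile $U_A$ when aggregation is absent. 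The dilation $\delta^d \mu_A(\delta\lambda)$ has the same mass $A$ and, for $\delta \ll 1$, is a very flat, spread-out density.

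The hypothesis $\rho_0 \prec \delta^d \mu_A(\delta\lambda)$ exactly says that the mass function of $\rho_0$ lies below that of the dilated profile. I would verify that, transported into the self-similar frame, this flattened profile is a \emph{mass supersolution} of the rescaled equation for all $\tau\ge 0$, provided $\delta$ is sufficiently small: on such diffuse densities the rescaled aggregation term is dominated by diffusion plus harmonic drift, and smallness of $\delta$ supplies the quantitative margin. The rescaled version of Proposition~5.3 then propagates the mass ordering for all $\tau\ge 0$, and combined with the preservation of radial monotonicity (Theorem~\ref{theorem2}(c)) this yields a uniform $L^\infty$ bound $\|u(\cdot,\tau)\|_\infty \le \|\delta^d \mu_A(\delta\,\cdot\,)\|_\infty$ and hence global existence of $\rho$.

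For the algebraic convergence to the Barenblatt profile, I would run the entropy/entropy-dissipation scheme of \cite{cjmtu} in the rescaled frame: the relative entropy of $u(\cdot,\tau)$ with respect to $U_A$ decays under the rescaled PME flow, while the aggregation term contributes only a lower-order perturbation that can be absorbed, with an $O(\delta)$ constant, into the entropy dissipation, yielding a Gronwall inequality with algebraic rate. A Csisz\'ar--Kullback-type inequality then converts entropy decay into the stated algebraic convergence. The main obstacle will be verifying the mass-supersolution property of $\delta^d \mu_A(\delta\lambda)$ in the rescaled frame: since the problem is supercritical, the aggregation rescales \emph{unfavorably}, so the smallness of $\delta$ has to be tuned explicitly to overcome this amplification---this is precisely the step that fixes the threshold on $\delta$ in the statement. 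Once this supersolution property is in place, the remaining ingredients are essentially the subcritical machinery of Section~5 together with the classical PME self-similar asymptotics.
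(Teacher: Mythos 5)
Your high-level strategy (pass to Fokker--Planck self-similar variables, then use mass comparison to rule out blowup) is the right one, but the proposal has three substantive gaps.

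First, the claim that the static profile $\delta^d\mu_A(\delta\lambda)$ is a \emph{mass supersolution} in the rescaled frame is incorrect. For $k<1$, the inward velocity field of $k^d\mu_A(k\lambda)$ in the rescaled equation is
\[
v = (1-k^{d(m-1)+2})\beta r + e^{(1-\alpha)\tau}\frac{\mathcal{\tilde M}}{\sigma_d r^{d-1}} \geq 0,
\]
i.e.\ it points \emph{inward}: the harmonic confinement and aggregation both win against the weak diffusion on a flat profile. By Corollary~\ref{velocity_field_corollary} this makes the static dilated profile a \emph{sub}solution, not a supersolution; it traps the solution from below, not above, and therefore gives no $L^\infty$ control. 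The paper instead constructs a \emph{time-dependent} supersolution $\eta(\lambda,\tau)=k(\tau)^d\mu_A(k(\tau)\lambda)$ with $k(0)=\delta$ and $k$ solving the ODE~\eqref{ode_k}, and the entire point of the small-$\delta$ threshold is to ensure (via Lemma~\ref{technical}) that this ODE has a globally bounded solution with $k(\tau)\to 1$ exponentially, rather than blowing up from the $k^{d+1}$ term. This is the step your proposal omits.

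Second, the convergence part via entropy/entropy-dissipation is a real departure from the paper and is not obviously feasible: the introduction explicitly remarks that entropy methods face challenges here due to the strong aggregation term and the free boundary. The paper's own convergence proof is purely comparison-based: trap $\mu$ between the subsolution $\phi$ and supersolution $\eta$, both of which are explicit rescalings of $\mu_A$ converging exponentially in $\tau$, so that $M(\cdot,\tau;\mu)\to M(\cdot;\mu_A)$ exponentially and one then converts back via the Wasserstein argument of Corollary~\ref{wasserstein}. Claiming the aggregation contributes an $O(\delta)$ perturbation absorbable in the entropy dissipation would itself require a nontrivial new proof.

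Third, the statement assumes only radial symmetry, not radial monotonicity, so the appeal to preservation of radial monotonicity (Theorem~\ref{decreasing}) alone does not close the argument; the paper reduces to the radially decreasing case via the rearrangement comparison (Theorem~\ref{rearrangement} and Corollary~\ref{lp_compare}), and this step is missing from the proposal.
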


Note that both Theorem \ref{thm:subcritical} and Theorem~\ref{above101} are limited to radial solutions. The asymptotic behavior of non-radial solutions - either in the subcritical regime or in the supercritical regime in terms of the re-scaled variables - remains largely open, even for the Newtonian potential. Nevertheless it is possible to control the $L^p$-norms of non-radial solutions in terms of radial ones, as we state in the next theorem.

Let us recall that, for any nonnegative measurable function $f$ that vanishes at infinity, the {\it symmetric decreasing rearrangement } $f^*$ is given by 
\begin{equation}\label{def:rearrangement}
f^*(x):=\int_0^\infty \chi_{\{f>t\}^*}(x) dt,
\end{equation}
where $\Omega^*$ denotes the symmetric rearrangement of a measurable set $\Omega$ of finite volume in $\R^d$.

\begin{theorem}[Rearrangement comparison and instant regularization]
Suppose $m>1$. Let $V$ be given by (A) or (B). Let $d\geq 3$ and let $\rho$ be the weak solution to \eqref{pde} with initial data $\rho_0(x)\in L^1(R^d;(1+|x|^2)dx)\cap L^\infty(\R^d)$.
\begin{itemize}
\item[(a)]Let $\bar \rho$ be the solution to the symmetrized problem, i.e. $\bar \rho$ is the weak solution to \eqref{pde} with initial data $ \rho_0^*(x)$.  Assume $\bar \rho$ exists for $t\in[0,T)$. Then $\rho^*(\cdot, t) \prec \bar\rho(\cdot, t)$ and $\|\rho(\cdot, t)\|_p \leq \|\bar \rho(\cdot, t)\|_p$ for $0\leq t < T$ and $1<p\leq \infty$ (Theorem \ref{rearrangement} and Corollary \ref{lp_compare}). 
\item[(b)] Suppose $m>2-\frac{2}{d}$, then for every $0<t<1$ we have 
$$
\|\rho(\cdot,t)\|_{L^\infty(\R^d)}\leq c(m,d, A, V) t^{-\alpha},
$$ 
where $A=\int \rho_0 dx$ and $\alpha := \frac{d}{d(m-1)+2}$. (Proposition \ref{instant}).
\end{itemize} \label{rearrangement_thm_intro}
\end{theorem}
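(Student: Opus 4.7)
My plan for part (a) is to compare the two solutions through their ball-mass functions. Set
\begin{equation*}
U(r,t) := \int_{B_r(0)} \rho^*(x,t)\,dx \quad\text{and}\quad \overline U(r,t) := \int_{B_r(0)} \bar\rho(x,t)\,dx;
\end{equation*}
the target relation $\rho^*(\cdot,t) \prec \bar\rho(\cdot,t)$ is, by the definition from Section~5, $U(r,t) \leq \overline U(r,t)$ for every $r>0$ and $0 \leq t < T$. At $t=0$ the two functions coincide since $\bar\rho_0 = \rho_0^*$. By the preservation of radial monotonicity (Theorem~\ref{theorem2}(c)), $\bar\rho(\cdot,t)$ stays radially decreasing, so $\overline U$ satisfies the first-order radial evolution equation obtained by integrating \eqref{pde} over $B_r$, to which the mass comparison principle of Section~5 applies directly. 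The strategy is then to verify that $U$ is a subsolution of the same equation and invoke that principle.

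The subsolution property of $U$ is driven by two rearrangement inequalities applied at each time. The Polya--Szego / Brothers--Ziemer inequality applied to the porous-medium dissipation gives $\int|\nabla(\rho^*)^{(m+1)/2}|^2\,dx \leq \int|\nabla\rho^{(m+1)/2}|^2\,dx$, which translates into a bound on the diffusive flux across $\partial B_r$ favorable to the comparison. Since $-V$ is nonnegative and radially decreasing, Riesz's rearrangement inequality yields $\int\rho(\rho*V)\,dx \geq \int\rho^*(\rho^**V)\,dx$: symmetrization strengthens the attractive drift, so $\bar\rho$ concentrates at least as fast as $\rho^*$ does. Together these produce the differential inequality between $U$ and $\overline U$ required by the mass comparison principle, giving $U \leq \overline U$. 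The $L^p$ inequality then follows from a Hardy--Littlewood--Polya fact: for nonnegative radially decreasing $f,g$ with $\int_{B_r} f \leq \int_{B_r} g$ for every $r$, one has $\int\Phi(f)\,dx \leq \int\Phi(g)\,dx$ for every convex $\Phi$ with $\Phi(0)=0$. Applying this with $\Phi(s) = s^p$ and using equimeasurability $\|\rho\|_p = \|\rho^*\|_p$ gives $\|\rho(\cdot,t)\|_p \leq \|\bar\rho(\cdot,t)\|_p$ for $1 < p < \infty$; the $L^\infty$ case follows by letting $p\to\infty$ or from the small-$r$ asymptotics of $U$ and $\overline U$.

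For part (b), part (a) reduces the problem to bounding $\|\bar\rho(\cdot,t)\|_\infty = \bar\rho(0,t)$ for the radial solution starting from $\rho_0^*$. I would run a Moser-type iteration on $\|\bar\rho(\cdot,t)\|_p^p$: differentiating in $t$ and integrating by parts produces a negative diffusive contribution of size $c_{m,p}\|\nabla\bar\rho^{(p+m-1)/2}\|_2^2$ and an aggregation contribution $(p-1)\int\bar\rho^p(\bar\rho*\Delta V)\,dx$, which equals $(p-1)\|\bar\rho\|_{p+1}^{p+1}$ in the Newtonian case (since $\Delta \mathcal{N} = \delta_0$) and is bounded by $A\|h\|_\infty\|\bar\rho\|_p^p$ in the regularized case. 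The Sobolev embedding applied to the diffusion term, combined with interpolation against the conserved mass $\|\bar\rho\|_1 = A$, is strong enough precisely when $m > 2 - 2/d$ to absorb the aggregation contribution at each stage, and iterating through $p\to\infty$ then yields the stated $t^{-\alpha}$ bound with $\alpha = d/(d(m-1)+2)$.

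The main obstacle lies in part (a): $\rho^*(\cdot,t)$ is a priori only measurable in $t$ and solves no PDE, so turning the subsolution heuristic into a rigorous argument will require an approximation step---e.g.\ replacing $\rho_0$ by smooth, strictly positive data so that $U$ and $\overline U$ are smooth in $(r,t)$---followed by writing the two rearrangement inequalities in their time-integrated form on the approximating sequence, and finally passing to the limit. Balancing the diffusion and aggregation contributions in that integrated form so that the mass comparison principle of Section~5 can be invoked is where the bulk of the technical work will concentrate.
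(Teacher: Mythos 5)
Your proposal for part~(a) has the right starting point---compare ball-mass functions of $\rho^*$ and $\bar\rho$ and aim at the mass comparison principle---but it leaves a genuine gap at the heart of the argument. The mass comparison in Section~5 requires both competitors to satisfy a differential inequality at a fixed $(r,t)$; your plan is to derive that inequality for $U(r,t)=\int_{B_r}\rho^*$ from Polya--Szeg\H{o} applied to $\int|\nabla\rho^{(m+1)/2}|^2$ and Riesz applied to $\int\rho(\rho*V)$. Both of those are \emph{global} integral inequalities, and it is not explained how to localize them to a statement about the flux across a single sphere $\partial B_r$ at a single time. This is not a technicality that approximation smooths away: $\rho^*(\cdot,t)$ solves no PDE, so a pointwise-in-$(r,t)$ inequality does not fall out of the standard rearrangement toolbox. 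The paper resolves exactly this issue with two moves you do not have: it discretizes time (Crandall--Liggett), so that each step is an \emph{elliptic} problem $-h\Delta u^m - h\nabla\cdot(u\nabla(f*V))+u=g$ to which one can apply the level-set test functions $(u^m-t)_+$ and differentiate in $t$, converting the rearrangement inequalities into inequalities on $\{(u^*)^m>t\}$; and it \emph{freezes the drift} first (comparison for \eqref{pmedrift2} with a prescribed $f*V$, Theorem~\ref{pmedrift_conti_comp}), then passes to the self-consistent nonlocal problem by iterating $\rho_{i+1}$ driven by $\rho_i*V$. Your plan as written conflates the two and does not say how to handle the self-consistency of the drift. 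Your closing paragraph correctly identifies the difficulty, but the proposed remedy (smooth initial data, time-integrated inequalities, pass to the limit) does not by itself produce the required local differential inequality; it only recovers the global inequalities you started from. So as it stands the argument for (a) does not close.

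Your $L^p$ deduction from (a) is fine: $\rho^*$ and $\bar\rho$ have the same mass $A$, and the majorization/Hardy--Littlewood--Polya fact gives $\int\Phi(\rho^*)\le\int\Phi(\bar\rho)$ for convex $\Phi(0)=0$; combined with equimeasurability $\|\rho\|_p=\|\rho^*\|_p$ this is exactly the claim, and the $L^\infty$ case follows by limiting or by evaluating $\bar\rho(0,t)$ against the ball-mass bound.

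Part~(b) is a legitimately different route from the paper. You propose a Moser iteration on $\|\bar\rho(\cdot,t)\|_p^p$, absorbing the aggregation term into the diffusion via Sobolev in the subcritical range; this is in the same spirit as the De Giorgi-type result of \cite{pv} cited in the paper and, with care, would plausibly yield the stated decay with the correct exponent $\alpha=d/(d(m-1)+2)$ depending only on $A$. The paper's own proof of Proposition~\ref{instant} is considerably shorter once part~(a) is in hand: after reducing to the radial decreasing case by Corollary~\ref{lp_compare}, it compares $\rho$ with the explicit shrinking supersolution $\eta(r,t)=k^d(t)\rho_A(k(t)r)$ with $k(0)=a^{-1}$ large and $k$ governed by the ODE $k'=c\,k^{d+1}(1-k^{d(m-2+2/d)})$, and reads off $\|\rho(\cdot,t)\|_\infty\le\|\eta(\cdot,t)\|_\infty=c\,k^d(t)$, which solves the scalar ODE that gives the $t^{-\alpha}$ bound directly. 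The explicit-supersolution argument buys you a transparent constant and avoids the iteration machinery; the Moser route buys independence from part~(a), at the cost of a more delicate bookkeeping of the aggregation term through the $p\to\infty$ limit.
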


Rearrangement results have been obtained before for \eqref{pme} (Chapter 10 of \cite{v}) and for the two-dimensional Keller-Segel model (\cite{dnr}). We largely follow the arguments in \cite{v}. The new component in the proof is the introduction of approximate equations to deal with both the degenerate diffusion and the nonlocal aggregation term.  The $L^\infty$-regularization result is interesting on its own: similar results have been recently obtained for Keller-Segel model in \cite{pv}, by a De-Giorgi type method.

\section{Properties of the radially symmetric stationary solution}
In this section we consider non-negative radially symmetric stationary solutions of (\ref{pde}), given by
\begin{equation}
\frac{m}{m-1} \rho^{m-1} + \rho*V = C \quad\hbox{ in } \{\rho>0\}, \label{stat_sol}
\end{equation}
where we assume $m>2-\frac{2}{d}$, and the constant $C$ may be different in different positive components of $\rho$.  When $V$ is given by (A) or (B), for any mass $A>0$, the existence of a stationary solution $\rho$ with mass $A$ is proven in \cite{l} and \cite{b2}. 

Let us define the {\it mass function} as follows:
 $$
 M(r):=\int_{B(0,r)}\rho(x)dx.
 $$
Since both $\rho$ and $V$ are radially symmetric, we may slightly abuse the notation and write $\rho*V$ as a function of $r$. When $V=\mathcal{N}$, by the divergence theorem and radial symmetry of $\rho$ and $V$ we have
\begin{equation}
\frac{\partial}{\partial r} (\rho*V)(r) = \frac{M(r)}{\sigma_d r^{d-1}}.  \label{nabla_rho*V}
\end{equation}
where $\sigma_d$ is the surface area of the sphere $\mathbb{S}^{d-1}$ in $\mathbb{R}^d$. Similarly, when $V$ is given by (B), for all radially symmetric function $\rho$, we have that  $\rho*V$ is radially symmetric, and
\begin{equation}
\frac{\partial}{\partial r}  (\rho*V)(r) = \frac{\tilde M(r)}{\sigma_d r^{d-1}} , \label{nabla_rho*V_2}
\end{equation}
where  $\tilde M(r):=\int_{B(0,r)}\rho*\Delta V dx.$
Note that in both cases, we have $\partial_r (\rho*V) \geq 0$.

\begin{proposition} Let $V$ given by (A) or (B) and suppose $m> 2-\frac{2}{d}.$ Then there exists a radially symmetric , nonnegative solution $\rho\in L^1(\mathbb{R}^d)$ of \eqref{stat_sol}. Moreover,
(a) $\rho$ is smooth in its positive set;
(b) $\rho$ is  radially decreasing; and
(c) $\rho$ is compactly supported.
\label{stat_sol_regularity_decreasing}
\end{proposition}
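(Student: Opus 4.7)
The plan is to take existence of $\rho$ for granted (it is cited from \cite{l,b2}) and to extract (a)--(c) from the stationary identity \eqref{stat_sol} combined with the sign information $\partial_r(\rho*V)\geq 0$ recorded in \eqref{nabla_rho*V}--\eqref{nabla_rho*V_2}. Differentiating \eqref{stat_sol} in $r$ on any connected component of $\{\rho>0\}$ yields
$$
m\rho^{m-2}\partial_r\rho \;=\; -\partial_r(\rho*V)\;\leq\; 0,
$$
so $\rho$ is non-increasing on every such component. To upgrade this to the global statement (b) it suffices to show that $\{\rho>0\}$ is a single ball centered at the origin: if it contained an annular piece $\{a<|x|<b\}$ with $a>0$, then $\rho(a^+)=0$ by continuity, and the component-wise monotonicity would force $\rho\leq 0$ on the entire annulus, contradicting positivity there. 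The same dichotomy rules out any disconnected positive components, so $\{\rho>0\}=B(0,R)$ for some $R\in(0,\infty]$.

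For (c) the task is to exclude $R=\infty$ via a decay-vs-integrability argument. If $\{\rho>0\}=\R^d$, then a single constant $C$ governs \eqref{stat_sol} everywhere; radial monotonicity together with $\rho\in L^1$ forces $\rho(r)\to 0$ at infinity, while $\rho*V(x)\to 0$ as well ($\rho*V$ is the Newtonian potential of $\rho$ in case (A), and of $\rho*h$ in case (B)), so $C=0$. The equation then reads $\rho^{m-1}\sim -\tfrac{m-1}{m}\rho*V$ at infinity; since $-\rho*V(x)\asymp |x|^{-(d-2)}$, this gives $\rho(x)\asymp |x|^{-(d-2)/(m-1)}$. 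Requiring this tail to lie in $L^1(\R^d)$ forces $m<2-2/d$, contradicting the subcritical hypothesis $m>2-2/d$.

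Statement (a) is a routine bootstrap. On $B(0,R)$ the equation can be solved as $\rho=\bigl(\tfrac{m-1}{m}\bigr)^{1/(m-1)}(C-\rho*V)^{1/(m-1)}$, and the right-hand side is a smooth function of $\rho*V$ wherever $C-\rho*V>0$, which holds throughout the positive set. Starting from $\rho\in L^1\cap L^\infty$ and using $\Delta(\rho*V)=\rho$ in case (A) or $\Delta(\rho*V)=\rho*h$ in case (B), standard interior $W^{2,p}$-elliptic estimates place $\rho*V$ in $C^{1,\alpha}_{\mathrm{loc}}$, hence $\rho\in C^{1,\alpha}$ in the positive set; feeding this back into the Laplace equation for $\rho*V$ and iterating produces $C^\infty$ regularity in $B(0,R)$.

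The main obstacle I anticipate is the ``hollow-solution'' issue in the first paragraph: because \eqref{stat_sol} permits distinct constants $C$ on distinct positive components, one cannot directly compare values across components via a maximum principle. The saving observation is that no cross-component comparison is actually needed---the component-wise monotonicity is already self-contradictory at the inner boundary of any putative annular piece, which is enough to force $\{\rho>0\}$ to be a ball around the origin.
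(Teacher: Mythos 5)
Your proposal is correct and follows essentially the same route as the paper: existence is imported from Lions, (b) comes from differentiating the stationary identity and using $\partial_r(\rho*V)\geq 0$, and (c) is the decay-vs-integrability contradiction at $C=0$. The differences are minor. For (a) you bootstrap through the elliptic equation $\Delta(\rho*V)=\rho$ (resp.\ $\rho*h$) using $W^{2,p}$/Schauder estimates, whereas the paper bootstraps through the one-dimensional radial identity $\partial_r(\rho*V)=M(r)/(\sigma_d r^{d-1})$; both mechanisms implement the same idea (regularity of $\rho*V$ feeds back into regularity of $\rho$ via the constitutive relation $\rho=(\tfrac{m-1}{m}(C-\rho*V))^{1/(m-1)}$), and both face the same mild bookkeeping in case (B) where $\rho*h$ inherits regularity only from the convolvents. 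For (b) you spell out the ``no hollow components'' argument---that an annular component would force $\rho(a^+)=0$ and then $\rho\leq0$ throughout the annulus by componentwise monotonicity---which the paper compresses into the one-line remark that ``(b) yields that $\rho$ has simply connected support''; making this explicit is a genuine clarification, since the constant $C$ in \eqref{stat_sol} is allowed to vary between components and a naive comparison across components is unavailable. For (c) you organize the argument as ``unbounded support $\Rightarrow C=0$ $\Rightarrow$ non-integrable tail'' while the paper trichotomizes on the sign of $C$; these are logically equivalent. Nothing is missing.
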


\begin{proof}
1. Existence of the stationary solution $\rho$ follows from \cite{l}: the proof is given in the appendix.

2. To show (a) for $V=\mathcal{N}$, note that the right hand side of \eqref{nabla_rho*V} is continuous since 
$
f(r):=\frac{M(r)}{\sigma_d r^{d-1}}
$
 is continuous for all $r>0$,  and $f(r)\to 0$ as $r\to 0$.
By \eqref{nabla_rho*V}, $\rho*V$ is differentiable in the positive set of $\rho$, which implies that $\rho^{m-1}$ (hence $\rho$) is also differentiable in the positive set of $\rho$.  Therefore $\frac{M(r)}{r^{d-1}}$ is now twice differentiable, hence we can repeat this argument and conclude. When $V$ is given by (B), we can apply the same argument on \eqref{nabla_rho*V_2} and conclude.

3.  By differentiating \eqref{stat_sol} we have
\begin{equation}
\frac{m}{m-1} \frac{\partial}{\partial r} \rho^{m-1} =  -\frac{\partial}{\partial r} (\rho*V) \quad\hbox{ in } \{\rho>0\}. \label{stat_sol_diff}
\end{equation}
Due to  \eqref{nabla_rho*V}-\eqref{nabla_rho*V_2} the right hand side of \eqref{stat_sol_diff} is negative, and thus we conclude (b).

4. It remains to check (c). Note that (b) yields that $\rho$ has simply connected support. Hence  \eqref{stat_sol}  yields
$$
\rho(r) = (C-\rho*V(r))^{\frac{1}{m-1}}.
$$
 When $V=\mathcal{N}$ the proof is similar to that of Theorem 5 in \cite{ly}:  since $\rho*V$ vanishes at infinity, we have

\begin{equation}\label{formular1}
 \rho*V(r) = -\int_r^\infty \frac{M(s)}{s^{d-1}} ds = -\frac{M(r)}{(d-2) r^{d-2}} - \int_r^\infty \frac{c_d}{d-2} \rho(s) s ds, 
\end{equation}
where $c_d$ is the volume of a ball with radius $1$ in $\mathbb{R}^d$.  Note that
\begin{equation}\label{observation1}
\rho*V(r)\leq 0\quad\hbox{ and }\quad -\rho*V(r) \sim \frac{1}{r^{d-2}}\hbox{ as } r\to\infty.
\end{equation}
If $C=0$, \eqref{observation1} implies that 
$$
\rho(r) = (-\rho*V(r))^{\frac{1}{m-1}} \sim r^{-\frac{d-2}{m-1}},
$$ where the exponent is greater than $-d$ when $m>2-\frac{2}{d}$, which contradicts the finite mass property of $\rho$. Therefore $C$ must be negative and thus $\rho(r)$ needs to touch zero for some $r$.

When $V=\mathcal{N}*h$,  we have 
$\rho*V = (\rho*\mathcal{N})*h.$   Since  $h\in L^1(\R^d)$ and is radially decreasing, using \eqref{formular1} we have $\rho*V(x) \sim  \frac{1}{|x|^{d-2}}$ as $|x|\to \infty$ as well, hence by same argument as above, we can conclude.
\end{proof}

Next we state the uniqueness of the radial stationary solution when $V=\mathcal{N}$.

\begin{theorem}[\cite{ly}] \label{prop_uniqueness}
Let $V=\mathcal{N}$, and suppose $m>2-\frac{2}{d}$. Then for all choices of mass $A>0$, the radial stationary solution for \eqref{pde} with mass $A$ is unique.  Moreover, the stationary solution is the global minimizer for the free energy functional \eqref{energy}.
\end{theorem}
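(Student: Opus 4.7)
The plan is to follow the strategy of Lieb and Yau~\cite{ly}, whose argument rests on reducing the radial stationary equation to an ODE with uniqueness properties and then exploiting the scale invariance of the problem.

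Since $\Delta \mathcal{N} = \delta_0$, applying the Laplacian to~\eqref{stat_sol} on the positive set (where Proposition~\ref{stat_sol_regularity_decreasing} ensures smoothness) eliminates the nonlocal term and yields the local semilinear elliptic equation
\begin{equation*}
\Delta\Big(\tfrac{m}{m-1}\rho^{m-1}\Big) + \rho = 0 \quad \text{in } \{\rho > 0\}.
\end{equation*}
The substitution $u := \tfrac{m}{m-1}\rho^{m-1}$ turns this into a Lane-Emden equation $\Delta u + \kappa\, u^{1/(m-1)} = 0$ with $\kappa := \bigl(\tfrac{m-1}{m}\bigr)^{1/(m-1)}$, posed on the support ball $B_R(0)$ with Dirichlet data $u = 0$ on $\partial B_R$. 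In radial coordinates, prescribing the central value $u(0) = u_0 > 0$ together with the regularity condition $u'(0) = 0$ uniquely determines the ODE solution on the interval where it stays positive, hence determines the support radius $R = R(u_0)$ and the total mass $A = A(u_0)$.

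The next step is to exploit the scale invariance of the stationary equation: the transformation $\rho(x) \mapsto \mu\,\rho(\lambda x)$ with $\mu^{m-2} = \lambda^{-2}$ maps radial stationary solutions to radial stationary solutions, sending the central value $\rho(0) \mapsto \mu\,\rho(0)$ and the mass $A \mapsto \mu^{(2+d(m-2))/2}\,A$. Since the exponent $(2+d(m-2))/2$ is strictly positive precisely when $m > 2 - 2/d$, this shows (for $m \neq 2$) that $A(u_0)$ is a strictly increasing power function of $u_0$. The borderline case $m = 2$ is treated separately: the reduced equation becomes the linear Helmholtz problem $\Delta \rho + \rho/2 = 0$ on $B_R$ with Dirichlet data, so $R$ is forced to be the unique value making $1/2$ the first Dirichlet eigenvalue of $-\Delta$ on $B_R$, and $\rho$ is a positive multiple of the first eigenfunction normalized by the mass. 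In every subcritical case the map from the central value to the mass is strictly monotone onto $(0,\infty)$, giving uniqueness of the radial stationary solution for each prescribed mass.

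For the global minimizer claim, one first shows via the Hardy-Littlewood-Sobolev inequality and interpolation against the $L^1$ mass constraint that $\mathcal{F}$ is bounded below and coercive on $\{\int\rho = A\}$ in the subcritical regime, so the direct method produces a minimizer. Because $-V = c_d/|x|^{d-2}$ is already radially symmetric and decreasing, Riesz's rearrangement inequality gives $\int \rho(\rho*V) \geq \int \rho^*(\rho^**V)$ while $\int \rho^m$ is rearrangement invariant, forcing any minimizer to be radially decreasing up to translation. The Euler-Lagrange equation for such a minimizer is precisely~\eqref{stat_sol}, so by the uniqueness just established the minimizer coincides with $\rho_A$. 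The main technical obstacle I anticipate is justifying ODE uniqueness near the free boundary where the nonlinearity $u^{1/(m-1)}$ fails to be Lipschitz; this is bypassed by working instead with the mass function $M(r) := \int_{B_r}\rho$, which satisfies a cleaner ODE whose unique solution is pinned down by the leading-order behavior $M(r) \sim \tfrac{\sigma_d}{d}\rho(0)\,r^d$ near the origin.
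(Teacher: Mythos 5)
Your proposal is mathematically sound, but it inverts the logical organization that the paper attributes to Lieb--Yau. The paper's citation of \cite{ly} describes a two-step route: first show that the global minimizer of $\mathcal{F}$ at a given mass is unique, and second show that every radial stationary solution is a global minimizer for its mass, so that uniqueness of radial stationary solutions falls out as a corollary. You run it the other way: you establish uniqueness of radial stationary solutions \emph{directly} by applying $\Delta$ to \eqref{stat_sol} to get the local Lane--Emden equation, invoking ODE well-posedness (with the mass-function formulation to sidestep the non-Lipschitz nonlinearity at the free boundary — exactly the device the paper itself highlights when discussing why (B) resists this approach), and then exploiting the scaling $\rho\mapsto\mu\rho(\lambda\cdot)$ with $\mu^{m-2}=\lambda^{-2}$ to show that $\rho(0)\mapsto A$ is a strictly increasing power law for $m\neq 2$ (with the first-Dirichlet-eigenvalue argument handling $m=2$). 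Only afterwards do you identify the minimizer with the stationary solution via the direct method, Riesz rearrangement, and the Euler--Lagrange equation. The underlying ingredients are the same as in \cite{ly}, but your ordering is more elementary for the uniqueness half, since it never needs a uniqueness-of-minimizers argument (which in Lieb--Yau requires a separate convexity/scaling step). Two places you gloss over that deserve a flag: the direct-method existence of a minimizer on $\{\int\rho=A\}$ is not immediate because the problem is translation-invariant — the paper's appendix invokes Lions' concentration-compactness rather than naive coercivity+HLS; and to conclude the minimizer is exactly a translate of a radial decreasing profile, you need the \emph{strict} equality case of Riesz's inequality for the Newtonian kernel, not just the weak inequality. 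Both are standard, but they are the points where the argument as written is not self-contained.
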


This theorem follows from a minor modification from the proof of Theorem 5 in \cite{ly}, which proves uniqueness of the stationary solution of a slightly different problem.  Their proof consists of two steps: firstly for a given mass, they first show the global minimizer of  \eqref{energy} is unique, and secondly they prove every radial stationary solution is a global minimizer for some mass.  Theorem \ref{prop_uniqueness} and the homogeneity of $\mathcal{N}$ yields the following:
\begin{corollary} \label{stat_sol_scaling}
Let $V$ and $m$ be as in Theorem~\ref{prop_uniqueness}, and let $\rho_M$ be the radial solution of \eqref{stat_sol} with mass $M$. Then
\begin{equation}
\rho_M(x) = a \rho_1(a^{-\frac{m-2}{2}}x)\hbox{ with } a:= M^{\tfrac{2}{d(m-2+2/d))}}.
\end{equation}
In particular if $ A<B$ then $\max\rho_B \geq \max\rho_A$ and the following dichotomy of behavior is observed.: (see Figure 1). 
\begin{itemize}
\item[(a)] When $m\geq 2$, $\{\rho_A>0\}\subseteq\{\rho_B>0\}$.
\item[(b)] When $2-\frac{2}{d} < m \leq 2$,  $\{\rho_B>0\}\subseteq\{\rho_A>0\}$. 
\end{itemize}
\end{corollary}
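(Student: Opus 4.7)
The plan is to exploit the scale invariance of the Newtonian potential combined with the uniqueness result of Theorem~\ref{prop_uniqueness}. Writing $\rho_M(x)=a\,\rho_1(bx)$ for two free parameters $a,b>0$, we can directly test when this Ansatz is a stationary solution of \eqref{stat_sol}. A short change of variables gives
$$
\int \rho_M\,dx = ab^{-d},\qquad \rho_M^{m-1}(x)=a^{m-1}\rho_1^{m-1}(bx),
$$
and, using $V(x-z/b)=b^{d-2}V(bx-z)$ from the homogeneity of $\mathcal{N}$,
$$
(\rho_M*V)(x)=ab^{-2}(\rho_1*V)(bx).
$$
Substituting into \eqref{stat_sol} and requiring the two $x$-dependent terms to carry the same prefactor forces $a^{m-1}=ab^{-2}$, i.e.\ $b=a^{-(m-2)/2}$. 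With this choice the equation for $\rho_1$ is recovered after dividing by $a^{m-1}$, so the Ansatz does produce a radial stationary solution of mass $ab^{-d}=a^{1+d(m-2)/2}$. Setting this equal to $M$ and solving for $a$ yields $a=M^{2/(d(m-2+2/d))}$, which is well-defined and positive precisely because $m>2-2/d$ makes the exponent finite. Theorem~\ref{prop_uniqueness} then identifies this Ansatz with $\rho_M$, establishing the scaling formula.

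From the formula the remaining conclusions are immediate. Since $\max\rho_M=a\max\rho_1$ and $a$ is strictly increasing in $M$ (again because $m>2-2/d$), $A<B$ implies $\max\rho_A\le\max\rho_B$. For the supports, if $\rho_1$ is supported in $\overline{B(0,R_1)}$ (compact support being guaranteed by Proposition~\ref{stat_sol_regularity_decreasing}), then $\rho_M$ is supported in $\overline{B(0,R_1 a^{(m-2)/2})}$, so the support radius is monotone in $a$ (hence in $M$) with sign determined by $(m-2)/2$. This gives the two cases: for $m\ge 2$ the support radius grows with $M$, so $\{\rho_A>0\}\subseteq\{\rho_B>0\}$; for $2-2/d<m\le 2$ it shrinks with $M$, so $\{\rho_B>0\}\subseteq\{\rho_A>0\}$.

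There is no real obstacle; the only points that need care are verifying the homogeneity identity $V(x-z/b)=b^{d-2}V(bx-z)$ for $V=\mathcal{N}$ and tracking the exponent $1+d(m-2)/2=d(m-2+2/d)/2$ so that the stated value of $a$ comes out correctly. After that, the dichotomy is just reading off the sign of $(m-2)/2$ from the explicit formula.
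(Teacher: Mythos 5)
Your proof is correct, and it follows exactly the approach the paper indicates — the paper gives no explicit proof of this corollary, only the remark that it follows from Theorem~\ref{prop_uniqueness} together with the homogeneity of $\mathcal{N}$, and your argument fills in precisely those details: the two-parameter scaling Ansatz, the $(2-d)$-homogeneity identity for the convolution, matching exponents to fix $b=a^{-(m-2)/2}$, and then solving $ab^{-d}=M$ for $a$ before invoking uniqueness to identify the rescaled profile with $\rho_M$. The sign analysis of $(m-2)/2$ for the support dichotomy is likewise the intended reading.
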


\begin{figure}
\begin{center}
\epsfbox{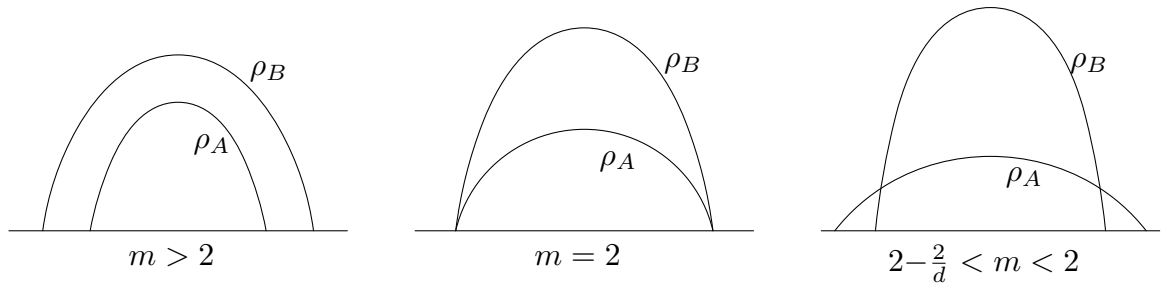}
\end{center}
\vspace{-0.3cm}
\caption{Stationary solutions with different mass for different $m$, where $\int {\rho_A}dx <\int {\rho_B}dx$.}
\end{figure}

We point out that the proof in \cite{ly} cannot be generalized when $V$ is given by (B):  the difficulty  lies in the second step.  When $V=\mathcal{N}$, for any radial stationary solution $\rho$, its mass function $M(r):=\int_{|x|\leq r} \rho(x) dx$ solves a second order ODE
$$
\Big(\frac{m}{m-1} \big(\frac{M'(r)}{\sigma_d r^{d-1}}\big)^{m-1} \Big)'= \frac{M(r)}{\sigma_d r^{d-1}},
$$
where $M(0)=0$ is prescribed. It follows that  $M(r)$ is unique for a given $\rho(0) = \lim_{r\to 0} M'(r)/(\sigma_d r^{d-1})$, which implies that $\rho$ can be uniquely determined by $\rho(0)$. This property then allows both the radial stationary solutions and the global minimizers to be parametrized by their values at the center of mass (see Lemma 12, \cite{ly}).

When $V$ is given by (B), $M(r)$ solves a nonlocal ODE, hence different stationary solutions may have the same center density: thus the above argument in \cite{ly} cannot be applied to prove the second step, necessitating  an alternative approach. Instead of dealing with the stationary equation \eqref{stat_sol} directly, we will consider the dynamic equation \eqref{pde} and prove the uniqueness of the radial stationary solution by their asymptotic convergence. Indeed the following theorem is one of the main results in our paper.

\begin{theorem}\label{uniqueness2}[Corollary~\ref{cor:uniqueness}]
Let $V$ be given by (B), and suppose $m>2-\frac{2}{d}$. Then for any $A>0$, the radial stationary solution of \eqref{pde} with mass $A$ is unique.
\end{theorem}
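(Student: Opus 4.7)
The strategy is to leverage the asymptotic convergence result for the dynamic equation (Theorem~\ref{thm:subcritical}(b)) to convert a potential non-uniqueness of stationary profiles into a statement about identical large-time limits, which forces equality. The crucial feature of Theorem~\ref{thm:subcritical}(b) as stated is its asymmetry: the time-dependent solution $\rho$ is unique (by Theorem~1.1), while the target $\rho_A$ is only required to be \emph{some} radial stationary solution of mass $A$. This ``every stationary profile of mass $A$ is an attractor'' formulation is exactly what will power the uniqueness argument.

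Concretely, suppose $\rho^{(1)}$ and $\rho^{(2)}$ are two nonnegative radial stationary solutions of \eqref{stat_sol} with $\int \rho^{(i)}\,dx = A$. By Proposition~\ref{stat_sol_regularity_decreasing}, each $\rho^{(i)}$ is radially decreasing, compactly supported, smooth in its positive set, and bounded, so it lies in $L^1(\R^d;(1+|x|^2)dx)\cap L^\infty(\R^d)$ and qualifies as initial data. Run \eqref{pde} from $\rho_0 := \rho^{(1)}$; since $\rho^{(1)}$ satisfies \eqref{stat_sol}, the constant-in-time function $(x,t)\mapsto \rho^{(1)}(x)$ is a weak solution of \eqref{pde}, and by the uniqueness clause of Theorem~1.1 it is \emph{the} weak solution. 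Applying Theorem~\ref{thm:subcritical}(b) to this solution with the (perfectly legal) choice of target $\rho_A := \rho^{(2)}$ gives
\begin{equation*}
\|\rho^{(1)}-\rho^{(2)}\|_{L^\infty(\R^d)} \;=\; \lim_{t\to\infty}\|\rho(\cdot,t)-\rho^{(2)}\|_{L^\infty(\R^d)} \;=\; 0,
\end{equation*}
so $\rho^{(1)}\equiv \rho^{(2)}$, proving uniqueness.

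Thus, granting the preceding results, Theorem~\ref{uniqueness2} is effectively a one-line corollary of Theorem~\ref{thm:subcritical}(b): the uniqueness is squeezed out for free once one realizes that the convergence target was never tied to a specific stationary profile. The real obstacle was displaced backwards — one must establish Theorem~\ref{thm:subcritical}(b) without tacitly assuming uniqueness of $\rho_A$, which is handled via the mass-comparison principle (Proposition~5.3) and the finite-propagation bounds of the earlier sections. At this stage I take that convergence theorem as given, consistent with the paper's internal dependency structure, and the argument above then closes the uniqueness question for the potential (B) case.
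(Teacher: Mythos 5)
Your proposal is correct and follows the same overall strategy as the paper: exploit the fact that the convergence result (Theorem 1.4(b) / Corollary~\ref{cor:convergence}) is established for \emph{any} radial stationary profile $\rho_A$ of mass $A$ without presupposing uniqueness, and let the uniqueness of the dynamic trajectory force the targets to coincide. The only cosmetic difference is the choice of initial data: the paper runs the dynamics from an arbitrary admissible $\rho_0$ and observes that the single trajectory must converge to both $\rho_A^1$ and $\rho_A^2$, whereas you take $\rho_0 := \rho^{(1)}$ so that the trajectory is constant in time and the convergence to $\rho^{(2)}$ immediately yields $\rho^{(1)} \equiv \rho^{(2)}$. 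Both are one-line consequences of the same underlying observation, and your version correctly checks that $\rho^{(1)}$ qualifies as admissible initial data (continuity, compact support, radial symmetry) via Proposition~\ref{stat_sol_regularity_decreasing} and that the constant-in-time extension is the unique weak solution.
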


\section{Qualitative properties of solutions}
 In this section several regularity properties, including the finite propagation property, will be derived for general weak solutions of \eqref{pde}. We point out that the results in this section hold for general (non-radial) solutions.
  
\begin{theorem} \label{unif_cont}
Suppose $m>1$. Let $V$ be given by (A) or (B), and let $\rho$ be a weak solution of (1.1) with its initial data $\rho_0$ bounded with compact support. Further suppose $\rho$ is uniformly bounded in $\mathbb{R}^d \times [0,T]$. Then
\begin{itemize}
\item[(a)] For any $\delta>0$,  $\rho$ is uniformly continuous in $\R^d\times (\delta,T]$.
\item[(b)] [Finite propagation property] Suppose $\{x:\rho(\cdot,t)>0\} \subset B_R(0)$. Then
 $$
 \{x: \rho(\cdot,t+h)>0\}\subset B_{R+Ch^{1/2}}(0) \hbox{ for } 0<h<1,
 $$
 where the constant $C>0$ depends on $m$, $d$, $\rho_0$ and $\|\Delta V\|_1$.
\end{itemize}
\end{theorem}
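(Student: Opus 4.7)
The plan is to reduce \eqref{pde} to a quasilinear degenerate parabolic equation of porous-medium type with bounded lower-order terms, then invoke the classical PME regularity theory for part (a) and build an explicit Barenblatt-type barrier for part (b). Concretely, I would expand the divergence in the aggregation term to write the equation as
\begin{equation*}
\rho_t = \Delta\rho^m + b\cdot\nabla\rho + c\,\rho,\qquad b:=\nabla(\rho*V),\quad c:=\rho*\Delta V,
\end{equation*}
and establish a priori bounds $\|b\|_{L^\infty(\R^d\times[0,T])}\leq K$ and $\|c\|_{L^\infty(\R^d\times[0,T])}\leq L$ with $K,L$ depending only on $m,d$, the initial data, $\|\Delta V\|_1$, and the given $L^\infty$-bound on $\rho$. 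For type (B), $c=\rho*h$ is bounded by Young's inequality and $\nabla V=\nabla\mathcal{N}*h$ by the standard near/far splitting of $\nabla\mathcal{N}$; for type (A), $c=\pm\rho$ is immediate while $b=\rho*\nabla\mathcal{N}$ is controlled by the same near/far splitting applied to $\rho$, using $\|\rho\|_\infty$ on the near part $|y-x|\leq 1$ and $\|\rho\|_1$ on the far part.

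For part (a), with $b,c$ uniformly bounded the reformulated equation is a standard PME with bounded drift and bounded zeroth-order reaction, so uniform continuity on $\R^d\times[\delta,T]$ for each $\delta>0$ follows from the classical H\"older continuity theory for degenerate parabolic equations with bounded lower-order perturbations (DiBenedetto, Sacks; see Ch.~7--8 of V\'azquez's PME book). The time lag $\delta$ absorbs the potential initial discontinuity inherent to merely $L^\infty$ data.

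For part (b), I would construct a radial supersolution on $[t, t+h]$ of the form
\begin{equation*}
\bar\rho(x,s) = e^{Ls}\bigl[\alpha(s) - \beta(s)(|x|-R)_+^2\bigr]_+^{1/(m-1)},
\end{equation*}
with $\alpha(s), \beta(s)$ tuned so that $\alpha(0)\geq\|\rho\|_\infty^{m-1}$ (making $\bar\rho(\cdot,0)\geq\rho(\cdot,t)$ pointwise, since $\rho(\cdot,t)$ is supported in $B_R$) and $\alpha(h)/\beta(h)\leq C^2 h$ (forcing the support of $\bar\rho(\cdot,h)$ into $B_{R+C\sqrt{h}}$). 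The exponential prefactor absorbs the reaction term $c\rho$ via $L\geq\|c\|_\infty$; the drift $b\cdot\nabla\bar\rho$, being a perturbation on the radially monotone outer edge of $\bar\rho$, is absorbed into the constant $C$ using $\|b\|_\infty\leq K$. Substituting this ansatz into the reformulated equation reduces the supersolution inequality to a pair of ODEs on $\alpha(s),\beta(s)$ whose consistency yields the claimed $C=C(m,d,\rho_0,\|\Delta V\|_1)$. The construction mirrors the classical PME finite-propagation barrier in V\'azquez's book, with the drift and reaction treated as perturbations.

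The main obstacle is that the classical parabolic comparison principle does not apply to \eqref{pde} directly, since $b$ and $c$ are functionals of $\rho$ itself. I would resolve this by \emph{freezing} $b$ and $c$ as the given bounded coefficients coming from the fixed solution $\rho$, and comparing $\rho$ with $\bar\rho$ in the resulting fixed-coefficient linear-drift PME, where standard comparison applies after a vanishing-viscosity approximation on the degenerate diffusion. Passing to the limit yields $\rho(x, t+h)=0$ for $|x|>R+C\sqrt{h}$, which is the desired finite propagation bound.
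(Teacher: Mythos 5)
Your reduction in part (a) is exactly the paper's: freeze $\Phi := \rho*V$, note that $\rho$ then solves a PME with a priori bounded drift $\nabla\Phi$ and bounded reaction $\Delta\Phi$ (your $b$ and $c$), and invoke DiBenedetto's interior-continuity theorem. That step is fine. The paper also reduces part (b) to a comparison with an explicit barrier for the frozen-drift equation, citing a Bertsch--Hilhorst comparison principle for weak sub/supersolutions rather than your vanishing-viscosity route; both are plausible, so that difference is minor.

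The genuine gap is in the barrier construction. You propose
\[
\bar\rho(x,s) = e^{Ls}\bigl[\alpha(s) - \beta(s)(|x|-R)_+^2\bigr]_+^{1/(m-1)},
\]
a Barenblatt-type parabolic cap, and claim the drift is ``absorbed into the constant $C$'' because $\|b\|_\infty$ is bounded, with the supersolution inequality reducing to a pair of ODEs on $(\alpha,\beta)$. Pass to the pressure $u=\tfrac{m}{m-1}\rho^{m-1}$, which solves
\[
u_t = (m-1)u\Delta u + |\nabla u|^2 + b\cdot\nabla u + (m-1)c\,u.
\]
For your barrier, writing $w=(|x|-R)_+$, one has $|\nabla\bar u|\propto \beta w$, so the drift contributes a term $b\cdot\nabla\bar u \lesssim C_1\beta w$ that is \emph{linear} in $w$. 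But $\bar u_t$ contributes only $w^0$ (from $\alpha'$) and $w^2$ (from $\beta'$); there is no $w^1$ source on the left to balance $C_1\beta w$. The only candidate is the negative curvature term $(m-1)\bar u\Delta\bar u$, whose $w^1$ contribution scales like $(d-1)/r$ and vanishes both as the free boundary recedes from the ball $B_R$ and near the free boundary where $\bar u\to 0$, so it cannot uniformly dominate $C_1\beta w$. Matching coefficients therefore forces $\beta\leq 0$, a contradiction. In short, the parabolic cap which works for pure PME (where $b\equiv 0$) does not extend to the drift case as written; the pair $(\alpha,\beta)$ of ODEs does not close.

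The paper's barrier circumvents exactly this: it is a \emph{cone} in the pressure variable (an inf-convolution of a linear profile), so $|\nabla\bar u|$ is constant in $w$. The drift then contributes a $w$-independent term absorbed by $\alpha'$, and the free-boundary condition only requires outrunning a constant speed $|\nabla\bar u|+C_1$, which the inf-convolution's shrinking radius supplies. Your construction can be repaired either by replacing the quadratic cap with a linear-in-$w$ (cone) profile, or by introducing a linear term or a moving center $R(s)$, but as stated the ansatz is insufficient. The remaining pieces of your part (b) — initial domination via $\alpha(0)^{1/(m-1)}\geq\|\rho\|_\infty$, the target $\alpha(h)/\beta(h)\leq C^2h$, and the comparison step — are in the right spirit.
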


\begin{proof}
1. Let us consider the case $V=\mathcal{N} $. This is the most singular case and  parallel (and easier) arguments hold for $V$ given by (B).
 Let
$$
C_0 = \sup\{\rho(x,t): (x,t) \in \R^n\times [0,T)\}.
$$
 Observe that treating the convolution term $\Phi:=V*\rho$ as \emph{a priori} given, $\rho$ solves
\begin{equation}\label{fixed_drift}
\rho_t = \Delta (\rho^m) +\nabla\cdot (\rho \nabla \Phi).
\end{equation}
Also,  for all $t\in[0,T)$, $\Phi$ satisfies
\begin{equation}\label{bd1}
| \nabla \Phi|(\cdot,t)  \leq  C_0\int_{|y|\leq 1} | \nabla\mathcal{N}| (y) dy + \|\rho(\cdot,t)\|_1\sup_{|y|\geq 1} |\nabla\mathcal{N}(y)| \leq C_1,
\end{equation}
where $C_1$ depends on $C_0$, the $L^1$ and sup-norm of $\rho$, and the dimension $d$.
Also
\begin{equation}\label{bd2}
|\Delta\Phi|(\cdot ,t) \leq \|\rho\|_{L^{\infty}} \leq C_0 \quad \text{for all }t\in[0,T).
\end{equation}
The bounds \eqref{bd1}-\eqref{bd2} and Theorem 6.1 of \cite{dib}  yields the uniform continuity of $\rho$  in $\mathbb{R}^d\times [\delta,T)$.

2. Next we prove (b). First of all let us point out that the standard comparison principle holds between weak sub- and supersolutions of \eqref{fixed_drift}. For the case of time-independent potential $\Phi(x,t)\equiv \Phi(x)$, Proposition 3.4 of \cite{bh} asserts that the comparison principle between weak sub- and supersolution of \eqref{fixed_drift} holds if the potential function $\Phi$ is independent of the time variable and $|\nabla\Phi|, |\Delta\Phi| \leq C$. The proof in \cite{bh} is based on an approximation of the original problem \eqref{fixed_drift} by a sequence of regularized problems which satisfy the comparison principle (see sections 4, 7, 8 of \cite{bh}). This argument straightforwardly extends to our (time-dependent potential) case, and one can verify that comparison principle between weak sub- and supersolution of \eqref{fixed_drift} holds.

We will now construct a supersolution of \eqref{fixed_drift} to compare with $\rho$ over a small time period to prove the finite propagation property. First observe that the pressure function defined by $u:=\frac{m}{m-1}\rho^{m-1}$  formally satisfies the PDE
\begin{equation}\label{pressure}
u_t = (m-1)u\Delta u + |\nabla u|^2  + \nabla u\cdot \nabla \Phi + (m-1) u\nabla\Phi.
\end{equation} 
Based on this observation, we will first construct a supersolution of \eqref{pressure}, and use the pressure-density transformation to construct the corresponding supersolution of \eqref{fixed_drift}. Let us define
$$
\tilde{U}(x,t):=A\inf_{|x-y|\leq C-Ct}e^{-Ct}(|y|+\omega t-B)_+,
$$
where  $\omega =(1+(m-1)(d-1)) A$, and the constants $B$ and $C$ will be chosen later.

Let $\Sigma:=\{|x|\leq 2B\}\times [0,\omega^{-1}B]$.  Due to Proposition 2.13 in \cite{kl}, $\tilde{U}$ is a viscosity (or weak) supersolution of \eqref{pressure} if $C$ is chosen to be larger than $\max(C_0,C_1)$ given in \eqref{bd1}-\eqref{bd2}. In other words, $\tilde{U}$ satisfies
$$
\tilde{U}_t \geq (m-1)\tilde{U}\Delta\tilde{U} +|\nabla\tilde{U}|^2+ C| \nabla\tilde{U}|+C\tilde{U} \quad\hbox{ in } \{\tilde{U}>0\}\cap\Sigma,
$$ 
 and the outward normal velocity $V_{x,t}$ of the set $\{\tilde{U}>0\}$ at $(x,t)\in\partial\{\tilde{U}>0\}$ satisfies
$$
V_{x,t} = \omega +C \geq  A+C \geq |\nabla\tilde{U}| +C.
$$
Hence $\tilde{\rho}:= (\frac{m-1}{m}\tilde{U})^{1/(m-1)}$ satisfies
$$
\tilde{\rho}_t \geq \Delta(\tilde{\rho}^m)  + C| \nabla\tilde{\rho}| + \frac{C}{m-1}\tilde{\rho} 
$$
 in the domain $\Sigma$, in the viscosity sense (see \cite{kl} for the definition of viscosity solutions of \eqref{fixed_drift}).

Moreover, observe that $\tilde{\rho}^{m-1} \sim \tilde{U}$ is  Lipschitz continuous in space, and continuous in space and time. Using this regularity of $\tilde{\rho}$ as well as the above estimates on the derivatives of $\Phi$, it follows that $\tilde{\rho}$  is a weak supersolution of \eqref{pde} in $\Sigma$, if we choose $C$ greater than $(m-1)C_1$. More precisely the following is true:  for all times $0<t\leq \omega^{-1}B$ and for any smooth, nonnegative function \\$\psi(x,t):\R^n\times (0,\infty) \to \R$ with $\{\psi(\cdot,t)>0\}\subset \{|x|\leq 2B\}$ for $0\leq t\leq \omega^{-1}B$, we have
$$
\int \tilde{\rho}(\cdot,t)\psi(\cdot,t) dx \geq \int \tilde{\rho}(\cdot,0)\psi(\cdot,0) dx +\int \int (\tilde{\rho}^m\Delta\psi + \tilde{\rho}\psi_t -\tilde{\rho}\nabla \Phi\cdot \nabla\psi) dx dt.
$$

Now suppose $\{\rho(\cdot,t_0)>0\}\subset B_R(0) $ for  some $t_0\in[0, T]$. Let us compare  $\rho$  with $\tilde{\rho}$ in \\$\Sigma:=\{|x|\leq 2B\}\times [t_0,t_0+h]$, with
$B= R+h^{1/2}$ and $A = 2C_0h^{-1/2}$. 

Since $\{\rho(\cdot,t_0)>0\}\subset B_R(0)$ and $\rho \leq C_0$, we have $\rho \leq \tilde{\rho}$ at $t=t_0$. Therefore comparison principle asserts that $\rho \leq \tilde{\rho}$ at $t=t_0+h$. In particular 
$$
\{\rho(\cdot, t_0+h)>0\}\subset \{\tilde{\rho}(\cdot,t_0+h)>0\} = B_{R+Mh^{1/2}}(0),
$$
where $M= h^{1/2} + h\omega$ and $\omega= (1+(m-1)(d-1))A$. This proves (b).
\end{proof}
\begin{remark}
Due to  \cite{brb}, when $m>2-\frac{2}{d}$, there exists a global weak solution $\rho$ of (1.1) with initial data $\rho_0$.  Moreover, $\rho$ is uniformly bounded in $\mathbb{R}^d\times (0,\infty)$ due to Theorem 10 in \cite{brb}, so in that case we may let $T=\infty$.
 \end{remark}

 We finish this section with an approximation lemma which links case (A) and (B). Let 
 $$
 h^\e := \e^{-d} h(\frac{x}{\e})
 $$ with $h$ being the standard mollifier in $\R^d$ with unit mass, and let $\rho^\e$ be the corresponding solution of \eqref{pde} with $V=\mathcal{N}*h^\e$ and with initial data $\rho_0$. Then Lemma 8 in \cite{brb} yields that $\{\rho^\e\}_{\e>0
}$ are uniformly bounded for $t\in[0,T]$ for some $T$. This bound as well as Theorem 6.1 of \cite{dib} yields that the family of solutions $\{\rho^\e\}$ are equi-continuous in space and time. This immediately yields the following result:

\begin{proposition}\label{approximation}
Let $\rho_0$ be as given in Theorem~\ref{unif_cont}. Let  $V=\mathcal{N}*h^\e$ and let $\rho^\e$ be the corresponding weak solution of \eqref{pde} with initial data $\rho_0$. Let $\rho$ be the unique solution to \eqref{pde} with $V=\mathcal{N}$ and initial data $\rho_0$, and assume $\rho$ exists for $t\in[0,T)$, where $T>0$ may be infinite. Then the solutions $\rho^\e$ locally uniformly converge to $\rho$ in $\mathbb{R}^d \times [0,T)$.
\end{proposition}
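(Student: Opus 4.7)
My plan is to use a standard compactness-plus-uniqueness argument: extract a locally uniformly convergent subsequence of $\{\rho^\e\}$, show the limit is a weak solution of \eqref{pde} with $V=\mathcal{N}$, and then invoke uniqueness (Theorem 1.1 in the excerpt) to conclude the full family converges.

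First, by the uniform $L^\infty$ bound and the equi-continuity provided in the paragraph preceding the statement, Arzel\`a--Ascoli gives a subsequence (still denoted $\rho^{\e}$) that converges locally uniformly on $\R^d\times[0,T')$ for every $T'<T$ to some limit $\tilde\rho$ which is uniformly continuous. Moreover, since $\Delta\mathcal{N}$ equals a multiple of the Dirac mass, we have $\|\Delta(\mathcal{N}*h^\e)\|_1=\|h^\e\|_1=1$ uniformly in $\e$, so the finite-propagation statement (b) of Theorem \ref{unif_cont} applies uniformly in $\e$: if $\mathrm{supp}\,\rho_0\subset B_{R_0}(0)$, then $\mathrm{supp}\,\rho^\e(\cdot,t)\subset B_{R(t)}(0)$ for a function $R(t)$ independent of $\e$. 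In particular $\tilde\rho(\cdot,t)$ also has support in $B_{R(t)}(0)$.

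Next I would pass to the limit in the weak formulation. The diffusion piece is easy: $(\rho^\e)^m\to\tilde\rho^{\,m}$ locally uniformly, and the time-derivative term handles itself via the test function. The main work is the drift term $\rho^\e\,\nabla(\rho^\e*V^\e)$ with $V^\e=\mathcal{N}*h^\e$. Write
\[
\nabla(\rho^\e*V^\e)-\nabla(\rho*\mathcal{N})=\rho^\e*(\nabla V^\e-\nabla\mathcal{N})+(\rho^\e-\tilde\rho)*\nabla\mathcal{N}+(\tilde\rho-\rho)*\nabla\mathcal{N}.
\]
The second term vanishes pointwise because $\nabla\mathcal{N}\in L^1_{\mathrm{loc}}$ (as $|\nabla\mathcal{N}(x)|\sim|x|^{1-d}$ is locally integrable in $d\ge 3$), $\rho^\e-\tilde\rho\to 0$ uniformly on $B_{R(t)}$, and $\rho^\e$ is uniformly compactly supported. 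The first term is handled the same way, using $\nabla V^\e=(\nabla\mathcal{N})*h^\e\to\nabla\mathcal{N}$ in $L^1_{\mathrm{loc}}$ together with the uniform $L^\infty$-bound and uniform compact support of $\rho^\e$. The third term is zero once I identify $\tilde\rho=\rho$ below; pre-identification, one just carries the limit $(\tilde\rho*\nabla\mathcal{N})$ through. These convergences, together with the uniform compact support of $\rho^\e$ and the regularity of the test function, allow me to pass to the limit inside the space-time integral and conclude that $\tilde\rho$ is a weak solution of \eqref{pde} with kernel $V=\mathcal{N}$ and initial data $\rho_0$.

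By Theorem 1.1 (the cited existence/uniqueness of \cite{brb}), the weak solution with data $\rho_0$ is unique on $[0,T)$, hence $\tilde\rho=\rho$. Since every subsequence of $\{\rho^\e\}$ has a further subsequence converging locally uniformly to the same limit $\rho$, the whole family converges locally uniformly on $\R^d\times[0,T)$. The main obstacle is really just the singular-kernel limit step; the decomposition above, exploiting uniform compact support of $\rho^\e$ (from finite propagation with an $\e$-independent constant) and $L^1_{\mathrm{loc}}$ approximation of $\nabla\mathcal{N}$ by $\nabla V^\e$, is what makes it go through.
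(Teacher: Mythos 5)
Your proof is correct and follows essentially the same route as the paper: uniform boundedness and equicontinuity (cited from \cite{brb} and \cite{dib}) give local uniform compactness via Arzel\`a--Ascoli, and the limit is identified with $\rho$ by uniqueness. The paper states this as ``immediately yields''; you have simply filled in the passage to the limit in the singular drift term, which is the right thing to make explicit.
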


\section{Monotonicity-preserving properties of solutions}
In this section, we show that when $V$ is given by (A) or (B), solutions with radially decreasing initial data remains radially decreasing for all future times.
The main step in the proof is the maximum principle-type argument applied to the double-variable function
$$
\Psi(x,y;t) := \rho(x,t)-\rho(y,t) \hbox{ in } \{|x|\geq |y|\} \times [0,\infty)
$$
to ensure that $\Psi$ cannot achieve a positive maximum at a positive time.

We begin with an observation on the convolution term; the proof is in the appendix.

\begin{lemma} \label{integral_lemma}
Let $V(x)$ be given by (B). Let $u(x)$ be a bounded non-negative radially symmetric function in $\mathbb{R}^d$ with compact support. Further suppose $u(x)$ is not radially decreasing, i.e. there exists $a_1=(\alpha,0,...,0)$ and $b_1=(\beta,0,...,0)$ with $\alpha,\beta>0$ such that
\begin{equation}\label{assumption_ab}
u(b_1)-u(a_1) = \sup_{|a|<|b|} u(b)-u(a) >0.
\end{equation}
Then we have
 $$
 (u*\Delta V)(b_1) - (u*\Delta V)(a_1) \leq \|\Delta V\|_{L^1}(u(b_1)-u(a_1)).
 $$
\end{lemma}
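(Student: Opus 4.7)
The plan is to reduce the inequality to a reflection-folding estimate. Setting $w := \Delta V$, the identity $\Delta \mathcal{N} = \delta_0$ gives $w = h$, so $w$ is nonnegative, radially symmetric, radially decreasing, and in $L^1(\mathbb{R}^d)$. Writing $K := u(b_1) - u(a_1) > 0$, the goal becomes
\[
(u*w)(b_1) - (u*w)(a_1) \;=\; \int_{\mathbb{R}^d} \bigl[u(b_1 - z) - u(a_1 - z)\bigr]\, w(z)\, dz \;\leq\; K\, \|w\|_{L^1}.
\]

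The key step will be to fold the integral across the perpendicular bisector of $\overline{a_1 b_1}$. Let $P$ denote the hyperplane $\{z_1 = (\alpha+\beta)/2\}$ and let $\sigma$ be the reflection across $P$, so that $\sigma$ is an isometry, $\sigma^2 = \mathrm{id}$, and crucially $\sigma(a_1) = b_1$, $\sigma(b_1) = a_1$. Split $\mathbb{R}^d = H^+ \sqcup H^-$ (up to the measure-zero set $P$), where $H^+ := \{z_1 < (\alpha+\beta)/2\}$ contains both $a_1$ and the origin, while $\sigma$ sends $H^-$ bijectively onto $H^+$ preserving Lebesgue measure. Changing variables $z = \sigma(y)$ in the $H^-$-part and using the radiality of $u$ together with the identity $|b_1 - \sigma(y)| = |\sigma(b_1) - y| = |a_1 - y|$ (and its analog), I would rewrite the full integral as
\[
(u*w)(b_1) - (u*w)(a_1) \;=\; \int_{H^+} \bigl[u(b_1 - y) - u(a_1 - y)\bigr]\bigl[w(y) - w(\sigma(y))\bigr]\, dy.
\]

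For $y \in H^+$ one has $|a_1 - y| \leq |b_1 - y|$, so the sup-attainment hypothesis \eqref{assumption_ab} forces $u(b_1 - y) - u(a_1 - y) \leq K$ (the boundary case $|a_1 - y| = |b_1 - y|$ gives $0 \leq K$ by radiality of $u$). Since the origin lies in $H^+$, the elementary identity $|y|^2 - |\sigma(y)|^2 = (2y_1 - (\alpha+\beta))(\alpha+\beta)$ gives $|y| \leq |\sigma(y)|$ throughout $H^+$, and the radial monotonicity of $w$ then yields $w(y) - w(\sigma(y)) \geq 0$. Thus the integrand is bounded pointwise by $K\bigl[w(y) - w(\sigma(y))\bigr]$ (this is automatic where the $u$-difference is negative, since the $w$-factor is nonnegative), and integrating gives
\[
(u*w)(b_1) - (u*w)(a_1) \;\leq\; K \Bigl(\int_{H^+} w\, dy - \int_{H^-} w\, dz\Bigr) \;\leq\; K\, \|w\|_{L^1},
\]
where the last step uses $\int_{H^-} w \geq 0$.

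The main obstacle I anticipate is simply recognizing the correct folding. Once one spots that reflection across the perpendicular bisector swaps $a_1$ and $b_1$, radiality of $u$ converts the $H^-$-integral into an $H^+$-integral with the opposite sign on the $u$-difference; the remaining asymmetry---that the origin lies on the $a_1$-side of $P$ because $\alpha, \beta > 0$---is precisely what makes $w$ heavier on $H^+$ and produces the favorable sign $w(y) \geq w(\sigma(y))$. Both the radiality of $u$ and the radial monotonicity of $w = \Delta V$ are essential: removing either would break the sign of one of the two factors after folding.
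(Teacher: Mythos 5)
Your proof is correct and hinges on the same reflection across the perpendicular bisector of $\overline{a_1 b_1}$ that the paper uses, but it is executed more directly. The paper first approximates $\Delta V$ by a sum of bump functions $c\chi_{B(0,r)}$ and then, for each ball, reduces the difference $(u*\chi_{B(0,r)})(b_1)-(u*\chi_{B(0,r)})(a_1)$ to an integral over two lune-shaped regions $\Omega_A,\Omega_B$ that are swapped by the same reflection, after which only the sup-hypothesis on $u$ is needed. You instead fold the entire convolution integral at once, which converts the $H^-$ piece into an $H^+$ integral with the opposite sign in the $u$-difference via radiality and isometry, yielding the product $[u(b_1-y)-u(a_1-y)]\,[w(y)-w(\sigma(y))]$. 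At that point the radial monotonicity of $w=\Delta V=h$ is used pointwise to guarantee $w(y)\geq w(\sigma(y))$ on $H^+$, replacing the paper's bump-function approximation. Both arguments rely on the same geometric facts ($a_1$ and the origin lie on the same side of $P$; $|y|\leq|\sigma(y)|$ there), and both need the sup-hypothesis and the radial-decreasing structure of $\Delta V$; your version simply packages the monotonicity of $\Delta V$ into a single pointwise sign condition rather than a limiting layer-cake approximation, which makes the proof marginally shorter and self-contained.
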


\begin{theorem} \label{decreasing}
Let $V(x)$ be given by (A) or (B).
Suppose that the initial data \\$\rho(x,0)\in L^1(\mathbb{R}^d; (1+|x|^2)dx)\cap L^\infty(\mathbb{R}^d)$ is \emph{radially decreasing}, i.e. $\rho(x,0)$ is radially symmetric and is a decreasing function of $|x|$. We assume a weak solution  $\rho$ exists for $t\in[0,T)$, where $T$ may be infinite. Then $\rho(x,t)$ is radially decreasing for all $t\in[0,T)$.
\end{theorem}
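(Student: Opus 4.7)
The plan is a double-variable maximum-principle argument applied to
\[
\Psi(x,y;t) := \rho(x,t) - \rho(y,t) \quad\text{on } \{|x|\geq|y|\}\times[0,T).
\]
By uniqueness of \eqref{pde} applied to rotations of $\rho$, the solution stays radial, so showing radial monotonicity reduces to showing $\Psi\leq 0$ on $\{|x|\geq|y|\}$. To have classical spatial derivatives at extremal points, I would first regularize: by Proposition~\ref{approximation} it suffices to treat case (B), and within (B) one approximates $\rho_0$ by smooth, strictly positive, radially decreasing data with sufficient decay at infinity, so that the corresponding solutions are smooth and strictly positive on compact time intervals by standard parabolic theory. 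The ensuing estimates will be uniform in the approximation parameters, since only $\|\rho\|_\infty$ (uniformly bounded) and $\|\Delta V\|_{L^1}$ enter, and in the mollification $V^\epsilon = \mathcal{N}*h^\epsilon$ recovering case (A), $\|\Delta V^\epsilon\|_{L^1} = \|h^\epsilon\|_{L^1} = 1$.

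Fix such a smooth, strictly positive, radial solution and a horizon $T'<T$. For $\epsilon,K>0$ to be chosen set
\[
\Psi_\epsilon(x,y;t) := \rho(x,t)-\rho(y,t) - \epsilon e^{Kt}.
\]
At $t=0$, radial monotonicity of $\rho_0$ forces $\Psi_\epsilon\leq -\epsilon<0$; on the diagonal $\{|x|=|y|\}$, radial symmetry of $\rho(\cdot,t)$ gives $\Psi_\epsilon = -\epsilon e^{Kt}<0$; and decay at infinity makes $\Psi_\epsilon<0$ outside a compact region. Suppose, toward contradiction, $\Psi_\epsilon$ becomes nonnegative somewhere on $[0,T']$. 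Let $t_0>0$ be the first time at which $\sup_{|x|\geq|y|}\Psi_\epsilon(\cdot,\cdot,t_0)=0$, achieved at an interior point $(x_0,y_0)$ with $|x_0|>|y_0|$. Rotational symmetry of $\rho(\cdot,t_0)$ lets me place $x_0,y_0$ on the positive $x_1$-axis, matching the hypothesis of Lemma~\ref{integral_lemma}.

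At $(x_0,y_0,t_0)$ the first- and second-order conditions read $\nabla\rho(x_0)=\nabla\rho(y_0)=0$, $\Delta\rho(x_0)\leq 0\leq\Delta\rho(y_0)$, and $\rho_t(x_0,t_0)-\rho_t(y_0,t_0)\geq K\epsilon e^{Kt_0}$. Expanding
\[
\rho_t = m\rho^{m-1}\Delta\rho + m(m-1)\rho^{m-2}|\nabla\rho|^2 + \nabla\rho\cdot\nabla(\rho*V) + \rho\,(\rho*\Delta V),
\]
the gradient terms vanish and the diffusion difference $m[\rho^{m-1}(x_0)\Delta\rho(x_0)-\rho^{m-1}(y_0)\Delta\rho(y_0)]$ is nonpositive. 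For the aggregation piece, the identity $ab-cd=(a-c)b+c(b-d)$ combined with Lemma~\ref{integral_lemma} (which bounds $(\rho*\Delta V)(x_0)-(\rho*\Delta V)(y_0)\leq\|\Delta V\|_{L^1}(\rho(x_0)-\rho(y_0)) = \|\Delta V\|_{L^1}\epsilon e^{Kt_0}$) yields
\[
\rho(x_0)(\rho*\Delta V)(x_0)-\rho(y_0)(\rho*\Delta V)(y_0)\;\leq\; 2\,\|\rho\|_\infty\,\|\Delta V\|_{L^1}\,\epsilon e^{Kt_0}.
\]
Combining gives $K\epsilon e^{Kt_0}\leq 2\|\rho\|_\infty\|\Delta V\|_{L^1}\,\epsilon e^{Kt_0}$, which contradicts the choice $K>2\|\rho\|_\infty\|\Delta V\|_{L^1}$. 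Hence $\Psi_\epsilon\leq 0$ on $[0,T']$; letting $\epsilon\to 0$ and $T'\uparrow T$ yields $\Psi\leq 0$ for the smooth approximant, and passing to the limit in the regularization via local uniform convergence (Theorem~\ref{unif_cont} and Proposition~\ref{approximation}) concludes.

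The main obstacle is making this formal max-principle calculation rigorous in the presence of degenerate diffusion and a nonlocal term. Degeneracy of $\Delta\rho^m$ at $\{\rho=0\}$ prevents classical analysis at the free boundary and forces the smooth-positive regularization; the nonlocal term is precisely what Lemma~\ref{integral_lemma} is engineered to tame — absent that integral bound on $(\rho*\Delta V)(x_0)-(\rho*\Delta V)(y_0)$ in terms of $\rho(x_0)-\rho(y_0)$, the cross term could have the wrong sign and destroy the contradiction.
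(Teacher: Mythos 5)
Your proposal is correct and takes essentially the same approach as the paper: both proofs reduce to case (B) via approximation, define the double-variable quantity $\rho(x,t)-\rho(y,t)$ on $\{|x|\geq|y|\}$, and run a maximum-principle argument whose key step is Lemma~\ref{integral_lemma} applied at a positive extremal pair placed on the positive $x_1$-axis, together with the identity $ab-cd=(a-c)b+c(b-d)$ and the bound $|(\rho*\Delta V)|\leq\|\rho\|_\infty\|\Delta V\|_1$, yielding the same threshold $\lambda>2\|\rho\|_\infty\|\Delta V\|_1$. The only (cosmetic) difference is the penalization device: you subtract $\epsilon e^{Kt}$ and look at the first touching time, while the paper works with $w(t):=\sup_{|a|\leq|b|}(\rho(b,t)-\rho(a,t))$ and maximizes $w(t)e^{-\lambda t}$; these are interchangeable, and your version is slightly more explicit about why the extremal pair is attained (decay at infinity) and about the stability of $\|\Delta V^\epsilon\|_{L^1}=1$ under the mollification used to recover case (A).
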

\begin{proof}

1. Without loss of generality we assume that $V$ is given by (B), and $\rho(x,0)$ is positive and smooth. Then a classical solution $\rho(\cdot, t)$ exists for all $t\geq 0$, and we want to show $\rho(\cdot, t)$ is radially decreasing for all $t\geq 0$. When $V=\mathcal{N}$, we can use mollified Newtonian kernel to approximate $\mathcal{N}$; and for general radially decreasing initial data, we can use positive and smooth functions to approximate $\rho(x,0)$. Then the result follows via Proposition \ref{approximation}.  

2. Radial symmetry of $\rho$ for all $t>0$ directly follows from the uniqueness of weak solution.  To prove that $\rho$ is radially decreasing for all time, let us define
\begin{equation*}
w(t) := \sup_{|a|\leq |b|} \rho(b,t)-\rho(a,t).
\end{equation*}
Since $\rho$ is uniformly bounded and uniformly continuous in $\mathbb{R}^d\times [0,\infty)$,  $w(t)$ is continuous in $t$, and uniformly bounded for $t\in[0,\infty)$. Moreover, $\rho(x,0)$ being radially decreasing is equivalent with  $w(0)=0$. We will use a maximum principle-type argument to show that $w(t) = 0$ for all $t\geq 0$, which proves the theorem.

Suppose $w \not \equiv 0$. Then for any $\lambda>0$ the function $w(t)e^{-\lambda t}$ has a positive maximum at $t=t_1$ for some $t_1>0$. We will show that this cannot happen when we choose $\lambda>2\|\rho\|_\infty\|\Delta V\|_1$.

At $t=t_1$ there exists $a_1=(\alpha,0,...,0)$ and $b_1=(\beta,0,...,0)$ such that
 $\alpha <\beta$ and
\begin{equation}\label{achieve_sup}
\rho(b_1,t_1)-\rho(a_1,t_1) = w(t_1)>0. \quad\text{(See Figure \ref{rho_t1}.)}
\end{equation}

\begin{figure}[h]
\begin{center}
\epsfbox{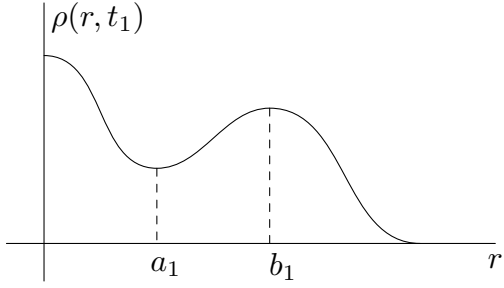}
\end{center} 
\vspace{-0.4cm}
\caption{Graph of $\rho$ at time $t_1$}
\label{rho_t1}
\end{figure}

Moreover by definition $\rho(b_1,t)-\rho(a_1,t) \leq w(t)$, and thus
\begin{equation*}
\frac{d}{dt} ((\rho(b_1,t)-\rho(a_1,t)) e^{-\lambda t}) = 0~~~\text{at }t=t_1,
\end{equation*}
which means
\begin{equation}\label{rho_t_ineq}
\rho_t(b_1,t_1)-\rho_t(a_1,t_1) = \lambda (\rho(b_1,t_1)-\rho(a_1,t_1)).
\end{equation}
 Further observe that $\rho(\cdot,t_1)$ has a local minimum (in space only) at $a_1$ and a local maximum at $b_1$. This yields
\begin{equation*}
\nabla \rho(a_1,t_1) = 0\quad{ and } \nabla \rho(b_1, t_1)=0,
\end{equation*}
as well as 
\begin{equation*}
\Delta \rho^m(a_1, t_1)\geq 0 \hbox{ and } \Delta \rho^m(b_1, t_1)\leq 0.
\end{equation*}

Let us now make use of the equation (1.1) that $\rho$ satisfies to get a contradiction: we have
\begin{eqnarray}\label{hoho}
\nonumber\rho_t(b_1,t_1)-\rho_t(a_1,t_1) &=& \Delta \rho^m(b_1,t_1)  + \nabla \cdot (\rho\nabla(\rho*V))(b_1,t_1) \\
\nonumber&& - \Delta \rho^m(a_1,t_1) - \nabla \cdot (\rho\nabla(\rho*V))(a_1,t_1)\\[0.3cm]
\nonumber&\leq& \rho(b_1,t_1) (\rho*\Delta V)(b_1,t_1) - \rho(a_1,t_1) (\rho*\Delta V)(a_1,t_1)\\[0.3cm]
&=&\rho(b_1, t_1)[(\rho*\Delta V)(b_1,t_1)-(\rho*\Delta V)(a_1,t_1)]\label{rho_t_ineq2}\\
\nonumber&& + (\rho(b_1,t_1)-\rho(a_1,t_1)) (\rho*\Delta V)(a_1,t_1). 
\end{eqnarray}

In order to bound the first term in \eqref{rho_t_ineq2}, we apply Lemma \ref{integral_lemma},which gives
\begin{equation}\label{xx1}
(\rho*\Delta V)(b_1,t_1)-(\rho*\Delta V)(a_1,t_1) \leq \|\Delta V\|_1(\rho(b_1,t_1)-\rho(a_1,t_1)),
\end{equation}
and for the second term we use 
\begin{equation}(\rho*\Delta V)(a_1,t_1) \leq \|\rho\|_\infty \|\Delta V\|_1.\end{equation} Due to the estimates \eqref{xx1}-\eqref{xx2},  \eqref{hoho} yields that
\begin{eqnarray*}
\nonumber\rho_t(b_1,t_1)-\rho_t(a_1,t_1) &\leq&2\|\rho\|_\infty\|\Delta V\|_1 (\rho(b_1,t_1)-\rho(a_1,t_1)),
\end{eqnarray*}
which contradicts \eqref{rho_t_ineq} since we chose $\lambda$ to be strictly greater than $2\|\rho\|_\infty\|\Delta V\|_1$.
\end{proof}

The following proposition states that in the previous theorem, the condition that $\Delta V$ is radially decreasing is indeed necessary.

\begin{proposition}\label{counter}
Let $V(x)$ be radially symmetric, $\Delta V \geq 0$, with $\Delta V$  continuous, but not radially decreasing.   Then there exists a radially decreasing initial
data $\rho_0$  such that the solution $\rho(x,t)$ of (1.1) starting with initial data $\rho_0$ does not preserve the radial monotonicity over time.
\end{proposition}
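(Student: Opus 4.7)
My plan is to exhibit a single counterexample: a smooth radially decreasing $\rho_0$ whose pointwise time derivative at $t=0$ violates radial monotonicity at two specific radii, and then upgrade this to a statement about $\rho(\cdot,t)$ for small $t>0$ using short-time classical regularity. The non-monotonicity of $\Delta V$ enters only through the aggregation term $\rho(\rho*\Delta V)$ in the expanded form of \eqref{pde}, so I want to freeze the other two terms by making $\rho_0$ locally constant where I evaluate.

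Since $\Delta V$ is continuous and not radially decreasing, pick $0<r_1<r_2$ with $\Delta V(r_1)<\Delta V(r_2)$. I would take
\[
\rho_0(x)=\lambda\,\phi_\epsilon(x)+\psi(x),
\]
where $\phi_\epsilon$ is a smooth, nonnegative, radially decreasing bump of unit mass supported in $B_\epsilon(0)$ with $\epsilon\ll r_1$, $\lambda>0$ is a large concentration parameter, and $\psi$ is smooth, compactly supported, strictly positive and radially decreasing, with the extra feature that $\psi\equiv p_1$ on a small annular neighborhood of $\{|x|=r_1\}$ and $\psi\equiv p_2$ on such a neighborhood of $\{|x|=r_2\}$, with $p_1>p_2>0$ and $p_1-p_2=\delta$ chosen small at the end. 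The combined $\rho_0$ is smooth, compactly supported, radially decreasing, strictly positive on an open neighborhood of the annulus $\{r_1\le|x|\le r_2\}$, and lies in $L^1(\R^d;(1+|x|^2)dx)\cap L^\infty(\R^d)$.

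On the two annular neighborhoods $\rho_0$ is locally constant, hence $\nabla\rho_0\equiv 0$ and $\Delta\rho_0^m\equiv 0$ there, so the expansion of \eqref{pde} reduces pointwise to $\rho_t(x,0)=\rho_0(x)\,(\rho_0*\Delta V)(x)$. Because $\Delta V$ is continuous away from the origin, $\phi_\epsilon*\Delta V\to\Delta V$ locally uniformly, while $\psi*\Delta V$ stays bounded independently of $\lambda$. Taking $\lambda$ large and $\epsilon$ small therefore gives
\[
\rho_t(r_2,0)-\rho_t(r_1,0)=p_2(\rho_0*\Delta V)(r_2)-p_1(\rho_0*\Delta V)(r_1)\approx \lambda p_2\bigl(\Delta V(r_2)-\Delta V(r_1)\bigr)-\lambda\delta\,\Delta V(r_1),
\]
which is strictly positive as soon as $\delta$ is chosen small relative to $p_2(\Delta V(r_2)-\Delta V(r_1))/(1+\Delta V(r_1))$. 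Meanwhile the initial gap $\rho_0(r_1)-\rho_0(r_2)=\delta$ is itself small.

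Finally, since $\rho_0$ is smooth and bounded away from zero in an open neighborhood of the spheres $\{|x|=r_i\}$, \eqref{pde} is uniformly parabolic there, the nonlocal drift $\nabla(\rho*V)$ is Lipschitz in space and continuous in time for short time, and the weak solution coincides with a classical solution that is $C^1$ in $t$ on a space-time neighborhood of those points. Continuity of $\rho_t$ then yields
\[
\rho(r_1,t)-\rho(r_2,t)=\delta+t\bigl[\rho_t(r_1,0)-\rho_t(r_2,0)\bigr]+o(t),
\]
which is \emph{negative} for all sufficiently small $t>0$, so $\rho(\cdot,t)$ fails to be radially decreasing. The main obstacle is this last regularization step: justifying that the formal pointwise identity for $\rho_t(\cdot,0)$ actually controls $\rho(\cdot,t)$ at small positive times, which is why I insist on strict positivity and smoothness of $\rho_0$ on a full open neighborhood of the two test spheres so that standard local parabolic theory applies directly.
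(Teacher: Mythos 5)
Your overall strategy --- put a sharp bump at the origin so that $\rho_0*\Delta V$ is close to a scaled copy of $\Delta V$, sit on a locally constant background near the two test radii to kill $\Delta\rho^m$ and $\nabla\rho$, then compare the pointwise time derivatives --- is exactly the paper's argument. The one genuine flaw is in the very last sentence: with $p_1>p_2$ you write
\[
\rho(r_1,t)-\rho(r_2,t)=\delta+t\bigl[\rho_t(r_1,0)-\rho_t(r_2,0)\bigr]+o(t),
\]
and claim this is ``negative for all sufficiently small $t>0$.'' It is not. Setting $\gamma:=\rho_t(r_2,0)-\rho_t(r_1,0)>0$, the quantity $\delta - t\gamma + o(t)$ is still \emph{positive} for every $t<\delta/\gamma$, and only turns negative on some intermediate window of times. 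You would have to check that this window actually intersects the interval on which your Taylor expansion (and hence your $o(t)$ control) is valid, which requires a quantitative chase through the constants $\lambda$, $\epsilon$, $\delta$, and the short-time parabolic estimates. That chase can be made to close, but it is an unnecessary headache.

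The paper sidesteps it entirely by taking $\delta=0$: the background $\epsilon\,\chi_{B(0,|x_2|+1)}*\phi$ is \emph{constant} on an annulus containing both $x_1$ and $x_2$, so $\rho_0(x_1)=\rho_0(x_2)$ and $\rho(x_1,t)-\rho(x_2,t)=-t\gamma+o(t)<0$ for all small $t>0$, with no tradeoff to track. In your construction this means you should drop the insistence that $\psi(r_1)>\psi(r_2)$ and instead let $\psi$ be constant on $[r_1,r_2]$; ``radially decreasing'' is used in the non-strict sense throughout the paper, so nothing is lost. (Your separate scaling parameter $\lambda$ is a nice decoupling of the concentration scale from the peak height, but the paper's choice $\lambda=\epsilon^{-d}$ tied to the mollifier width suffices.) With that one adjustment your argument is correct and matches the paper.
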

\begin{proof}
Since $\Delta V$ is not radially decreasing, we can find $x_1, x_2 \in \mathbb{R}^d$, such that $0<|x_1| < |x_2|$, and $\Delta V(x_1) < \Delta V(x_2)$.

For a small $\e>0$, let $\rho_0(x)$ be given as below:
\begin{equation*}
\rho_0(x) = \epsilon \chi_{B(0,x_2+1)}*\phi(x) + \frac{1}{\epsilon^d} \phi(\frac{x}{\epsilon}),
\end{equation*}

where $\chi_E$ is the characteristic function of $E$ and $\phi$ is a radially symmetric mollifier with unit mass and supported in $B(0,r_0)$, where $r_0 < \min\{1,|x_1|/2\}$. Note that in a small space-time neighborhood of $x_1$ and $x_2$, $\rho$ solves a uniformly parabolic equation, and thus is smooth.

Since $\Delta \rho^m(x_i,0)=\nabla \rho(x_i,0) = 0$ for $i=1,2$, we have
\begin{equation*}
\rho_t(x_i,0) = \rho_0(x_i) (\rho_0*\Delta V)(x_i),~~~i = 1,2.
\end{equation*}
Since $\rho_0(x_1) = \rho_0(x_2)$, to show $\rho_t(x_1,0) < \rho_t(x_2,0)$, it suffices to prove
\begin{equation}
(\rho_0*\Delta V)(x_1) < (\rho_0 * \Delta V)(x_2). \label{wts_v}
\end{equation}
Note that $\rho_0 *\Delta V$ locally uniformly converges to $\Delta V(x)$ as $\e\to 0$. Since $\Delta V(x_1) < \Delta V(x_2)$, if we let $ \epsilon$ be sufficiently small, we would have \eqref{wts_v}. In particular $\rho(x_1,t) < \rho(x_2,t)$ for small $t>0$, which means $\rho(x,t)$ stops being radially monotone as soon as $t>0$.
\end{proof}

\section{Mass Comparison and asymptotic behavior for radial solutions} \label{sec_comparison}
Recall that there is no classical comparison principle for (1.1), due to the nonlocal term; however, we will prove that a comparison principle actually hold for the {\it mass function}
\begin{equation}\label{mass}
M(r,t)=M(r,t;\rho):= \int_{B(0,r)} \rho(x,t) dt
\end{equation}
(see Proposition~\ref{comp_concentration}).  We point out that the corresponding property has been observed for \eqref{pme} (\cite{v}) and also for  the Keller-Segel  model (\cite{bkln}). It turns out that mass comparison holds for \eqref{pde} if  the potential $V$ satisfies $\Delta V \geq 0$ in the distribution sense (Proposition~\ref{comp_concentration}.) As we will see later,  mass comparison effectively describes the asymptotic behavior of radial solutions in both sub- and supercritical regime.

\subsection{Mass comparison}
  First note that, if $\rho$ is a weak solution of \eqref{pde}, in the positive set, $\rho$ is a bounded solution of a locally uniformly parabolic, divergence-type equation with continuous coefficients. Hence due to \cite{lsu}, $\rho$ is at least $C^1$ in space and time variables. It follows that $M(r,t)$ is $C^2$ in space and $C^1$ in time in $\{\rho>0\}$.

Let us denote the support of $\rho$ at time $t$ by $B(0,R(t))$. Then we compute the PDE that $M(r,t)$ satisfies in $\{r<R(t)\}$. Due to \eqref{nabla_rho*V_2}, we have
\begin{eqnarray}
\nonumber\frac{\partial M}{\partial t}(r,t) &=& \int_{\partial B(0,r)} \vec{n}\cdot (\nabla \rho^m + \rho \nabla(\rho*V)) dx\\
\nonumber&=& \sigma_d r^{d-1} \Big(\frac{\partial}{\partial r}\big((\frac{\partial M}{\partial r} \frac{1}{\sigma_d r^{d-1}})^m\big) + (\frac{\partial M}{\partial r} \frac{1}{\sigma_d r^{d-1}}) (\frac{\tilde M}{\sigma_d r^{d-1}})\Big)\\
&=& \sigma_d r^{d-1}\frac{\partial}{\partial r} \big((\frac{\partial M}{\partial r} \frac{1}{\sigma_d r^{d-1}})^m\big)  + \big(\frac{\partial M}{\partial r} \frac{1}{\sigma_d r^{d-1}}\big)\tilde M,  \label{pde_for_m}
\end{eqnarray}
where
\begin{equation}\label{tildem}
\tilde M(r,t)=\tilde M(r,t;\rho) := \int_{B(0,r)} (\rho(\cdot, t)*\Delta V)(x) dx.
\end{equation}

\begin{definition}
Let $\rho_1$ and $\rho_2$ be two non-negative radially symmetric functions in $L^1(\mathbb{R}^d)$.  We say $\rho_1$ is \emph{less concentrated than} $\rho_2$, or $\rho_1 \prec \rho_2$, if
$$\int_{B(0,r)} \rho_1(x) dx \leq \int_{B(0,r)} \rho_2(x)dx\quad\hbox{ for all } r\geq 0.$$ \label{def:less_concentrated}
\end{definition}

\begin{definition}
Let $\rho_1(x,t)$ be a non-negative, radially symmetric function in $L^\infty(([0,T];L^1(\mathbb{R}^d)\cap L^\infty(\mathbb{R}^d))$, which is $C^1$ in its positive set. We say $\rho_1$ is a \emph{supersolution} of  \eqref{pde_for_m} if $M_1(r,t):= M(r,t;\rho_1)$ and $\tilde{M}_1:= \tilde{M}(r,t;\rho_1)$ satisfy
\begin{equation} \label{ineq_m1}
\frac{\partial M_1}{\partial t} \geq \sigma_d r^{d-1}\frac{\partial}{\partial r} \Big(\big(\frac{\partial M_1}{\partial r} \frac{1}{\sigma_d r^{d-1}}\big)^m\Big)  + \big(\frac{\partial M_1}{\partial r} \frac{1}{\sigma_d r^{d-1}}\big) \tilde M_1
\end{equation}
in the positive set of $\rho_1$.

Similarly we define a {\it subsolution} of \eqref{pde_for_m}.
\end{definition}

\begin{proposition}[Mass comparison] Suppose $m>1$. Let $V$ be given by (A) or (B), and let $\rho_1(x,t)$  be a supersolution and $\rho_2(x,t)$ be a subsolution of \eqref{pde_for_m} for $t\in [0,T]$. Further assume that the $\rho_i$'s  preserve their mass over time, i.e.,
$\int \rho_1(\cdot, t) dx$ and $\int \rho_2(\cdot, t) dx$ stay constant for all $0\leq t \leq T$.  Then the mass functions are ordered for all time: i.e.,
if  $\rho_2(x,0) \prec \rho_1(x,0)$, then we have 
$$\rho_2(x,t) \prec \rho_1(x,t)\hbox{ for all }t\in [0,T].$$
\label{comp_concentration}
\end{proposition}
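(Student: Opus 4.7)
The strategy is a maximum-principle argument for the weighted mass difference
$\widetilde W(r,t) := e^{-\lambda t}(M_2 - M_1)(r,t)$ on $[0,\infty)\times[0,T]$, with $\lambda>0$ to be chosen large. Write $W := M_2 - M_1$. The boundary behavior is favorable: $W(0,t)=0$; $W(r,0)\leq 0$ by hypothesis; and $\lim_{r\to\infty} W(r,t) = \int\rho_2 - \int\rho_1 \leq 0$ by mass preservation and the initial concentration ordering. Consequently, if $\widetilde W$ ever attains a positive maximum on $[0,\infty)\times[0,T]$, it must do so at some interior $r_0\in(0,\infty)$ with $t_0>0$, where $\partial_r W = 0$, $\partial_{rr} W\leq 0$, and $\partial_t W \geq \lambda W(r_0,t_0) > 0$. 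The first condition forces $\rho_1(r_0,t_0)=\rho_2(r_0,t_0) =: \rho_\star$.

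Subtracting the supersolution inequality for $M_1$ from the subsolution inequality for $M_2$ at $(r_0,t_0)$ and simplifying the drift terms with $\partial_r W = 0$,
$$
\partial_t W(r_0,t_0) \leq \sigma_d r_0^{d-1}\bigl[\partial_r(\rho_2^m) - \partial_r(\rho_1^m)\bigr] + \rho_\star\bigl(\widetilde M_2 - \widetilde M_1\bigr)(r_0,t_0).
$$
The bracket is nonpositive: the identity $\partial_r\rho_i = \partial_{rr} M_i/(\sigma_d r^{d-1}) - (d-1)\rho_i/r$, combined with $\rho_1 = \rho_2$ and $\partial_{rr} W \leq 0$, gives $\partial_r\rho_2 \leq \partial_r\rho_1$, so $m\rho_\star^{m-1}(\partial_r\rho_2 - \partial_r\rho_1) \leq 0$.

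The heart of the argument is to bound $\widetilde M_2(r_0,t_0) - \widetilde M_1(r_0,t_0)$ by $\|\Delta V\|_{L^1}\, W(r_0,t_0)$. In case (A), $\Delta V = \delta_0$ gives $\widetilde M_i = M_i$ so the bound is an identity with constant $1$. In case (B), with $\Delta V = h$ radial, decreasing and in $L^1$, passing to spherical coordinates,
$$
\widetilde M_i(r_0,t_0) = \int_0^\infty \partial_s M_i(s,t_0)\, K(r_0,s)\, ds, \qquad K(r_0,s) := (h*\chi_{B(0,r_0)})(y) \text{ for any } |y|=s,
$$
which is well-defined by radial symmetry of $h$. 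The map $s\mapsto K(r_0,s)$ is nonincreasing, because the convolution of two radially symmetric decreasing functions is radially decreasing, and $K(r_0,s)\to 0$ as $s\to\infty$ since $h\in L^1$ radial decreasing tends to $0$ at infinity. Integrating by parts (boundary contributions vanish at $s=0$ by $M_i(0,t_0)=0$ and at $s=\infty$ by $K(r_0,\infty)=0$),
$$
\widetilde M_2(r_0,t_0) - \widetilde M_1(r_0,t_0) = \int_0^\infty W(s,t_0)\bigl(-\partial_s K(r_0,s)\bigr)\, ds \leq W(r_0,t_0)\, K(r_0,0) \leq \|\Delta V\|_{L^1}\, W(r_0,t_0),
$$
where the first inequality uses $W(s,t_0)\leq W(r_0,t_0)$ (spatial maximality of $r_0$) together with $-\partial_s K\geq 0$. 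Assembling all estimates, $\partial_t W(r_0,t_0) \leq \|\rho\|_\infty \|\Delta V\|_{L^1}\, W(r_0,t_0)$, contradicting $\partial_t W \geq \lambda W > 0$ once $\lambda > \|\rho\|_\infty \|\Delta V\|_{L^1}$. Hence $\widetilde W \leq 0$, which is the desired ordering $\rho_2(\cdot,t) \prec \rho_1(\cdot,t)$.

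A technical point I expect to deal with is that (\ref{ineq_m1}) is stipulated only on the positive set of each $\rho_i$, so the pointwise PDE manipulation is not directly available if the candidate maximum $(r_0,t_0)$ lies in the vacuum $\rho_\star=0$. I would resolve this by approximation in the spirit of Proposition~\ref{approximation}: replace the sub/supersolution data by strictly positive smooth perturbations whose solutions are smooth and strictly positive (so the pointwise argument applies classically), and then pass to a locally uniform limit, which preserves the mass-concentration ordering $\prec$.
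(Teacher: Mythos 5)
Your proof follows the same maximum-principle strategy as the paper: weight the mass difference by $e^{-\lambda t}$, derive a contradiction at a positive interior maximum using the spatial optimality conditions, and close by bounding $\widetilde M_2 - \widetilde M_1$ in terms of $M_2 - M_1$. The structure and the choice $\lambda > \|\rho\|_\infty\|\Delta V\|_1$ are identical. Two points of genuine variation are worth noting. First, for the key estimate $\widetilde M_2 - \widetilde M_1 \leq \|\Delta V\|_{L^1}(M_2 - M_1)$ in case (B), the paper approximates $\chi_{B(0,r_1)}*\Delta V$ by a sum of radially decreasing bump functions and appeals to the mechanism of Lemma~\ref{integral_lemma}; you instead integrate by parts against the kernel $K(r_0,s)$ and use that $-\partial_s K\geq 0$ together with spatial maximality of $W$ at $r_0$. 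Your version is cleaner and makes the monotonicity of $K$ do the work directly, though the two arguments are of comparable depth. Second, you flag the issue of the maximum possibly landing in vacuum: the paper handles this by asserting (from mass preservation and the ordering of total masses) that the maximum must lie in the common positive set where $M_i\in C^{2,1}$, while you propose a positive-approximation argument in the spirit of Proposition~\ref{approximation}. Both resolutions are sound; yours is more explicit about the regularization, the paper's is terser but adequate given the preceding discussion of the regularity of $M$ on the positive set. Overall the proposal is correct and matches the paper's approach, with a somewhat slicker treatment of the convolution estimate.
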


\begin{proof}We claim that $M_2(r,t) \leq M_1(r,t)$ for all $r\geq0, t\in [0,T]$, which proves the proposition. At $t=0$, we have $M_2(r,0)\leq M_1(r,0)$ for all $r\geq 0$.  For the boundary conditions of $M_i$, note that
\begin{equation*}
\begin{cases}
M_1(0,t)=M_2(0,t)=0 \quad \text{for all $t\in [0,T]$};\\
\lim_{r\to\infty}(M_1(r,t)-M_2(r,t)) = \int_{\mathbb{R}^d} (\rho_1(x,0) - \rho_2(x,0)) dx\geq 0  \quad \text{for all $t\in [0,T]$}.
\end{cases}
\end{equation*}

To prove the claim, for given $\lambda>0$, we define
$$w(r,t):=\big(M_2(r,t)-M_1(r,t)\big)e^{-\lambda t},$$
where $\lambda$ is a large constant to be determined later. If the claim is false, then $w$ attains a positive maximum at some point $(r_1, t_1) \in (0,\infty)\times(0,T
]$.
Moreover, since the total masses of $\rho_i$'s are preserved over time and thus are ordered, we know that $(r_1,t_1)$ must lie inside the positive set for both $\rho_1$ and $\rho_2$, where the $M_i$'s are $C^{2,1}_{x,t}$.

Since it is assumed that $w$ attains a maximum at $(r_1, t_1)$, the following inequalities hold at this point:
\begin{eqnarray}
&&w_t \geq 0~\Longrightarrow \cfrac{\partial (M_2-M_1)}{\partial t} \geq \lambda (M_2-M_1);\label{compare_t}\\
&&w_r=0~\Longrightarrow \cfrac{\partial M_1}{\partial r} = \cfrac{\partial M_2}{\partial r};\label{compare_r}\\
&&w_{rr}\leq 0 ~\Longrightarrow \cfrac{\partial^2 M_1}{\partial r^2} \geq \cfrac{\partial^2 M_2}{\partial r^2}.\label{compare_rr}
\end{eqnarray}
Now consider the first term on the right hand side of \eqref{ineq_m1}, and the corresponding inequality for $M_2$. \eqref{compare_r} and \eqref{compare_rr} imply that
\begin{equation}\label{ineq:above}
\frac{\partial}{\partial r} \Big((\frac{\partial M_1}{\partial r} \frac{1}{\sigma_d r^{d-1}})^m\Big) \geq \frac{\partial}{\partial r} \Big((\frac{\partial M_2}{\partial r} \frac{1}{\sigma_d r^{d-1}})^m\Big) ~~~~\text{at } (r_1, t_1).
\end{equation}

Subtracting the inequality \eqref{ineq_m1} with the corresponding inequality for $M_2$, and using \eqref{ineq:above}, we obtain
\begin{equation}
\cfrac{\partial (M_2-M_1)}{\partial t} \leq (\frac{\partial M_1}{\partial r} \frac{1}{\sigma_d r^{d-1}}) (\tilde M_2 - \tilde M_1)\label{m2m1}~~~~\text{at } (r_1, t_1).
 \end{equation}

 We next claim $$(\tilde M_2 - \tilde M_1)(r_1, t_1) \leq C(M_2-M_1)(r_1, t_1),$$
 where $C$ only depends on $V$.  For the Newtonian potential this is obvious, since $\tilde M \equiv M$.  For a mollified Newtonian potential, we estimate $(\tilde M_2 - \tilde M_1)(r_1, t_1)$:
  \begin{eqnarray*}
\tilde M_2(r_1, t_1) - \tilde M_1(r_1, t_1) &=& \int_{\mathbb{R}^d} ((\rho_2-\rho_1)*\Delta V) ~\chi_{B(0,r_1)} dx\\
&=& \int_{\mathbb{R}^d} (\rho_2-\rho_1) (\chi_{B(0,r_1)} * \Delta V) dx.
 \end{eqnarray*}
By our assumption, $\Delta V$ is radially decreasing, thus $\chi_{B(0,r_1)} * \Delta V$ is radially decreasing and has maximum less than or equal to $\|\Delta V\|_{1}$.  Therefore we can use a sum of bump functions to approximate the function $\chi_{B(0,r_1)} * \Delta V$, where the sum of the height is less than $\|\Delta V\|_1$.
Hence, like in the proof of Lemma \ref{integral_lemma}, we get
 \begin{equation}\label{estimate1001}
 \tilde M_2(r_1, t_1) - \tilde M_1(r_1, t_1)  \leq \|\Delta V\|_{1} \sup_x (M_2 - M_1)(x, t_1) = \|\Delta V\|_1 (M_2-M_1)(r_1,t_1).
 \end{equation}
 We  plug the estimate \eqref{estimate1001} into \eqref{m2m1}, which then becomes
 \begin{equation}\label{m2m1_2}
 \cfrac{\partial (M_2-M_1)}{\partial t} \leq \rho_1 \|\Delta V\|_1 (M_2-M_1)
~~~~\text{at } (r_1, t_1).
 \end{equation}
 Now choose $\lambda > \sup_{\R^n\times (0,T]} \|\Delta V\|_1 \|\rho_1\|_{L^\infty(\mathbb{R}^d\times [0,T])}$. Then we see that \eqref{m2m1_2} contradicts \eqref{compare_t}.
\end{proof}

Observe that \eqref{pde} can be written as a transport equation
$$
\rho_t + \nabla \cdot (\rho \vec{v})=0,$$
where the \emph{velocity field} $\vec{v}$ is defined by
\begin{equation}\label{vector}
\vec{v}(x,t;\rho) := -\frac{m}{m-1}\nabla(\rho^{m-1})-\nabla(\rho*V).
\end{equation}

Then the mass function for a radial solution of \eqref{pde} satisfies
\begin{equation}\label{observation}
\frac{\partial}{\partial t}M(r,t) = -\rho(r,t) \int_{\partial B(0,r)} (\vec{v}\cdot \vec{n}) ds.
\end{equation}
The above observation along with Proposition~\ref{comp_concentration} immediately yields the following corollary:

\begin{corollary} \label{velocity_field_corollary}
Suppose $m>1$. Let $V$ be given by (A) or (B).  Let $\rho_0(x)$ be a continuous radially symmetric function, which is differentiable in its positive set.  We assume that the velocity field of $\rho_0$ is pointing inside everywhere, i.e., for $\vec{v}$ as defined in \eqref{vector},
\begin{equation}\label{velocity_field_condition}
 \vec{v}(x; \rho_0) \cdot \Big(-\dfrac{x}{|x|}\Big) \geq 0 \quad \text{in } \{\rho_0>0\}.
\end{equation}
Let $\rho$ be the weak solution of \eqref{pde} with initial data $\rho_0 \prec\rho(\cdot,0) $.  Then $ \rho_0\prec \rho(\cdot, t)$ for all $t\geq 0.$
\end{corollary}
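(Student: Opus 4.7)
The strategy is to apply the mass comparison principle (Proposition~\ref{comp_concentration}) with $\rho_0$ playing the role of a time-independent subsolution. Concretely, I would set $\tilde\rho(x,t) := \rho_0(x)$ for all $t\geq 0$; its mass function $M_0(r) := M(r;\rho_0)$ is then time-independent with $\partial_t M_0 \equiv 0$, and its total mass is trivially conserved.

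The crux is to verify that $\tilde\rho$ is a subsolution of \eqref{pde_for_m} in $\{\rho_0>0\}$. Undoing the algebraic manipulation that produced \eqref{pde_for_m} and applying the divergence theorem together with radial symmetry --- exactly as in the derivation of \eqref{observation}, which is a pointwise identity and does not actually require $\rho_0$ to satisfy the evolution equation --- one obtains
\[
\sigma_d r^{d-1}\partial_r\!\Big(\big(\tfrac{\partial_r M_0}{\sigma_d r^{d-1}}\big)^m\Big) + \big(\tfrac{\partial_r M_0}{\sigma_d r^{d-1}}\big)\tilde M_0 \;=\; -\rho_0(r)\!\int_{\partial B(0,r)}\!\vec v(x;\rho_0)\cdot \vec n\, ds,
\]
where $\vec v(\cdot;\rho_0)$ is the velocity field \eqref{vector} and $\vec n = x/|x|$. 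The inward-pointing hypothesis \eqref{velocity_field_condition} is precisely $\vec v(\cdot;\rho_0)\cdot \vec n \leq 0$ in $\{\rho_0>0\}$, so the right-hand side is $\geq 0 = \partial_t M_0$, establishing the subsolution inequality \eqref{ineq_m1}.

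With this in hand, Proposition~\ref{comp_concentration} applies with $\rho_1 := \rho$ (the genuine weak solution, hence automatically both sub- and supersolution of \eqref{pde_for_m}, with conserved mass) and $\rho_2 := \tilde\rho$ (subsolution, time-independent). The initial ordering $\rho_0 = \tilde\rho(\cdot,0) \prec \rho(\cdot,0)$ is given by hypothesis, and the proposition immediately delivers $\rho_0 \prec \rho(\cdot,t)$ for all $t\geq 0$, as desired.

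The step requiring the most care is the subsolution identity above; the remainder of the argument is a direct invocation of Proposition~\ref{comp_concentration}. A minor secondary point is that the proposition is stated for candidates that are $C^1$ in their positive set, whereas $\rho_0$ is only assumed differentiable there. If a fully rigorous treatment is needed, this can be handled by a standard mollification of $\rho_0$ that preserves the inward-velocity condition, applying the result to each approximation, and passing to the limit in the mass function (which is monotone under the approximation); I do not expect any genuine obstacle to arise from this.
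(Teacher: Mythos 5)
Your argument is exactly the paper's: define the time-independent extension $\rho_1(x,t):=\rho_0(x)$, use the identity \eqref{observation} together with the inward-velocity hypothesis \eqref{velocity_field_condition} to see that $\rho_1$ is a subsolution of \eqref{pde_for_m}, and then invoke Proposition~\ref{comp_concentration}. The regularity caveat you raise is a reasonable technical side note, but the core argument matches the paper's proof step for step.
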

\begin{proof}
Let us define
$$\rho_1(x,t) := \rho_0(x) \hbox{ for } (x,t)\in\mathbb{R}^d\times [0,\infty).$$
Then \eqref{observation} and \eqref{velocity_field_condition} yield that $\rho_1$ is a subsolution of \eqref{pde_for_m}.
 Therefore, Proposition~\ref{comp_concentration} applies to $\rho$ and $\rho_1$ and so we are done.
\end{proof}

As an application of Corollary~\ref{velocity_field_corollary}, we will show that when the initial data is radially symmetric and compactly supported, the support of the solution will stay in a large ball for all times.

\begin{corollary}[Compact solutions stay compact]\label{stay_in_compact_set}
Let $V$ be given by (A) or (B), and let $m>2-\frac{2}{d}$.  Let $\rho$ solve \eqref{pde} with a continuous, radially symmetric and compactly supported initial data $\rho(x,0)$. Then there exists $R>0$ depending on $m, d, \|\Delta V\|_1$ and $\rho(\cdot, 0)$, such that $$\{\rho(\cdot,t)>0\}\subset\{|x|\leq R\}
\hbox{ for all }t>0.$$
\end{corollary}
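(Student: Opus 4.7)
My plan is to construct a radial comparison function $\phi$ satisfying the hypotheses of Corollary~\ref{velocity_field_corollary} and let it trap the support of $\rho(\cdot,t)$ in a fixed ball. Setting $A := \int \rho_0\,dx$, the target is a $\phi$ with: (i) total mass $A$; (ii) $\mathrm{supp}\,\phi \subset B_R(0)$ for some $R = R(m,d,V,\rho_0)$; (iii) radially inward velocity $\vec v(\,\cdot\,;\phi)$ throughout $\{\phi>0\}$; and (iv) $\phi \prec \rho_0$. Once such a $\phi$ is in hand, Corollary~\ref{velocity_field_corollary} gives $\phi \prec \rho(\cdot,t)$ for every $t \geq 0$. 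Combined with mass conservation $\int \rho(\cdot,t)\,dx = A = \int \phi\,dx$ and $\phi \equiv 0$ outside $B_R$, this forces $A = M(r;\phi) \leq M(r,t;\rho) \leq A$ for all $r \geq R$, hence $M(r,t;\rho) = A$ and therefore $\{\rho(\cdot,t)>0\} \subset B_R(0)$, which is the claim.

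For the construction I take a spread-out rescaling of the stationary profile. Let $\rho_A$ denote the radial stationary solution of mass $A$ (existence by Proposition~\ref{stat_sol_regularity_decreasing}), supported in $B_{R_A}(0)$, and for a parameter $\lambda > 1$ define
\[
\phi_\lambda(x) := \lambda^{-d}\rho_A(x/\lambda),
\]
so that $\phi_\lambda$ has mass $A$ and is supported in $B_{\lambda R_A}(0)$. The key computation is the inward-velocity check. Using the stationary relation $\tfrac{m}{m-1}\nabla\rho_A^{m-1} = -\nabla(\rho_A * V)$ in $\{\rho_A > 0\}$ together with the scaling identity $\nabla(\phi_\lambda * V)(x) = \lambda^{-(d-1)}\nabla(\rho_A * V)(x/\lambda)$ (immediate from the homogeneity $\mathcal N(\lambda y) = \lambda^{-(d-2)}\mathcal N(y)$ in case (A)), a direct computation produces
\[
\vec v(\phi_\lambda)(x)\cdot \hat x = \bigl[\lambda^{-d(m-1)-1} - \lambda^{-(d-1)}\bigr]\,\partial_r(\rho_A * V)(x/\lambda).
\]
Since $\partial_r(\rho_A * V) \geq 0$ by \eqref{nabla_rho*V}--\eqref{nabla_rho*V_2}, the right-hand side is nonpositive iff $\lambda^{d(m-2)+2} \geq 1$; in the subcritical regime $m > 2-2/d$ the exponent is positive so this holds for every $\lambda > 1$. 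The ordering $\phi_\lambda \prec \rho_0$ is then achieved by taking $\lambda$ large: indeed $M(r;\phi_\lambda) = M(r/\lambda;\rho_A) \to 0$ as $\lambda \to \infty$ for each fixed $r > 0$, while $\rho_0$ has full mass $A$ in any ball containing its support, so for $\lambda$ sufficiently large (depending on $\rho_0$) we obtain $M(r;\phi_\lambda) \leq M(r;\rho_0)$ uniformly in $r$. Setting $R := \lambda R_A$ finishes case (A).

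The principal technical obstacle is case (B), where $V = \mathcal N * h$ lacks Newtonian scale-invariance and the clean identity for $\nabla(\phi_\lambda * V)$ fails. Here one instead works with the integrated quantity $\tilde M_{\phi_\lambda}(r) = \int \rho_A(u)\,g_r(\lambda u)\,du$, where $g_r(y) := \int_{B_r(y)} h(z)\,dz$ is a radial, radially decreasing function (monotonicity inherited from $h$), and establishes the analogous comparison $\tilde M_{\phi_\lambda}(r) \geq \lambda^{-[d(m-2)+2]}\tilde M_{\rho_A}(r/\lambda)$ using the radial decreasing structure of $h$ and $g_r$. A secondary subtlety is the ``hollow'' case: if $\rho_0$ vanishes on some neighborhood of the origin, then $M(r;\rho_0)=0$ for small $r$ and the ordering $\phi_\lambda \prec \rho_0$ fails near $r=0$. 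This is resolved by mollifying $\rho_0$ into a positive, radially symmetric initial datum, applying the above to the approximants, and passing to the limit via the stability of weak solutions (Proposition~\ref{approximation} and the uniform continuity in Theorem~\ref{unif_cont}).
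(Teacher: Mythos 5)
Your overall strategy---rescale the stationary profile to $\phi_\lambda(x)=\lambda^{-d}\rho_A(x/\lambda)$ (which is the paper's $a^d\rho_A(ax)$ with $a=1/\lambda<1$), verify the inward-velocity condition, invoke Corollary~\ref{velocity_field_corollary}, and close with mass conservation---is exactly the paper's proof, and your case-(A) velocity computation via the homogeneity of $\mathcal{N}$ is correct. For case (B), the inequality $\tilde M_{\phi_\lambda}(r)\geq \lambda^{-[d(m-2)+2]}\,\tilde M_{\rho_A}(r/\lambda)$ you state is indeed the exact threshold needed for $v(\phi_\lambda)\geq 0$; you only sketch why it holds, but the paper obtains the stronger $\tilde M_{\phi_\lambda}(r)\geq\tilde M_{\rho_A}(r/\lambda)$ by combining $a^{-d}\Delta V(a^{-1}\cdot)\succ\Delta V$ (for $a<1$) with the radial monotonicity of $\rho_A$, so this part is fine in spirit.

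The genuine gap is your treatment of the hollow case $\rho_0(0)=0$. Mollification at scale $\epsilon$ gives $\rho_0^\epsilon(0)=0$ whenever $\epsilon$ is smaller than the radius of the hole, so it does not produce a positive center density; and any family with $\rho_0^\epsilon\to\rho_0$ and $\rho_0^\epsilon(0)>0$ necessarily has $\rho_0^\epsilon(0)\to 0$. Since $M(r;\phi_\lambda)\sim\rho_A(0)\,c_d\,(r/\lambda)^d$ as $r\to 0$ while $M(r;\rho_0^\epsilon)\sim\rho_0^\epsilon(0)\,c_d\,r^d$, the smallest $\lambda_\epsilon$ admitting $\phi_{\lambda_\epsilon}\prec\rho_0^\epsilon$ scales like $\big(\rho_A(0)/\rho_0^\epsilon(0)\big)^{1/d}\to\infty$, hence $R_\epsilon=\lambda_\epsilon R_A\to\infty$ and the limit yields no finite $R$. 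The paper closes this case with a different device: a solution $\psi$ of \eqref{pme} with the same initial data is a mass subsolution, so $\psi\prec\rho$ by Proposition~\ref{comp_concentration}; by the classical positivity property of the porous medium equation, $\psi(0,t_0)>0$ after a finite waiting time $t_0$, hence $M(r,t_0;\rho)\geq M(r,t_0;\psi)\gtrsim r^d$ near the origin, which lets one run the main comparison starting from time $t_0$; the support on $[0,t_0]$ is then controlled by the finite propagation estimate of Theorem~\ref{unif_cont}(b). You should replace the mollification step with an argument of this type.
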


\begin{proof}
1.  We will first assume that $\rho(0,0)>0$.  Let $A:= \int_{\mathbb{R}^d} \rho(x,0) dx$, and let $\rho_A(x)$ be a radial stationary solution with mass $A$.  For any continuous radial initial data with $\rho(0,0)>0$, we can choose $a>0$ sufficiently small, such that
$$
\rho_1(x,t) := a^d \rho_A(ax) \prec \rho(x,0).
$$ 
Our aim is to show that the velocity field of $\rho_1(x,t)$ is pointing towards the inside all the time, i.e.,
\begin{equation}\label{inward_velocity}
v(r,t;\rho_1):=\vec v(r,t;\rho_1) \cdot \frac{-x}{|x|} = \frac{\partial}{\partial r} \rho_1^{m-1}(r) + \frac{\partial}{\partial r}(\rho_1*V)\geq 0.
\end{equation}

Let us assume that $V$ is given by (B); the argument for $V$ given by (A) is parallel and easier.  Recall that the stationary solution $\rho_A(x,t)$ satisfies the following equation in its positive set:
\begin{equation}
\frac{m}{m-1}\frac{\partial \rho_A^{m-1}}{\partial r} + \frac{\tilde{M}(r; \rho_A)}{\sigma_d r^{d-1}} = 0. \label{stat_ode}
\end{equation}
Therefore it follows that
\begin{eqnarray*}
\frac{m}{m-1}\frac{\partial}{\partial r} \rho_1^{m-1}(r) =a^{(m-1)d+1} \frac{m}{m-1}\frac{\partial \rho_A^{m-1}}{\partial r}(ar)= -a^{(m-1)d+1} \frac{\tilde{M}(ar; \rho_A)}{\sigma_d (ar)^{d-1}}.\end{eqnarray*}
Secondly observe that $\tilde M(r,t;\rho_1)$ satisfies
\begin{eqnarray*}
\tilde M(r,t;\rho_1) &=& \int_{B(0,r)} \int_{\mathbb{R}^d} a^d \rho_A(ay) \Delta V(y-x) dy dx\\
&=& \int_{B(0,ar)} \int_{\mathbb{R}^d} \rho_A(y) a^{-d} \Delta V(a^{-1}(y-x)) dy dx\\
&\geq & \int_{B(0,ar)} \rho_A * \Delta V dx \quad\text{(since $a^{-d}\Delta V(a^{-1}x) \succ \Delta V$ when $0<a<1$)}\\
&=& \tilde M(ar; \rho_A).
\end{eqnarray*} 
(Note that when $V$ is given by (A), direct computation yields $M(r,t;\rho_1) = M(ar; \rho_A)$.)

 Due to \eqref{nabla_rho*V_2} and the above inequalities, it follows that
\begin{eqnarray}
\nonumber
v(r,t; \rho_1)&=& \frac{\partial}{\partial r} \rho_1^{m-1}(r) + \frac{\partial}{\partial r}(\rho_1*V)\\
&\geq &  \frac{\partial}{\partial r} \rho_1^{m-1}(r) + \frac{\tilde{M}(r; \rho_1)}{\sigma_d r^{d-1}}\\
& \geq& (1-a^{d(m-2+2/d)}) a^{d-1}\frac{\tilde M(ar; \rho_A)}{\sigma_d (ar)^{d-1}}. \label{velocity_field_rescaled}
\end{eqnarray}
Since $m>2-2/d$,  the above inequality yields that the inward velocity field  $v(r, \rho_1) \geq 0$ when $a<1$. Therefore Corollary~\ref{velocity_field_corollary} implies that $ \rho(\cdot, t) \succ \rho_1$ for all $t\geq 0$. Since $\rho$ and $\rho_1$  have the same mass $A$, it follows that
$$
\{\rho(\cdot, t)>0\} \subset \{\rho_1(\cdot,t)>0\} \hbox{  for all } t>0,
$$
and we can conclude.

2. The assumption $\rho(0,0)>0$ can indeed be removed, since $\rho(0,t)$ would still become positive in finite time even if $\rho(0,0)=0$. This is because, for \eqref{pme}, it is a well-known fact that the solution will have a positive center density after finite time: this can be verified, for example, by maximum-principle type arguments using translations of Barenblatt solutions.

Note that a solution of \eqref{pme}  is a subsolution in the mass comparison sense. Hence one can compare $\rho$ with a solution $\psi$ of \eqref{pme} with initial data $\rho_0$ and apply Proposition \ref{comp_concentration} to conclude that $\psi \prec \rho$. Now our assertion follows due to the continuity of $\psi$ and $\rho$ at the origin. 
\end{proof}

\subsection{Exponential convergence towards the stationary solution in the subcritical regime}

As an application of  Proposition~\ref{comp_concentration}, we will prove the asymptotic convergence of radial solutions  to the unique radial stationary solution when the potential is given by (A) or (B).

\begin{theorem}[Exponential convergence of radial solutions with the Newtonian potential]
Let $m> 2-\frac{2}{d}$ and let $V$ be given by (A) or (B).  For given  $\rho_0\geq 0$: a continuous, radially symmetric function with compact support,  let $\rho(x,t)$ be the solution to \eqref{pde} with initial data $\rho_0$. Next let $\rho_A(x)$ be a radial stationary solution with mass $A:=\int \rho_0(x) dx$.  Then $M(r,t):= M(r,t;\rho) $ satisfies
$$
|M(r,t) - M(r; \rho_A)| \leq C_1 e^{-\lambda t},
$$
where $C_1$ depends on $\rho_0, A, m, d, V$, and the rate $\lambda$ only depends on $A, m, d, V$.
 \label{exp_conv}
\end{theorem}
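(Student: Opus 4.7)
The plan is to sandwich $\rho(\cdot,t)$ between two rescaled stationary profiles whose scaling parameters converge to $1$ exponentially fast, using only the mass-comparison machinery from Section~\ref{sec_comparison}. By Corollary~\ref{stay_in_compact_set} the support of $\rho(\cdot,t)$ stays in a fixed ball $B_R$, hence $\rho$ is uniformly bounded on $\R^d\times[0,\infty)$. Introduce the one-parameter family $\rho_a(x):=a^d\rho_A(ax)$, all of which carry mass $A$. The velocity-field computation \eqref{velocity_field_rescaled} from the proof of Corollary~\ref{stay_in_compact_set} shows that the inward normal velocity of $\rho_a$ has the sign of $1-a^{d(m-2+2/d)}$: strictly positive when $a<1$ and strictly negative when $a>1$, since $m>2-2/d$. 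Thus the static profile $\rho_a$ is a subsolution of \eqref{pde_for_m} when $a<1$ and a supersolution when $a>1$.

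Since $\rho_0$ is continuous and compactly supported, one can choose $a_-^0<1<a_+^0$ such that $\rho_{a_-^0}\prec\rho_0\prec\rho_{a_+^0}$: the lower inequality holds once $a_-^0$ is small enough that $\rho_{a_-^0}$ is spread over a ball dwarfing the support of $\rho_0$, while the upper inequality holds once $a_+^0$ is large enough that $\rho_{a_+^0}$ concentrates its mass sufficiently near the origin relative to the bounded density $\|\rho_0\|_\infty$. Corollary~\ref{velocity_field_corollary} and its obvious supersolution analog then propagate this sandwich to all times:
$$
\rho_{a_-^0}\prec\rho(\cdot,t)\prec\rho_{a_+^0}\qquad\text{for every }t\ge 0.
$$

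To upgrade this static sandwich to an exponentially tightening one, set
$$
a_-(t):=\sup\{a\le 1:\rho_a\prec\rho(\cdot,t)\},\qquad a_+(t):=\inf\{a\ge 1:\rho(\cdot,t)\prec\rho_a\},
$$
which by Proposition~\ref{comp_concentration} are monotone (nondecreasing and nonincreasing respectively). The crux of the proof is a quantitative one-step improvement: for $a=a_-(t_0)<1$, let $\tilde\rho$ denote the weak solution of \eqref{pde} with initial data $\rho_a$; by Proposition~\ref{comp_concentration} we have $\tilde\rho(\cdot,t_0+s)\prec\rho(\cdot,t_0+s)$ for all $s\ge 0$, so it suffices to show $\tilde\rho(\cdot,\tau)\succ\rho_{a+c(1-a)}$ after a fixed time $\tau>0$, with $c>0$ independent of $a\in[a_-^0,1]$. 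The strict-sign factor $(1-a^{d(m-2+2/d)})$ in \eqref{velocity_field_rescaled} provides a lower bound of order $(1-a)$ for the inward radial flux of $\rho_a$ through each sphere, and integrating \eqref{observation} against this flux on $[0,\tau]$ converts the infinitesimal estimate into a downward shift of $M(r,\tilde\rho)$ relative to $M(r;\rho_a)$ of size proportional to $\tau(1-a)$, which is exactly the required improvement. Iterating over time intervals of length $\tau$ yields $1-a_-(t)\le e^{-\lambda t}(1-a_-^0)$; a symmetric argument handles $a_+(t)$. Combined with the identity $M(r;\rho_a)=M(ar;\rho_A)$ (and its mild analog in case (B)), this gives the asserted exponential bound on $|M(r,t)-M(r;\rho_A)|$.

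The main obstacle is the third step: producing a contraction rate $c>0$ that is uniform over $a\in[a_-^0,1]$. The diffusion degenerates at the free boundary of $\rho_a$, so pointwise parabolic regularity for $\rho$ is not directly available there; the remedy is to carry out the entire integration at the level of the smooth mass function $M(r,t)$—as does every argument in Section~\ref{sec_comparison}—so that the infinitesimal velocity bound \eqref{velocity_field_rescaled} can be integrated in time into a genuine quantitative shift of $M$ that avoids the free-boundary singularity in $\rho$ itself.
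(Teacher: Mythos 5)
Your sandwich strategy — controlling $\rho(\cdot,t)$ between rescaled stationary profiles $\rho_a(x)=a^d\rho_A(ax)$ via the mass-comparison framework — is exactly the route the paper takes, and the velocity bound \eqref{velocity_field_rescaled} is the right input. However, there are two gaps, the second of which is the crux.

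First, a minor issue: your claim that one can choose $a_-^0<1$ with $\rho_{a_-^0}\prec\rho_0$ implicitly requires $\rho_0(0)>0$. Near the origin $M(r;\rho_{a})=M(ar;\rho_A)\sim a^d\rho_A(0)c_d r^d$ with $\rho_A(0)>0$, whereas $M(r;\rho_0)=o(r^d)$ if $\rho_0(0)=0$, so the inequality fails near $r=0$ for every $a>0$. Spreading $\rho_{a_-^0}$ over a larger ball does not fix this. The paper addresses it at the outset: one reduces to the case $\rho_0(0)>0$ because the center density of $\rho$ becomes positive in finite time (by comparison of $\rho$ with a PME solution in the mass-comparison sense).

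Second, and more seriously, the quantitative one-step improvement at the heart of the argument — that if $\tilde\rho$ solves \eqref{pde} with initial data $\rho_a$, $a<1$, then $\tilde\rho(\cdot,\tau)\succ\rho_{a+c(1-a)}$ for a fixed $\tau$ and $c>0$ uniform in $a$ — is asserted but not proved, and the sketch does not close the gap. You propose to obtain the shift by integrating the inward flux via \eqref{observation} over $[0,\tau]$, but \eqref{observation} involves the velocity field $\vec v(\cdot,t;\tilde\rho)$ of the \emph{evolving} solution, while the bound $v\geq C_1 a^d(1-a^{d(m-2+2/d)})r$ is available only for the initial profile $\rho_a$. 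Once $\tilde\rho$ departs from $\rho_a$, the pressure contribution $\frac{m}{m-1}\partial_r(\tilde\rho^{m-1})$ can turn negative: the ordering $\tilde\rho(\cdot,t)\succ\rho_a$ (which you do have from Corollary \ref{velocity_field_corollary}) controls the convolution term $\tilde M/(\sigma_d r^{d-1})$ but says nothing about the pressure gradient. Working at the level of $M$ does not remedy this — the problem is not regularity at the free boundary but the absence of a lower bound on the inward flux of $\tilde\rho$ at positive times.

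The paper closes precisely this gap by building a \emph{time-dependent} comparison profile that remains a rescaled stationary solution at every instant: $\phi(x,t)=k(t)^d\rho_A(k(t)x)$ with the scaling factor solving $k'=C_1 k^{d+1}(1-k^{d(m-2+2/d)})$, $k(0)=a<1$. Because $\phi(\cdot,t)$ is a rescaled $\rho_A$ for each $t$, its velocity field is computable exactly; verifying the subsolution inequality \eqref{ineq_m_phi} reduces to the same estimate \eqref{velocity_field_rescaled} with $k(t)$ in place of $a$, and the exponential convergence $1-k(t)\lesssim e^{-C_1 d(m-2+2/d)t}$ follows from the ODE. One application of Proposition \ref{comp_concentration} then gives $M_\phi\leq M\leq M_\eta$, with $\eta$ the symmetric supersolution using $C_2$ and $k(0)=a^{-1}$. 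Your discrete iteration is morally the Euler scheme for this ODE, but without the self-similar subsolution in hand the contraction step cannot be justified. To repair the proposal, replace the one-step argument by the explicit construction of $\phi$ and $\eta$.
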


\begin{proof} 

1. We will only prove the case when $V$ satisfies (B); the case for (A) can be proven with a parallel (and easier) argument. Note that we may assume $\rho_0(0)>0$ since otherwise $\rho(0,t)$ will become positive in finite time as explained in step 3 of the proof of Corollary~\ref{stay_in_compact_set}.

2. Let $\rho_A$ be a stationary solution with the same mass as $\rho_0$, given as in the proof of Corollary \ref{stay_in_compact_set}. Since $\rho_0$ is compactly supported, continuous and with $\rho_0(0)>0$, we can find a sufficiently small constant $a>0$ such that
$$
a^d{\rho_A}(ax)\prec \rho_0\quad\hbox{ and } a^{-d}{\rho_A}(a^{-1} x) \succ \rho_0.
$$

3. With the above choice of $a$, we next construct a self-similar subsolution $\phi(x,t)$ of \eqref{pde_for_m}
with initial data $\phi(x,0) = a^d{\rho_A}(ax)$ such that $M_\phi(\cdot, t) := M(\cdot,t;\phi)$ converges exponentially to $M(\cdot; \rho_A)$ as $t\to\infty$.

Here is the strategy of construction of $\phi(x,t)$ . Due to \eqref{velocity_field_rescaled}, for all $0<a<1$, the inward velocity field $v(r):=v(r;a^d\rho_A(ax))$ given by \eqref{inward_velocity} satisfies 
$$
v(r) \geq (1-a^{d(m-2+2/d)}) a^{d}r\frac{\tilde M(ar; \rho_A)}{\sigma_d (ar)^{d}} \geq 0.
$$
Observe that $\cfrac{d\tilde M(ar; \rho_A)}{\sigma_d (ar)^{d}}$ equals the average of $\rho_A*\Delta V$ in the ball $\{|x|\leq ar\}$.
 By Proposition~\ref{stat_sol_regularity_decreasing}, $\rho_A$ (hence $\rho_A*\Delta V$) is radially decreasing, and thus we have
\begin{equation}\label{bound_for_M}
C_1 \leq \frac{\tilde M(ar; \rho_A)}{\sigma_d (ar)^{d}} \leq C_2 \quad \text{in }\{\rho_A>0\},
\end{equation}
where $C_1$, $C_2$ only depend on $A, d, m, V$. This gives a lower bound for the inward velocity field $v$
\begin{equation}\label{ineq_v}
 v(r) \geq C_1 a^d(1-a^{d(m-2+2/d)})r.
 \end{equation}
We will use the above estimate to construct a subsolution $\phi(r,t)$ of \eqref{pde_for_m}. Let us define
\begin{equation}
\phi(r,t) = k^d(t)~{\rho_A}\big(k(t)r\big), \label{def_phi}
\end{equation} where the scaling factor $k(t)$ solves the following ODE
with initial data $k(0)=a$:
\begin{equation}
k'(t) = C_1 (k(t))^{d+1}(1-(k(t))^{d(m-2+2/d)}).\label{ode}
\end{equation}
Since $m>2-2/d$,  $k'(t) > 0$ when $0<k<1$, and since $k=1$ is the only non-zero stationary point for the ODE \eqref{ode}, for $0<k(0)<1$ we have $\lim_{t\to\infty} k(t)=1$.  Since 
$$
C_1 k^{d}(1-k^{d(m-2+2/d)}) = -C_1 d(m-2+2/d) (1-k) + o(1-k),
$$ it follows that
\begin{equation}\label{estimate_k}
0 \leq 1-k(t) \lesssim e^{-C_1 d(m-2+2/d)t},
\end{equation}
which implies
\begin{equation} \label{estimate_m_phi}
0 \leq M(r;\rho_A) - M_\phi(r,t) \lesssim e^{-C_1 d(m-2+2/d)t}.
\end{equation}

 Next we claim that $\phi$ is a subsolution of \eqref{pde_for_m}, i.e.,
\begin{equation}
\frac{\partial M_{\phi}}{\partial t} \leq \sigma_d r^{d-1}\frac{\partial}{\partial r} \Big((\frac{\partial M_{\phi}}{\partial r} \frac{1}{\sigma_d r^{d-1}})^m\Big)  + \Big(\frac{\partial M_{\phi}}{\partial r} \frac{1}{\sigma_d r^{d-1}}\Big) \tilde M_{\phi}  \quad\hbox{ in } \{\phi>0\}.\label{ineq_m_phi}
\end{equation}
To prove the claim,  first note that by definition of $\phi(r,t)$  we have $M_{\phi}(r,t) = {M}(k(t)r; \rho_A)$.
Hence, due to \eqref{ode} and definition of $\phi$,  the left hand side of \eqref{ineq_m_phi} can be written as 
\begin{eqnarray}
\nonumber\frac{\partial M_\phi}{\partial t}(r,t) &=& \partial_r M(k(t)r;\rho_A)~k'(t)r\\
&=& \sigma_d r^d{\rho_A}(k(t)r) k^{d-1}(t) k'(t)=\sigma_d r^d \phi(r,t) C_1 k^d(1-k^{d(m-2+2/d)}).
\end{eqnarray}

On the other hand, we can proceed in the same way as \eqref{ineq_v}, replacing $a$ by $k$, to obtain
$$
\frac{m}{m-1} \frac{\partial}{\partial r} \phi^{m-1} + \frac{\tilde M_\phi}{\sigma_d r^{d-1}} \geq C_1 k^d(1-k^{d(m-2+2/d)})r.
$$
Therefore
\begin{eqnarray*}
\nonumber\text{RHS of \eqref{ineq_m_phi}} \nonumber&=& \sigma_d r^{d-1}\frac{\partial}{\partial r} \phi^m  + \phi \tilde M_{\phi} \\
\nonumber&=& \sigma_d r^{d-1} \phi\big( \frac{m}{m-1} \frac{\partial}{\partial r} \phi^{m-1} + \frac{\tilde M_\phi}{\sigma_d r^{d-1}}\big)\\
&\geq& \sigma_d r^{d}
\phi C_1 k^d(1-k^{d(m-2+2/d)}),\end{eqnarray*}
thus $M_\phi$ indeed satisfies \eqref{ineq_m_phi}, and the claim is proved.  

4. Similarly one can construct a supersolution of \eqref{pde_for_m}. Let us define
\begin{equation*}
\eta(r,t) := k^d(t)~{\rho_A}\big(k(t)r\big), \label{def_eta}
\end{equation*} where $k(t)$ solves the following ODE
with initial data $k(0)=\frac{1}{a}$:
\begin{equation*}
k'(t) = C_2 k^{d+1}(1-k^{d(m-2+2/d)}),
\end{equation*}
where $C_2$ is defined in \eqref{bound_for_M}. Arguing parallel to those in as in step 3 yields that
 $\eta$ is a supersolution of \eqref{pde_for_m} and
\begin{equation}\label{estimate_m_eta}
0 \leq M_\eta(r, t) - M(r; \rho_A) \lesssim e^{-C_2 d(m-2+2/d)t}, \hbox{ for all } r >0.
\end{equation}

5. Lastly we compare $\phi, \eta$ with the weak solution $\rho$ of \eqref{pde}. Since
$$\phi(\cdot, 0) \prec \rho(\cdot, 0) \prec \eta(\cdot, 0) \quad\text{(see Figure \ref{initial_data_compare})},$$
\begin{figure}[h]
\centerline{
\epsfysize=1.8in
\epsfbox{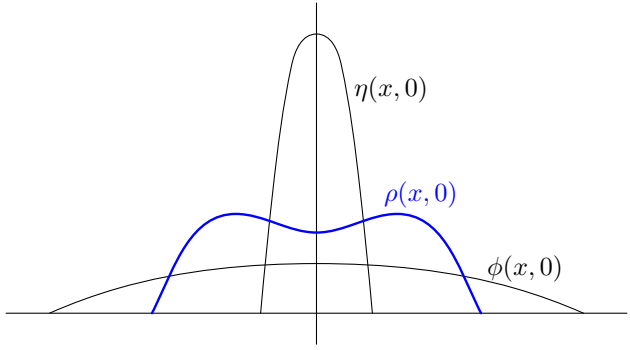}
}
\vspace{-0.3cm}
\caption{Initial data for $\phi$, $\rho$ and $\eta$}
\label{initial_data_compare}
\end{figure}

Proposition~\ref{comp_concentration} yields that
\begin{equation}
M_\phi(\cdot, t) \leq M(\cdot, t) \leq M_\eta(\cdot ,t). \label{compare_M}
\end{equation}
By \eqref{estimate_m_phi} and \eqref{estimate_m_eta}, we obtain
$$|M(r, t) - M(r; \rho_A)| \lesssim e^{-C_1 d(m-2+2/d)t} \hbox{ for } r\geq 0.$$
\end{proof}

Using the explicit subsolution and supersolution constructed in the proof of Theorem \ref{exp_conv}, we get exponential convergence of $\rho/A$ towards $\rho_A/A$ in the $p$-Wasserstein metric, which is defined below. Note that the Wasserstein metric is natural for this problem, since as pointed out in \cite{ags} and \cite{cmv}, the equation \eqref{pde} is a gradient flow of the 
energy \eqref{energy} with respect to the 2-Wasserstein metric. 
\begin{definition} \label{def_wasserstein}
Let $\mu_1$ and $\mu_2$ be two (Borel) probability measure on $\mathbb{R}^d$ with finite $p$-th moment. Then the \emph{$p$-Wasserstein distance} between $\mu_1$ and $\mu_2$ is defined as
$$
W_p(\mu_1,\mu_2) := \Big(\inf_{\pi\in \mathcal{P}(\mu_1, \mu_2)} \Big\{ \int_{\mathbb{R}^d\times\mathbb{R}^d} |x-y|^p \pi(dxdy)\Big\} \Big)^\frac{1}{p},$$
where $\mathcal{P}(\mu_1, \mu_2)$ is the set of all probability measures on $\mathbb{R}^d\times \mathbb{R}^d$ with first marginal $\mu_1$ and second marginal $\mu_2$.\end{definition}
\begin{corollary}\label{wasserstein} 
Let $\rho, \rho_A, A, C_1, \lambda$ be as given in Theorem \ref{exp_conv}. Then for all $p>1$, we have
$$W_p\Big(\frac{\rho(\cdot,t)}{A}, \frac{\rho_A}{A}\Big) \leq C_1 e^{-\lambda t}.$$
\end{corollary}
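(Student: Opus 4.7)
The plan is to reduce the $p$-Wasserstein distance to a one-dimensional estimate on the inverse mass functions, and then use the explicit sub- and supersolutions $\phi$ and $\eta$ constructed in the proof of Theorem~\ref{exp_conv} to obtain the exponential decay.

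First I would introduce, for any radial nonnegative measure $\mu$ on $\R^d$ with total mass $A$, the (generalized) inverse mass function $r_\mu(s) := \inf\{r \geq 0 : M(r;\mu)\geq s\}$ for $s \in [0,A]$. With $U$ uniform on $[0,A]$ and $\Theta$ uniform on $\mathbb{S}^{d-1}$ independent of $U$, the random variable $X_i := r_{\mu_i}(U)\Theta$ has law $\mu_i/A$. Since $X_1$ and $X_2$ lie on the same ray through the origin, $|X_1-X_2|=|r_{\mu_1}(U)-r_{\mu_2}(U)|$, so this radial coupling yields the admissible transport bound
$$
W_p^p\!\left(\frac{\mu_1}{A},\frac{\mu_2}{A}\right) \leq \mathbb{E}\,|X_1-X_2|^p = \frac{1}{A}\int_0^A |r_{\mu_1}(s)-r_{\mu_2}(s)|^p\, ds.
$$
Note that we do not need this coupling to be optimal; we only need it as an admissible plan.

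Next I apply this estimate to $\mu_1=\rho(\cdot,t)$ and $\mu_2=\rho_A$. From the construction in Theorem~\ref{exp_conv}, $\phi(x,t)=k(t)^d\rho_A(k(t)x)$ and $\eta(x,t)=\tilde k(t)^d\rho_A(\tilde k(t)x)$, so $M_\phi(r,t)=M(k(t)r;\rho_A)$ and $M_\eta(r,t)=M(\tilde k(t)r;\rho_A)$. Their inverse mass functions are therefore $r_A(s)/k(t)$ and $r_A(s)/\tilde k(t)$, where $r_A := r_{\rho_A}$. The mass sandwich $M_\phi(\cdot,t)\leq M(\cdot,t)\leq M_\eta(\cdot,t)$ from \eqref{compare_M} translates, via monotonicity of the inverse mass function, into
$$
\frac{r_A(s)}{\tilde k(t)} \leq r_{\rho(\cdot,t)}(s) \leq \frac{r_A(s)}{k(t)}.
$$

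Combining this with the exponential estimate \eqref{estimate_k} and its analogue for $\tilde k$, we get $|r_{\rho(\cdot,t)}(s)-r_A(s)| \leq C\, r_A(s)\, e^{-\lambda t}$, with $\lambda$ depending only on $A,m,d,V$ (and $C$ on the initial data through the choice of $a$ made in Step~2 of Theorem~\ref{exp_conv}). Since $\rho_A$ is compactly supported by Proposition~\ref{stat_sol_regularity_decreasing}, $r_A(s)$ is uniformly bounded on $[0,A]$, so
$$
W_p^p\!\left(\frac{\rho(\cdot,t)}{A},\frac{\rho_A}{A}\right) \leq \frac{C^p e^{-p\lambda t}}{A}\int_0^A r_A(s)^p\, ds \leq C_1^p\, e^{-p\lambda t},
$$
and taking $p$-th roots finishes the proof. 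The main obstacle is the first step: making the reduction to the one-dimensional inverse-mass estimate rigorous, in particular handling the fact that the mass functions need not be strictly increasing (gaps in the support of $\rho(\cdot,t)$), which is why one works with the generalized inverse $r_\mu(s)=\inf\{r: M(r;\mu)\geq s\}$ rather than a genuine inverse. Once that is in place, the decay is essentially a direct consequence of the exponential convergence of the scaling factors $k(t),\tilde k(t)\to 1$ already extracted in Theorem~\ref{exp_conv}.
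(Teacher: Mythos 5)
Your argument is correct and is essentially the paper's: the paper also builds a radial transport using the inverse mass function $T(x,t)=\frac{x}{|x|}M^{-1}(M(|x|;\rho_A),t)$ and bounds $|T(x,t)-x|$ between the corresponding quantities for $\phi$ and $\eta$ via the mass sandwich \eqref{compare_M}, which after the change of variables $s=M(|x|;\rho_A)$ gives exactly your one-dimensional integral $\frac{1}{A}\int_0^A|r_{\rho(\cdot,t)}(s)-r_A(s)|^p\,ds$. The only difference is cosmetic: you phrase it as a coupling $(r_{\mu_1}(U)\Theta, r_{\mu_2}(U)\Theta)$ and are careful to use the generalized inverse (a small refinement over the paper, which writes $M^{-1}$ without addressing plateaus), whereas the paper phrases it as a Monge map.
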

\begin{proof}
Before proving the corollary, we state some properties for Wasserstein distance, which can be found in \cite{vi}.  For two probability densities $f_0$, $f_1$ on $\mathbb{R}^d$, the $p$-Wasserstein distance between them coincides with the solution of Monge's optimal mass transportation problem.  Namely,
\begin{equation}W_p(f_1, f_0) = \Big( \inf_{T\#f_0 = f_1} \int_{\mathbb{R}^d} f_0(x)|x-T(x)|^p dx\Big)^{\frac{1}{p}}, \label{monge}
\end{equation}
where $T$ is a map from $\mathbb{R}^d$ to $\mathbb{R}^d$, and $T\#f_0 = f_1$ stands for ``the map $T$ transports $f_0$ onto $f_1$'', in the sense that for all bounded continuous function $h$ on $\mathbb{R}^d$,
$$\int_{\mathbb{R}^d} h(x) f_1(x) dx = \int_{\mathbb{R}^d} h(T(x)) f_0(x) dx.$$

Let $\phi$ be the subsolution constructed in the proof of Theorem \ref{exp_conv}, where we proved the radius of $\text{supp} ~\phi$ converges to the radius of $\text{supp} ~\rho_A$ exponentially in time. Note that $\phi(\cdot, t)$ is a rescaling of $\rho_A$, hence the convergence of support implies that there is a map $T_\phi(\cdot, t)$ transporting $\phi(\cdot, t)$ onto $\rho_A$ with $\sup_{x\in\{\rho(\cdot,t)>0\}}|x-T_{\phi}(x)|$ decaying exponentially in time.  Once we find such $T_\phi$, we can use \eqref{monge} to show that $W_p(\phi(\cdot, t), \rho_A)$ decays exponentially.

Without loss of generality, we assume the mass $A=1$ for the rest of the proof to avoid dividing $\rho$ and $\rho_A$ by $A$ every time. The transport map $T_\phi$ can be explicitly constructed as
$$T_\phi(x,t) = \frac{x}{|x|} M_\phi^{-1}\big(M(|x|;\rho_A), t\big).$$
Recall that $\phi$ is a rescaling of $\rho_A$, defined as $\phi(x, t) = (k(t))^d \rho_A(k(t)x)$, where $k(t)<1$ for all $t$, and $k(t)$ converges exponentially to 1. In this case the $T_\phi$ defined above can be greatly simplified as $T_\phi(x,t) = x/k(t)$. That gives us the following upper bound bound for $W_p(\phi(\cdot, t), \rho_A)$:
$$W_p(\phi(\cdot, t), \rho_A) \leq \Big(\int_{\mathbb{R}^d} \rho_A|x-\frac{x}{k(t)}|^p dx\Big)^{\frac{1}{p}} \leq R (1-\frac{1}{k(t)}),$$
where $R$ is the radius of support of $\rho_A$. Due to the estimate of $k(t)$ in \eqref{estimate_k}, we obtain the exponential decay
$$W_p(\phi(\cdot, t), \rho_A) \leq C_1 e^{-\lambda t}.$$
  We can apply the same argument to the supersolution $\eta(\cdot, t)$ as well.
  
To show that $W_p(\rho(\cdot, t), \rho_A)$ decays with the same rate, it is natural to consider the following map $T(\cdot, t)$ which transports $\rho_A$ onto $\rho(\cdot, t)$:
$$T(x,t) = \frac{x}{|x|} M^{-1}(M(|x|;\rho_A), t).$$
Then we have $|T(x,t) - x| = |M^{-1}(M(r;\rho_A), t) - r|$, where $r=|x|$.
Due to \eqref{compare_M}, we have 
$$M_\phi^{-1} \geq M^{-1}\geq M_\eta^{-1},$$ which gives
$$|T(x,t) - x| \leq \max\{|T_\phi(x,t)-x|, |T_\eta(x,t)-x|\}.$$
Hence we conclude that
$$W_p(\rho(\cdot, t), \rho_A) \leq W_p(\phi(\cdot, t), \rho_A)+W_p(\eta(\cdot, t), \rho_A) \leq C_1 e^{-\lambda t}.$$
\end{proof}
In fact one can also obtain the uniform convergence of $\rho(\cdot,t)$ to $\rho_A$ in sup-norm, however the convergence rate would depend on the modulus of continuity of $\rho$.
Theorem \ref{exp_conv} and the uniform continuity of $\rho$ and $\rho_A$, as well as the fact that $\rho_A$ is compactly supported,  yield the following: 
\begin{corollary}\label{cor:convergence}
Let $\rho$,$\rho_A$, $C_1$ and $\lambda$ be as given in Theorem~\ref{exp_conv}. Then we have
$$
\lim_{t\to\infty} \|\rho(x,t) - \rho_A(x)\|_{L^\infty(\R^d)} =0.
$$
\end{corollary}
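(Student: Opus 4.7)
The plan is to combine the exponential mass-function decay from Theorem~\ref{exp_conv} with the equicontinuity supplied by Theorem~\ref{unif_cont}(a), arguing by contradiction. Abusing notation, write $\rho(r,t)$ and $\rho_A(r)$ for the radial profiles, and set $g(r,t):=\rho(r,t)-\rho_A(r)$. By Corollary~\ref{stay_in_compact_set} and the compact support of $\rho_A$ there exists $R>0$ with $\{\rho(\cdot,t)>0\}\cup\{\rho_A>0\}\subset B_R(0)$ for all $t\geq 0$, so $g(\cdot,t)$ is supported in $[0,R]$. By Theorem~\ref{unif_cont}(a) applied on $\R^d\times[1,\infty)$ together with Proposition~\ref{stat_sol_regularity_decreasing}(a), the family $\{g(\cdot,t)\}_{t\geq 1}$ is equicontinuous on $[0,R]$.

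Suppose for contradiction that $\|\rho(\cdot,t)-\rho_A\|_{L^\infty(\R^d)}\not\to 0$. Then there exist $\varepsilon>0$, times $t_n\to\infty$, and radii $r_n\in[0,R]$ with $|g(r_n,t_n)|\geq \varepsilon$. Equicontinuity produces a $\delta>0$ (independent of $n$) such that $g(r,t_n)$ retains the sign of $g(r_n,t_n)$ and satisfies $|g(r,t_n)|\geq \varepsilon/2$ for all $r\in[\max(r_n-\delta,0),\,r_n+\delta]$.

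The next step is to convert this pointwise lower bound into an integral bound, and compare with the mass estimate. Define $A_n:=B_{r_n+\delta}(0)\setminus B_{r_n-\delta}(0)$ if $r_n\geq\delta$, and $A_n:=B_{2\delta}(0)$ if $r_n<\delta$. In either case $|A_n|\geq c(\delta,d)>0$, and since $g(\cdot,t_n)$ has constant sign on $A_n$,
\[
\Bigl|\int_{A_n} g(|y|,t_n)\,dy\Bigr|\;\geq\;\tfrac{\varepsilon}{2}\,c(\delta,d).
\]
On the other hand, writing $A_n=B_{r_2}(0)\setminus B_{r_1}(0)$ for the appropriate $0\leq r_1<r_2$, this integral equals $[M(r_2,t_n)-M(r_2;\rho_A)]-[M(r_1,t_n)-M(r_1;\rho_A)]$, which by Theorem~\ref{exp_conv} is bounded in absolute value by $2C_1 e^{-\lambda t_n}\to 0$. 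Passing to the limit yields the desired contradiction.

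The only subtlety is the degenerate case $r_n\to 0$, where the spherical shell $B_{r_n+\delta}\setminus B_{r_n-\delta}$ has no uniform lower volume bound; this is why I would replace it by the ball $B_{2\delta}(0)$ in that regime. Beyond that, the argument is a standard equicontinuity-plus-integral-comparison step, and I do not anticipate serious obstacles.
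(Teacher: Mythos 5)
Your proposal follows exactly the route the paper indicates (the paper gives only a one-sentence justification: combine Theorem~\ref{exp_conv}, uniform continuity of $\rho$ and $\rho_A$, and compact support of $\rho_A$), and you correctly flesh out that argument: equicontinuity turns a hypothetical $L^\infty$ deviation of size $\varepsilon$ into a signed bound $|g|\geq\varepsilon/2$ on a shell of fixed width, whose volume is uniformly bounded below, contradicting the exponentially small difference of mass functions from Theorem~\ref{exp_conv}.

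There is one small slip in the degenerate case $r_n<\delta$. You replace the annulus by $A_n=B_{2\delta}(0)$, but equicontinuity only gives that $g(\cdot,t_n)$ has constant sign and $|g|\geq\varepsilon/2$ on $B_{r_n+\delta}(0)$, which is a strict subset of $B_{2\delta}(0)$; on the remainder $B_{2\delta}(0)\setminus B_{r_n+\delta}(0)$ the sign of $g$ is uncontrolled, so the claim that $g$ has constant sign on $A_n$ is unjustified and the integral lower bound could fail. The fix is to take $A_n:=B_{r_n+\delta}(0)$ in this regime, which still has $|A_n|\geq \omega_d\,\delta^d$. (Incidentally, your stated worry about the shell $B_{r_n+\delta}\setminus B_{r_n-\delta}$ lacking a uniform lower volume bound for $r_n\geq\delta$ is unfounded: its volume $\omega_d[(r_n+\delta)^d-(r_n-\delta)^d]$ is increasing in $r_n$ and so is at least $\omega_d(2\delta)^d$; the genuine issue arises only when $r_n<\delta$ forces one endpoint to $0$.) A minor citation point: for equicontinuity of $g$ you also need continuity of $\rho_A$ across its free boundary, which follows from the stationary relation $\rho_A^{m-1}=\tfrac{m-1}{m}(C-\rho_A*V)_+$ rather than from Proposition~\ref{stat_sol_regularity_decreasing}(a) alone (which only asserts smoothness inside the support). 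With these small repairs the argument is complete.
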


Note that uniqueness of $\rho_A$ is not required in the proof of Theorem \ref{exp_conv}. Indeed, uniqueness of $\rho_A$ follows from the asymptotic convergence of $\rho$: if there are two radial stationary solutions $\rho_A^1$ and $\rho_A^2$ with the same mass, Corollary \ref{cor:convergence} implies $\rho(\cdot, t) \to \rho_A^i$ in $L^\infty$ norm for $i=1,2$ when $\rho$ is given as in Theorem \ref{exp_conv}. This 
immediately establishes the uniqueness of the radial stationary solution.
\begin{corollary} \label{cor:uniqueness}
Let V be given by (A) or (B), and let $m>2-\frac{2}{d}$. Then for all $A>0$, the radial stationary solution $\rho_A$ for \eqref{pde} with $\int \rho_A(x) dx=A$ is unique.
\end{corollary}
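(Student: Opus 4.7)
The plan is to extract uniqueness as a direct corollary of the asymptotic convergence established in Theorem~\ref{exp_conv} (and Corollary~\ref{cor:convergence}). The essential observation is that the proof of Theorem~\ref{exp_conv} never uses uniqueness of the stationary solution: it picks any radial stationary profile $\rho_A$ of mass $A$, rescales it to produce a subsolution $\phi$ and a supersolution $\eta$ of \eqref{pde_for_m}, and concludes that $M(r,t;\rho)$ is sandwiched between $M_\phi$ and $M_\eta$, both of which converge exponentially to $M(\cdot;\rho_A)$. So the conclusion is really: for every radial stationary solution $\rho_A$ with mass $A$, the mass function $M(r,t;\rho)$ of the evolution converges to $M(r;\rho_A)$.

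The argument then runs as follows. Fix any one admissible radial initial datum $\rho_0$ of mass $A$ (for instance a continuous, radially symmetric, compactly supported bump with $\rho_0(0)>0$), and let $\rho(x,t)$ be the corresponding weak solution of \eqref{pde}, which exists globally and is uniformly bounded by \cite{brb}. Suppose, towards a contradiction, that $\rho_A^1$ and $\rho_A^2$ are two radial stationary solutions with $\int \rho_A^i\, dx = A$. Apply Theorem~\ref{exp_conv} once with the chosen $\rho_A$ equal to $\rho_A^1$ and once with it equal to $\rho_A^2$: each application yields
\[
\bigl|M(r,t;\rho) - M(r;\rho_A^i)\bigr| \leq C_i\, e^{-\lambda_i t}\qquad (i=1,2).
\]
Sending $t\to\infty$ and using uniqueness of pointwise limits gives $M(r;\rho_A^1) = M(r;\rho_A^2)$ for every $r\geq 0$, hence $\rho_A^1 \equiv \rho_A^2$.

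There is essentially no serious obstacle here; the only point that needs verification is the remark above that Theorem~\ref{exp_conv} applies with \emph{any} chosen stationary profile, which is immediate from an inspection of its proof. Alternatively, one may derive $L^\infty$ convergence from Corollary~\ref{cor:convergence} to get $\rho_A^1 = \lim_{t\to\infty}\rho(\cdot,t) = \rho_A^2$ directly, bypassing mass functions altogether. In either formulation the argument is a one-line consequence of the long-time dynamics, which is exactly the advantage of the dynamical approach over the ODE method of \cite{ly} that was unavailable for potentials of type (B).
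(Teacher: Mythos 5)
Your proof is correct and is essentially the same argument the paper uses: the authors likewise note that Theorem~\ref{exp_conv} does not rely on uniqueness of $\rho_A$, pick a single evolving solution $\rho$, invoke the asymptotic convergence (the paper cites Corollary~\ref{cor:convergence} for the $L^\infty$ version, which is the alternative you mention at the end), and conclude uniqueness by uniqueness of limits. The one preliminary remark worth keeping explicit — and which you do include — is that the sub/supersolution construction in Theorem~\ref{exp_conv} works for \emph{any} radial stationary profile of mass $A$, so both hypothetical stationary solutions can be fed through the same machinery.
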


\subsection{Algebraic convergence towards Barenblatt profile in the supercritical regime}
In this section, we consider the asymptotic behavior of radial solutions in the supercritical regime, i.e. for $1<m< 2-\frac{2}{d}$. In this case the diffusion overrides the aggregation and thus the solution is expected to behave similar to that of Porous Medium Equation (PME) in the long run. In fact
recently it was shown in \cite{b1} (and also in \cite{s1}), by making use of entropy method as well as functional inequalities, that the solution of (1.1) with a general class of $V$  and with small mass and small $L^{(2-m)d/2}$ norm converges to the self-similar Barenblatt solution $\mathcal{U}(x,t)$ with algebraic rate,
\begin{equation}\label{barenblatt}
\mathcal{U}(x,t) = t^{-\beta d} (C - \frac{(m-1)\beta}{2m}|x|^2 t^{-2\beta})_+^{\frac{1}{m-1}},
\end{equation}
where $C$ is some constant such that $\|\mathcal{U}(\cdot, 0)\|_1 = \|\rho(\cdot, 0)\|_1$.

Here we will give a complementary result to \cite{b1} and \cite{s1} in the case of radial solutions, by using mass comparison  (Proposition~\ref{comp_concentration}). We point out that in our result the mass does not need to be small, and we provide an explicit description of solutions which are ``sufficiently scattered" so that they do not blow up in finite time.  Of course the method presented in \cite{b1} is much more delicate and yields optimal convergence results for general solutions with small mass in the supercritical regime.

Let $\rho$ be the weak solution to \eqref{pde}. Following \cite{v}, we re-scale $\rho$ as follows:
\begin{equation}\label{def_mu}
\mu(\lambda, \tau) = (t+1)^\alpha \rho(x,t);\quad \lambda = x(t+1)^{-\beta};\quad \tau=\ln (t+1),
\end{equation}
where
\begin{equation*}
\alpha=\frac{d}{d(m-1)+2}, ~~\beta = \alpha/d.
\end{equation*}
Then $\mu(\lambda, 0) = \rho(x,0)$, and $\mu(\lambda,\tau)$ is a weak solution of 
\begin{equation}
\mu_\tau = \Delta \mu^m + \beta \nabla\cdot(\mu \nabla\frac{|\lambda|^2}{2}) + e^{(1-\alpha)\tau} \nabla \cdot(\mu \nabla(\mu*(\mathcal{N}*\tilde{h}(\lambda, \tau))), \label{after_scaling}
\end{equation}
where
\begin{equation}\label{def_tilde_h}
\tilde{h}(\lambda, \tau) := e^{d\beta \tau}\Delta V(\lambda e^{\beta \tau}).
\end{equation}
(When $V=\mathcal{N}$ one should replace the last term by $e^{(1-\alpha)\tau} \nabla \cdot(\mu \nabla(\mu*\mathcal{N})$.)

In the absense of the last term, equation \eqref{after_scaling} is a Fokker-Plank equation
\begin{equation}
\mu_\tau = \Delta \mu^m + \beta \nabla\cdot(\mu \nabla\frac{|\lambda|^2}{2}) , \label{fokker_planck}
\end{equation}
 which is known to converge to the stationary solution $\mu_A$  exponentially, where $\mu_A$ has the mass $A := \| \mu(\cdot, 0)\|_1$ and satisfies
 \begin{equation}
 \frac{m}{m-1}\mu_A^{m-1} = (C-\beta \frac{|\lambda|^2}{2})_+ \hbox{ for some } C>0 . \label{stat_sol_as_tau_to_infty}
 \end{equation}

In Therorem \ref{exp_conv_m<2}, we will prove for $m< 2-2/d$, if the initial data is sufficiently less concentrated than $\mu_A$, then $\mu(\cdot,\tau)$ also converges to the same limit $\mu_A$ exponentially as $\tau\to\infty$. We begin by defining the following mass functions: 
$$
M^\mu(r, \tau) :={M}(r,\tau;\mu) \hbox{ and }\mathcal{\tilde M}(r, \tau; f) := \int_{B(0,r)} f * \tilde{h}(\cdot, \tau) d\lambda, 
$$ where ${M}$ is as given in \eqref{mass}, $\tilde h$ is as given in \eqref{def_tilde_h}, and $f$ is an arbitrary function.  Note that for $V=\mathcal{N}$, $\tilde{h}(\cdot, \tau)$ is the delta function for all $\tau$, hence $\mathcal{\tilde M} \equiv M$.

 Then $M^\mu$ satisfies the following PDE in the positive set of $\mu$:
\begin{equation} 
M^\mu_\tau = \sigma_d r^{d-1} (\frac{\partial M^\mu}{\partial r} \frac{1}{\sigma_d r^{d-1}}) \Big[\frac{m}{m-1}\frac{\partial}{\partial r}( (\frac{\partial M^\mu}{\partial r} \frac{1}{\sigma_d r^{d-1}})^{m-1}) + \beta r+ e^{(1-\alpha)\tau}\frac{\mathcal{\tilde M}(r,\tau;\mu)}{\sigma_d r^{d-1}} \Big]\label{pde_for_m_after_scaling}
\end{equation}
We first check that the mass comparison holds for re-scaled equations:
 \begin{proposition}  Let $V(x)$ be given by (A) or (B), and let $m< 2-\frac{2}{d}$. Assume $\mu_1(\lambda,\tau)$ is a subsolution and $\mu_2(\lambda,\tau)$ is a supersolution of \eqref{pde_for_m_after_scaling}. Further assume that
$\int \mu_1(\cdot, \tau) d\lambda$ and $\int \mu_2(\cdot, \tau) d\lambda$ stay constant for all $t\geq 0.$  Then the mass is ordered for all times, i.e.,
$$
\hbox{ if } \mu_1(\lambda,0) \prec \mu_2(\lambda,0), \hbox{ then we have } \mu_1(\lambda,\tau) \prec \mu_2(\lambda,\tau)\hbox{ for all } \tau>0.
$$ \label{comp_concentration_after_scaling}
\end{proposition}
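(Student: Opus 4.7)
The plan is to mimic the proof of Proposition~\ref{comp_concentration} via a maximum-principle argument on the difference of mass functions, with two adjustments to account for the rescaled equation: the extra Fokker--Planck drift $\beta r$ and the time-dependent prefactor $e^{(1-\alpha)\tau}\tilde{\mathcal{M}}$ in \eqref{pde_for_m_after_scaling}. Set $M_i(r,\tau):=M(r,\tau;\mu_i)$ and define, for $\lambda>0$ to be chosen,
$$
w(r,\tau):=\bigl(M_1(r,\tau)-M_2(r,\tau)\bigr)e^{-\lambda\tau}.
$$
The hypothesis $\mu_1(\cdot,0)\prec\mu_2(\cdot,0)$, conservation of the total masses, and the fact that $M_i(0,\tau)=0$ give $w\leq0$ at $\tau=0$ and on the spatial boundaries. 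Assuming for contradiction that the claim fails, $w$ attains a positive maximum at some $(r_1,\tau_1)$ with $\tau_1>0$ and $r_1>0$, and this point must lie in the common positive set of $\mu_1$ and $\mu_2$ so that $M_i$ is $C^{2,1}$ there.

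The critical-point conditions yield the analogues of \eqref{compare_t}--\eqref{compare_rr}:
$$
\partial_\tau(M_1-M_2)\geq\lambda(M_1-M_2),\qquad \partial_r M_1=\partial_r M_2,\qquad \partial_{rr}M_1\leq\partial_{rr}M_2
$$
at $(r_1,\tau_1)$. Writing $\rho_*:=\partial_r M_1/(\sigma_d r^{d-1})=\partial_r M_2/(\sigma_d r^{d-1})$ at this point, the Fokker--Planck drift term $\beta r\cdot\rho_*$ cancels when we subtract the subsolution inequality for $M_1$ from the supersolution inequality for $M_2$. For the diffusion term, because $\partial_r M_1=\partial_r M_2$ and $\partial_{rr}M_1\leq\partial_{rr}M_2$ at $(r_1,\tau_1)$, the same computation as in the proof of Proposition~\ref{comp_concentration} shows
$$
\frac{\partial}{\partial r}\Bigl(\rho_1^{m-1}-\rho_2^{m-1}\Bigr)\Big|_{(r_1,\tau_1)}\leq 0,
$$
so the diffusive contribution has a favorable sign and may be discarded. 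What remains is
$$
\partial_\tau(M_1-M_2)\leq \rho_*\,e^{(1-\alpha)\tau_1}\,\bigl(\tilde{\mathcal{M}}_1-\tilde{\mathcal{M}}_2\bigr)(r_1,\tau_1).
$$

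The main remaining point is to bound the nonlocal difference by $M_1-M_2$ uniformly in $\tau$. Two observations handle this. First, because $\tilde h(\lambda,\tau)=e^{d\beta\tau}\Delta V(\lambda e^{\beta\tau})$ is a mass-preserving rescaling of $\Delta V$, we have $\|\tilde h(\cdot,\tau)\|_{L^1}=\|\Delta V\|_{L^1}$ for all $\tau\geq0$, and $\tilde h(\cdot,\tau)$ inherits radial monotonicity from $\Delta V$. Hence the bump-function argument used in \eqref{estimate1001} (or Lemma~\ref{integral_lemma}) gives
$$
\bigl(\tilde{\mathcal{M}}_1-\tilde{\mathcal{M}}_2\bigr)(r_1,\tau_1)\leq \|\Delta V\|_{L^1}(M_1-M_2)(r_1,\tau_1),
$$
with the analogous (and easier) identity for case (A) where $\tilde{\mathcal{M}}\equiv M$. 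Second, in the supercritical regime $1<m<2-2/d$ we have $\alpha=d/(d(m-1)+2)>1$, so $1-\alpha<0$ and $e^{(1-\alpha)\tau_1}\leq 1$. Combining,
$$
\partial_\tau(M_1-M_2)\bigr|_{(r_1,\tau_1)}\leq \|\mu_1\|_{L^\infty(\mathbb{R}^d\times[0,T])}\,\|\Delta V\|_{L^1}\,(M_1-M_2)(r_1,\tau_1),
$$
which contradicts $\partial_\tau(M_1-M_2)\geq\lambda(M_1-M_2)>0$ once we choose $\lambda>\|\mu_1\|_{L^\infty}\|\Delta V\|_{L^1}$. The hardest (though still mild) step is keeping track of the time-dependent rescaling of the convolution kernel; this is precisely where the condition $m<2-2/d$ is used, to ensure the time-dependent prefactor stays bounded uniformly on $[0,T]$.
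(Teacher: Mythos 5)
Your proof is correct, but it takes a genuinely different route from the paper's. The paper disposes of this proposition in three lines: it observes that under the self-similar change of variables $\mu(\lambda,\tau)=(t+1)^\alpha\rho(x,t)$, $\lambda=x(t+1)^{-\beta}$, $\tau=\ln(t+1)$, a sub/supersolution of \eqref{pde_for_m_after_scaling} corresponds exactly to a sub/supersolution of \eqref{pde_for_m}, and that the mass functions transform by $M(r,\tau;\mu_i)=e^{(\alpha-\beta)\tau}M(re^{\beta\tau},e^\tau;\rho_i)$, so the conclusion follows directly from Proposition~\ref{comp_concentration}. You instead rerun the maximum-principle argument from scratch in the rescaled variables, checking separately that the Fokker--Planck drift $\beta r$ cancels at the extremum (since $\partial_r M_1=\partial_r M_2$ there), that the diffusion term keeps the favorable sign, and that $\tilde{\mathcal{M}}_1-\tilde{\mathcal{M}}_2$ is controlled by $\|\Delta V\|_1\,(M_1-M_2)$ because $\tilde h(\cdot,\tau)$ is a mass-preserving, radially decreasing rescaling of $\Delta V$. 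Both arguments are valid; the paper's is shorter and transfers the entire burden to the already-proved Proposition~\ref{comp_concentration}, while yours is self-contained and makes visible where each new term in the rescaled equation goes. One small inaccuracy in your commentary: the condition $m<2-2/d$ (hence $\alpha>1$, $e^{(1-\alpha)\tau}\le 1$) is convenient but not actually needed for the comparison -- on any finite time interval the prefactor is bounded anyway and one may take $\lambda$ depending on $T$, and the paper's change-of-variables proof never touches this factor at all. The restriction $m<2-2/d$ in the statement reflects the regime where the Fokker--Planck rescaling is useful, not a requirement of the comparison argument.
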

\begin{proof}
Let $\rho_i(x,t)$ be the corresponding re-scaled versions of $\mu_i$. Then $\rho_1$ and $\rho_2$  are respectively a subsolution and a supersolution of \eqref{pde_for_m}. The proof then follows from  Proposition~\ref{comp_concentration} and from the fact that
$$
M(r,\tau; \mu_i) =e^{(\alpha-\beta)\tau} M(r e^{\beta\tau}, e^{\tau};\rho_i).
$$
\end{proof}
Next we state a technical lemma which is used later in the proof of the convergence theorem. The proof is in the appendix.
\begin{lemma}\label{technical}
Let $k(t)$ solve the ODE
\begin{equation}
k'(t) = C_1 k(1-k^{\alpha}) + C_2 k^{d+1}e^{-\beta t}, \label{ode_a}
\end{equation}
where $C_1, C_2, \alpha, \beta$ are positive constants.  Then there exists a constant $\delta>0$ such that if $0<k(0)<\delta$, then $k(t)\to 1$ exponentially as $t\to\infty$. \label{a_conv_exp}
\end{lemma}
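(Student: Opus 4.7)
The plan is to analyze the ODE in two stages: first, confine $k(t)$ to the compact interval $[\tfrac12,2]$ for all sufficiently large $t$, provided $\delta$ is chosen small enough; second, on this range, use a Lyapunov function to extract exponential convergence to $1$. I expect the main obstacle to be the upper bound $k(t)\le 2$: the perturbation $C_2 k^{d+1}e^{-\beta t}$ is superlinear in $k$ and could in principle drive $k$ to blow up in finite time, so the argument must genuinely combine the smallness of the initial datum (to delay growth) with the time decay of the perturbation (to make the ODE ``turn inward'' at the barrier before the initial growth phase overtakes it).

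For the upper bound, I would run a barrier argument at the level $k=2$. Evaluating the right-hand side of \eqref{ode_a} at $k=2$ gives $-2C_1(2^\alpha-1)+C_2\cdot 2^{d+1}e^{-\beta t}$, which is strictly negative once $t>t_*:=\tfrac{1}{\beta}\log\tfrac{C_2\cdot 2^d}{C_1(2^\alpha-1)}$. On the other hand, while $k(t)\le 2$ we have $k'(t)\le (C_1+C_2\cdot 2^d e^{-\beta t})k(t)$, so Gronwall yields $k(t)\le \delta\exp(C_1 t+C_2\cdot 2^d/\beta)$; hence the first hitting time $T_\delta:=\inf\{t:k(t)=2\}$ satisfies $T_\delta\ge\tfrac{1}{C_1}\bigl[\log(2/\delta)-C_2\cdot 2^d/\beta\bigr]$, which exceeds $t_*$ provided $\delta$ is small enough. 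If $T_\delta$ were finite, then at $(T_\delta,2)$ one would have $k'(T_\delta)<0$ from the ODE yet $k'(T_\delta)\ge 0$ from the approach to $2$ from below, a contradiction; thus $k(t)<2$ for all $t$. The lower bound $k(t)\ge \tfrac12$ from some finite time $T_0$ onward follows from a simpler sign argument: whenever $k\le \tfrac12$, \eqref{ode_a} gives $k'\ge C_1 k(1-(1/2)^\alpha)>0$, so $k$ reaches $\tfrac12$ in finite time and cannot drop back below.

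Having trapped $k(t)\in[\tfrac12,2]$ for $t\ge T_0$, I would introduce $L(t):=(k(t)-1)^2$ and compute
$$L'(t)=-2C_1 k(k-1)(k^\alpha-1)+2C_2(k-1)k^{d+1}e^{-\beta t}.$$
On $[\tfrac12,2]$ the quotient $(k^\alpha-1)/(k-1)$ extends continuously to $\alpha$ at $k=1$ and is thus bounded below by a positive constant $c_0$, so the first term is at most $-C_1 c_0 L(t)$. Bounding the cross term by $2C_2\cdot 2^{d+1}|k-1|e^{-\beta t}$ and applying AM--GM with weight $C_1 c_0/2$, one obtains $L'(t)\le -\tfrac{C_1c_0}{2}L(t)+C_3 e^{-2\beta t}$ for a constant $C_3=C_3(C_1,C_2,c_0,d)$. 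Standard comparison with the linear equation $y'=-\tfrac{C_1c_0}{2}y+C_3 e^{-2\beta t}$ then yields $L(t)\le C_4 e^{-\lambda(t-T_0)}$ with $\lambda:=\min(C_1 c_0/2,\,2\beta)>0$, so $|k(t)-1|$ decays exponentially and $k(t)\to 1$ exponentially fast.
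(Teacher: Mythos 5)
Your argument is correct, and it follows a genuinely different route from the paper's. Both proofs share the same opening move: use the smallness of $k(0)$ together with a Gronwall estimate to guarantee that $k$ stays small long enough for the perturbation $C_2 k^{d+1}e^{-\beta t}$ to decay before $k$ can escape. From there the two approaches diverge. The paper constructs an explicit supersolution $g(t)=1+e^{-\epsilon(t-t_1)}$ (with $\epsilon=\min\{\beta,\tfrac12 C_1\alpha\}$) starting at the time $t_1$ when $k$ may first reach $1$, and an explicit subsolution by dropping the nonnegative perturbation and comparing with the autonomous ODE $g'=C_1 g(1-g^\alpha)$; the exponential rate is read off directly from these barriers. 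You instead trap $k(t)\in[\tfrac12,2]$ for large $t$ by a two-sided barrier argument and then run a Lyapunov estimate on $L=(k-1)^2$, obtaining $L'\le -\lambda_0 L+C_3 e^{-2\beta t}$ and concluding by linear comparison. What the paper's method buys is a fully explicit $\delta$ and a clean closed-form supersolution with no need for the compactness step; what your method buys is robustness — the Lyapunov/Gronwall step does not require guessing the form of the barrier, needs only that $(k^\alpha-1)/(k-1)$ is bounded below on a compact interval (true for every $\alpha>0$), and would extend with no change to more general decaying perturbations. One small thing to keep in mind: after confining $k$ to $[\tfrac12,2]$ for $t\ge T_0$, your exponential bound is of the form $C_4 e^{-\lambda(t-T_0)}$, where $T_0$ itself depends on $\delta$; this is fine for the qualitative statement of the lemma, but it means the constant in front of the exponential depends on $k(0)$, just as in the paper's version.
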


Now we are ready to prove the main theorem. We will first prove it for radially decreasing solutions.

\begin{theorem} \label{exp_conv_m<2}
Let $V(x)$ be given by (A) or (B),  let  $1<m< 2-\frac{2}{d}$ and let $\mu_A$ be as given in \eqref{stat_sol_as_tau_to_infty}. Suppose $\mu_0(\lambda)$ is radially decreasing, compactly supported and has mass $A$. Then there exists a constant $\delta>0$ depending on $d, m, \mu_0$ and $V$, such that if  $$\mu_0(\lambda) \prec \delta^d \mu_A(\delta \lambda),$$  then the weak solution $\mu(\lambda, \tau)$ of \eqref{after_scaling} with initial data $\mu_0$ exists for all $\tau>0$. Furthermore,  $M(r,\tau;\mu)$ defined in \eqref{mass} converges to  $M(r,\tau;\mu_A)$ exponentially as $t\to\infty$ and uniformly in $r$.  \label{exp_conv_after_scaling}
\end{theorem}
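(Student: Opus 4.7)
The strategy parallels the proof of Theorem~\ref{exp_conv}: I will construct a subsolution $\phi$ and a supersolution $\eta$ of the rescaled mass PDE \eqref{pde_for_m_after_scaling}, both as time-dependent rescalings $k^d(\tau)\mu_A(k(\tau)\lambda)$ of the Barenblatt profile, and sandwich $M^\mu$ between $M_\phi$ and $M_\eta$ via Proposition~\ref{comp_concentration_after_scaling}. The new features compared with the subcritical case are the extra Fokker--Planck drift $\beta\nabla\cdot(\mu\lambda)$, which actually \emph{helps}, and the rescaled aggregation term, which carries the decaying factor $e^{(1-\alpha)\tau}$ precisely because $\alpha=d/(d(m-1)+2)>1$ in the supercritical range $m<2-2/d$.

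\emph{Supersolution.} Set $\eta(\lambda,\tau):=k(\tau)^d\mu_A(k(\tau)\lambda)$ with $k(0)=\delta$. Using the Barenblatt identity $\tfrac{m}{m-1}\partial_s\mu_A^{m-1}(s)=-\beta s$ on $\{\mu_A>0\}$, a direct substitution shows that the supersolution form of \eqref{pde_for_m_after_scaling} reduces to
\[
\frac{k'(\tau)}{k(\tau)} \;\geq\; \beta\bigl(1-k(\tau)^{d(m-1)+2}\bigr) \;+\; e^{(1-\alpha)\tau}\,\frac{\tilde{\mathcal M}(r,\tau;\eta)}{\sigma_d r^d},\qquad r\in\bigl(0,R_A/k(\tau)\bigr).
\]
The aggregation term appears singular as $r\to 0$, but since $\eta*\tilde h(\cdot,\tau)$ is bounded, Young's inequality gives the uniform estimate $\tilde{\mathcal M}(r,\tau;\eta)/(\sigma_d r^d)\leq \|\eta*\tilde h\|_\infty/d\leq k(\tau)^d\mu_A(0)\|\Delta V\|_1/d$. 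It therefore suffices to let $k$ solve
\[
k'(\tau) \;=\; \beta k\bigl(1-k^{d(m-1)+2}\bigr) \;+\; C_V\, k^{d+1}\, e^{-(\alpha-1)\tau},\qquad k(0)=\delta,
\]
with $C_V:=\mu_A(0)\|\Delta V\|_1/d$. This fits exactly the form of Lemma~\ref{technical} (with parameters $d(m-1)+2>0$ and $\alpha-1>0$), which provides a threshold $\delta_*>0$ such that for $\delta<\delta_*$ we have $k(\tau)\to 1$ exponentially. Combined with the compact support and Lipschitz regularity of $\mu_A$, this gives $|M_\eta(r,\tau)-M(r;\mu_A)|=|M(k(\tau)r;\mu_A)-M(r;\mu_A)|\lesssim e^{-\lambda\tau}$ uniformly in $r$.

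\emph{Subsolution.} Set $\phi(\lambda,\tau):=\ell(\tau)^d\mu_A(\ell(\tau)\lambda)$ with $\ell(0)=\ell_0\in(0,\delta]$ small enough that $\phi(\cdot,0)\prec\mu_0$. Such $\ell_0$ exists because $\mu_0$ is radially decreasing and compactly supported with mass $A$, hence $\mu_0(0)>0$: comparing the leading $r^d$-expansions at the origin (governed by the peak heights $\ell_0^d\mu_A(0)$ vs.\ $\mu_0(0)$), using $M_{\phi_0}\leq A = M_{\mu_0}$ beyond $\mathrm{supp}(\mu_0)$, and the continuity of the mass functions on the intermediate regime, one checks $M_{\phi(\cdot,0)}\leq M_{\mu_0}$ throughout for all sufficiently small $\ell_0$. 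Since the aggregation term contributes nonnegatively to the right-hand side of \eqref{pde_for_m_after_scaling}, the subsolution inequality is the easier $\ell'/\ell\leq\beta(1-\ell^{d(m-1)+2})$, so one may simply solve
\[
\ell'(\tau) \;=\; \beta\ell(1-\ell^{d(m-1)+2}),\qquad \ell(0)=\ell_0,
\]
yielding $\ell(\tau)\to 1$ exponentially and, by the same Lipschitz estimate on $\mu_A$, $M_\phi(\cdot,\tau)\to M(\cdot;\mu_A)$ exponentially and uniformly in $r$.

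\emph{Conclusion and main obstacle.} Applying Proposition~\ref{comp_concentration_after_scaling} to the pairs $(\phi,\mu)$ and $(\mu,\eta)$ yields $M_\phi(r,\tau)\leq M^\mu(r,\tau)\leq M_\eta(r,\tau)$ for all $\tau\geq 0$, $r>0$, and the sandwich gives the claimed exponential, uniform-in-$r$ convergence. Global existence is built into the bound: by Theorem~\ref{decreasing} the solution $\mu(\cdot,\tau)$ stays radially decreasing, so $\|\mu(\cdot,\tau)\|_\infty=\mu(0,\tau)$, and passing to $r\to 0$ in $M^\mu(r,\tau)/r^d\leq M_\eta(r,\tau)/r^d$ gives $\mu(0,\tau)\leq\eta(0,\tau)=k(\tau)^d\mu_A(0)$, which is uniformly bounded. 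The principal technical hurdle is the supersolution step: the nonlocal aggregation term is \emph{a priori} singular in $r$, and only the uniform $L^\infty$ bound on $\eta*\tilde h$ converts it into a tractable term driving the ODE for $k$. The supercritical assumption $m<2-2/d$ enters precisely here through the sign of $1-\alpha$: without it the factor $e^{(1-\alpha)\tau}$ would grow, destroying the ODE comparison supplied by Lemma~\ref{technical}.
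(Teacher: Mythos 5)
Your proof is correct and follows essentially the same strategy as the paper: construct a subsolution and a supersolution of the rescaled mass PDE as time-rescalings $k^d(\tau)\mu_A(k(\tau)\lambda)$ of the Barenblatt profile, reduce the super/subsolution inequalities to ODEs for $k(\tau)$, invoke Lemma~\ref{technical} for the supersolution (where the key point is $\alpha>1$ so the aggregation term decays), sandwich $M^\mu$ via Proposition~\ref{comp_concentration_after_scaling}, and obtain global existence from the $L^\infty$ bound $\mu(0,\tau)\le\eta(0,\tau)$ using radial monotonicity from Theorem~\ref{decreasing}. The only cosmetic difference is that you control the aggregation term $\tilde{\mathcal M}(r,\tau;\eta)/(\sigma_d r^d)$ by Young's inequality $\|\eta*\tilde h\|_\infty\le\|\eta\|_\infty\|\Delta V\|_1$, whereas the paper uses a rearrangement-type bound $\tilde{\mathcal M}(r,\tau;\eta)\le\|\Delta V\|_1 M(k r;\mu_A)$; both give the same estimate $O(k^d r^d)$ and feed into the identical ODE for $k$.
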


\begin{proof}The proof of theorem is analogous to that of Theorem~\ref{exp_conv}: we will construct a self-similar subsolution $\phi(\lambda, \tau)$ and supersolution $\eta(\lambda, \tau)$ to \eqref{after_scaling}, both of which converge to $\mu_A$ exponentially.

Observe that \eqref{after_scaling} can be written as a transport equation
$$
\mu_t + \nabla \cdot (\mu \vec{v})=0,$$
where the \emph{velocity field} $\vec{v}$ is given by
$$\vec{v} :=  \frac{m}{m-1}\nabla( \mu^{m-1}) + \beta \lambda+ e^{(1-\alpha)\tau}\nabla(\mu*(\mathcal{N}*\tilde{h}(y, \tau)).$$
Hence the inward velocity field $v(r, \tau; \mu):= -\vec v \cdot \frac{x}{|x|}$ for the rescaled PDE \eqref{after_scaling} is
\begin{eqnarray*}
v(r, \tau; \mu) = \frac{m}{m-1}\frac{\partial}{\partial r}( \mu^{m-1}) + \beta r+ e^{(1-\alpha)\tau}\frac{\mathcal{ \tilde M}(r,\tau; \mu)}{\sigma_d r^{d-1}}.
\end{eqnarray*}
We first construct a  subsolution $\phi(\lambda, \tau)$ with the scaling factor $k(\tau)$ to be determined later:
$$\phi(\lambda, \tau) := k^d(\tau)\mu_A\big(k(\tau)\lambda\big).$$ 
Since $\mu_A$ satisfies \eqref{stat_sol_as_tau_to_infty}, the inward velocity field of $\phi$ is then given by
\begin{eqnarray*}
v(r,\tau; \phi) = (1-k^{d(m-1)+2})\beta r +  e^{(1-\alpha)\tau}\frac{\mathcal{\tilde M}(r, \tau;\phi)}{\sigma_d r^{d-1}}.
\end{eqnarray*}
Note that the last term of $v(r,\tau;\phi)$ is always non-negative, and thus $v(r,\tau; \phi) \geq (1-k^{d(m-1)+2})\beta r$. That motivates us to choose $k(\tau)$ to be the solution of the following equation
 \begin{equation}
k'(\tau) = \beta k(1-k^{d(m-1)+2}),
\end{equation}
with initial data $k(0)$ sufficiently small such that $\phi(\cdot, 0) \prec \mu_A$ and $\phi(\cdot, 0) \prec \mu(\cdot, 0)$. One can proceed as in the proof of Theorem~\ref{exp_conv} to verify $\phi$ is indeed a subsolution. Moreover, it can be easily checked that $k(\tau) \to 1$ exponentially as $\tau\to\infty$, hence $M(r,\tau;\phi)$ converges to  $M(r;\mu_A)$ exponentially as $\tau\to \infty$ and uniformly in $r$. 

Next we turn to the construction of a supersolution of the form 
$$
\eta(\lambda, \tau) := k^d(\tau)\mu_A\big(k(\tau)\lambda\big).
$$
 Here the main difficulty comes from the aggregation term, which might cause finite time blow-up of the solution.  To find an upper bound of the inward velocity field, we first need to control $\mathcal{\tilde M}(r,\tau, k^d \mu_A(k\lambda))$:
\begin{eqnarray*}
\mathcal{\tilde M}(r, \tau; k^d \mu_A(k\lambda)) &=& \int_{B(0,r)} k^d \mu_A(k \cdot) * e^{d\beta \tau} \Delta V(e^{\beta \tau} \cdot) (\lambda) d\lambda\\
&\leq& \|\Delta V\|_1 \int_{B(0,r)} k^d \mu_A(k\lambda) d \lambda\\
&=& \|\Delta V\|_1 \int_{B(0,kr)} \mu_A(\lambda) d \lambda~ \leq C(kr)^d/\sigma_d,
\end{eqnarray*}
where the first inequality is due to Riesz's rearrangement inequality and the fact that $\mu_A$ is radially decreasing.$C$ is some constant that does not depend on $k, r, \tau$.

The above inequality gives the following upper bound for the inward velocity field of $\eta$:
$$v(r,\tau; \eta) \leq (1-k^{d(m-1)+2})\beta r + C k^d e^{(1-\alpha)\tau} r.$$
Therefore if we let $k(t)$ solve the following ODE
\begin{equation}\label{ode_k}
k'(\tau) = \beta k(1-k^{d(m-1)+2}) + Ck^{d+1}  e^{(1-\alpha)\tau},
\end{equation}
and choose the initial data $k(0)$ such that $\mu(\cdot,0) \prec \eta(\cdot, 0) = k^d(0)\mu_A\big(k(0)\lambda\big) $, then $\eta$ would be a supersolution to  \eqref{after_scaling}.

  Let us choose $k(0) = \delta$, where $\delta$ is given in the assumption of this theorem. Due to Lemma \ref{a_conv_exp},  $k(\tau)\to 1$ exponentially as $\tau\to \infty$ when $\delta$ is sufficiently small, hence  it follows that $M(r,\tau;\eta)$ converges to  $M(r;\mu_A)$ exponentially.  

Since the supersolution $\eta$ exists globally, we claim that the weak solution $\mu$ exists globally as well.  Suppose not: then due to Theorem 4 of \cite{brb}, $\mu$ has a maximal time interval of existence $T^*$, and $\lim_{\tau\nearrow T^*} \| \mu(\cdot, \tau)\|_\infty = \infty$.  On the other hand, Proposition \ref{comp_concentration_after_scaling} yields that 
\begin{equation}
\mu(\cdot, \tau) \prec \eta(\cdot, \tau) \text{ for all }\tau<T^*.
\end{equation}  Note that Proposition \ref{decreasing} implies that $\mu$ is radially decreasing for all $\tau < T^*$,  which gives
\begin{equation}
\| \mu(\cdot, \tau)\|_\infty \leq \|\eta(\cdot, \tau)\|_\infty  \text{ for all }\tau<T^*.
\end{equation} 
The above inequality implies that $\lim_{\tau\nearrow T^*} \| \eta(\cdot, \tau)\|_\infty = \infty$, which contradicts the fact that $\|\eta(\cdot, \tau)\|_\infty$ is uniformly bounded for all $\tau$.

Once we have the global existence of $\mu$, Proposition \ref{comp_concentration_after_scaling} yields that
 $$\phi(\cdot, \tau) \prec \mu(\cdot, \tau) \prec 
\eta(\cdot, \tau) \text{ for all }\tau\geq 0. $$ Since both $\phi$ and $\eta$ converge exponentially towards $\mu_A$ as $\tau\to\infty$, we can conclude.
\end{proof}

The following generalization of Theorem~\ref{exp_conv_m<2} will be proved in Section 6. 

\begin{corollary}\label{exp_conv_m<2_corollary}
Let $V(x)$ be given by (A) or (B), and  $1<m< 2-\frac{2}{d}$. For a nonnegative function $\mu_0$ in $L^1(\mathbb{R}^d)$, define $A := \int \mu_0(\lambda) d\lambda$, and let $\mu_A(\lambda)$ be as given in \eqref{stat_sol_as_tau_to_infty}. Then the following holds:
\begin{itemize}
\item[(a)] there exists a small constant $\delta>0$ depending on $d, m, \mu_0$ and $V$, such that if
 $$\mu_0^*(\lambda) \prec \delta^d \mu_A(\delta \lambda),$$  then the weak solution $\mu(\lambda, \tau)$ of \eqref{after_scaling} with initial data $\mu_0$ exists for all $\tau>0$. 
 \item[(b)] Let $\mu_0$ is as given in (a) and also is radially symmetric and compactly supported, then $M(r,\tau;\mu)$ defined in \eqref{mass} converges to  $M(r,\tau;\mu_A)$ exponentially as $\tau\to\infty$ and uniformly in $r$.  
 \end{itemize}
 \label{exp_conv_after_scaling}
\end{corollary}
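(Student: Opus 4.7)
The plan is to handle (a) by reducing to the already-established Theorem~\ref{exp_conv_m<2} via the rearrangement comparison from Theorem~\ref{rearrangement_thm_intro}(a), and then to prove (b) by sandwiching the mass function $M(r,\tau;\mu)$ between two mass-comparison bounds that each converge exponentially to $M(r;\mu_A)$ uniformly in $r$.

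For part (a), first observe that the symmetric decreasing rearrangement $\mu_0^*$ is radially decreasing with $\int\mu_0^*=A$. Combined with the hypothesis $\mu_0^*\prec \delta^d\mu_A(\delta\lambda)$ and the fact that $\delta^d\mu_A(\delta\lambda)$ is compactly supported with total mass $A$, this forces $\mu_0^*$ to be supported in the same ball (its mass function must saturate at $A$ no later than the support radius of the right-hand side). Applying Theorem~\ref{exp_conv_m<2} with $\mu_0^*$ in place of the original initial datum then yields global existence of the rescaled solution $\bar\mu(\lambda,\tau)$. Pulling back to original variables, the solution $\bar\rho(x,t)$ of \eqref{pde} with datum $\mu_0^*$ exists for all $t\geq 0$ and is uniformly $L^\infty$-bounded on each compact time interval. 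The rearrangement comparison Theorem~\ref{rearrangement_thm_intro}(a) then gives $\|\rho(\cdot,t)\|_\infty\leq \|\bar\rho(\cdot,t)\|_\infty<\infty$, and the blow-up alternative from \cite{brb} promotes this to global existence for $\rho$, hence for $\mu$.

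For part (b), with $\mu_0$ additionally radially symmetric and compactly supported, I trap $M(r,\tau;\mu)$ from above and below. The upper bound uses Hardy--Littlewood: since $\mu_0$ is radial, $\mu_0\prec\mu_0^*$, so Proposition~\ref{comp_concentration_after_scaling} gives $\mu(\cdot,\tau)\prec\bar\mu(\cdot,\tau)$ for all $\tau\geq 0$, and Theorem~\ref{exp_conv_m<2} applied to $\bar\mu$ furnishes $M(r,\tau;\bar\mu)-M(r;\mu_A)\leq C_1e^{-\lambda_1\tau}$ uniformly in $r$. For the lower bound, let $\psi$ solve the pure porous medium equation $\psi_t=\Delta\psi^m$ starting from $\mu_0$; because $\Delta V\geq 0$, the aggregation term makes a non-negative contribution to the right-hand side of \eqref{pde_for_m}, so $\psi$ is a subsolution of \eqref{pde_for_m}. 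Under the self-similar rescaling its rescaled version $\tilde\psi$ solves the nonlinear Fokker--Planck equation \eqref{fokker_planck} and remains a subsolution of \eqref{pde_for_m_after_scaling}; since $\tilde\psi(\cdot,0)=\mu_0=\mu(\cdot,0)$, Proposition~\ref{comp_concentration_after_scaling} yields $\tilde\psi(\cdot,\tau)\prec\mu(\cdot,\tau)$ for all $\tau\geq 0$. The classical entropy-dissipation theory for \eqref{fokker_planck} (quoted in the paper) provides $\|\tilde\psi(\cdot,\tau)-\mu_A\|_{L^1}\leq C_2 e^{-\lambda_2\tau}$, hence $|M(r,\tau;\tilde\psi)-M(r;\mu_A)|\leq C_2 e^{-\lambda_2\tau}$ uniformly in $r$. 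Sandwiching delivers the claimed uniform exponential convergence at rate $\min(\lambda_1,\lambda_2)$.

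The main obstacle is the lower bound: the subsolution approach of Theorem~\ref{exp_conv_m<2} via $\phi=k^d\mu_A(k\,\cdot)$ fails when $\mu_0$ vanishes in a neighborhood of the origin (a shell-like datum), since for such $\mu_0$ no rescaled stationary profile is less concentrated than $\mu_0$. The porous medium subsolution $\tilde\psi$ is a clean replacement because it automatically satisfies $\tilde\psi(\cdot,0)=\mu_0$, but it forces us to invoke the exponential convergence of the nonlinear Fokker--Planck equation as a black box. An alternative that avoids this external input is to first wait for a finite time $\tau_1$ at which $\mu(0,\tau_1)>0$ (guaranteed by comparing from below with a solution of \eqref{pme}, as in the last step of the proof of Corollary~\ref{stay_in_compact_set}), and then rerun the subsolution construction of Theorem~\ref{exp_conv_m<2} from time $\tau_1$ onward.
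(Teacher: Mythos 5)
Your strategy for part (a) coincides with the paper's: symmetrize to $\mu_0^*$, apply Theorem~\ref{exp_conv_m<2} to get a globally defined, uniformly bounded, radially decreasing comparison solution $\bar\mu$, and then use the rearrangement $L^\infty$-comparison (Corollary~\ref{lp_compare}) together with the blow-up alternative from \cite{brb} to conclude $\mu$ exists globally. However the parenthetical justification that $\mu_0^*\prec\delta^d\mu_A(\delta\lambda)$ ``forces $\mu_0^*$ to be supported in the same ball'' is wrong: the relation $\prec$ only gives $M(r;\mu_0^*)\leq M(r;\delta^d\mu_A(\delta\cdot))$, and at the support radius $R_\delta$ this reads $M(R_\delta;\mu_0^*)\leq A$, which is vacuous and does not force saturation; $\mu_0^*$ can carry a positive tail outside $B(0,R_\delta)$. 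This is immaterial in the end -- what the proof of Theorem~\ref{exp_conv_m<2} actually uses is that $\mu_0^*$ is radially decreasing (hence has positive center value), has mass $A$, and is dominated in the $\prec$ sense by $\delta^d\mu_A(\delta\cdot)$ -- but the incorrect claim should simply be dropped.

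For part (b) your route genuinely differs from the paper's, and in an interesting way. The paper's one-line proof is to rerun the sub/supersolution construction of Theorem~\ref{exp_conv_m<2}, implicitly waiting a finite rescaled time until $\mu(0,\cdot)>0$ if the radially symmetric datum vanishes near the origin (exactly as in step~2 of Corollary~\ref{stay_in_compact_set}). You instead replace the lower barrier $k^d\mu_A(k\cdot)$ by the rescaled pure porous-medium flow $\tilde\psi$ started from $\mu_0$, which is automatically a subsolution of \eqref{pde_for_m_after_scaling} because the neglected drift term is nonnegative ($\Delta V\geq 0$), exactly as the paper argues for the unrescaled equation. Combined with the upper envelope $\mu\prec\bar\mu$ obtained via Hardy--Littlewood ($\mu_0\prec\mu_0^*$) and mass comparison, and the observation that $|M(r;\tilde\psi)-M(r;\mu_A)|\leq\|\tilde\psi-\mu_A\|_{L^1}$ converts an $L^1$ rate into a uniform-in-$r$ mass rate, the sandwich delivers the conclusion. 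The advantage of your route is that it handles shell-like data (where $\mu_0(0)=0$, so no rescaled $\mu_A$ can be placed below $\mu_0$ at time zero) without a separate waiting-time argument. The cost is that it imports a quantitative exponential $L^1$-convergence result for the Fokker--Planck flow \eqref{fokker_planck} that the paper only cites qualitatively, so the argument is no longer self-contained within the paper's own estimates. Your closing alternative -- wait until $\mu(0,\tau_1)>0$ by comparison with a PME solution, then run the Theorem~\ref{exp_conv_m<2} construction from $\tau_1$ -- is precisely what the paper has in mind and keeps everything internal; either version is correct.
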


If we rescale back to the original space and time variables, Theorem \ref{exp_conv_m<2} immediately yields the algebraic convergence of mass function for the solution to \eqref{pde}.
\begin{corollary}\label{cor:above}
Let $V, m, \mu$ and $\mu_0$ be as given in Corollary \ref{exp_conv_m<2_corollary}, and let $\rho$ be given by \eqref{def_mu}.   Let $\mathcal{U}(x,t)$ be as given in \eqref{barenblatt}.  Then $\rho$ is a weak solution to \eqref{pde}, and $\rho$ vanishes to zero as $t\to\infty$ with algebraic decay.  In particular if $\rho_0$ is radially symmetric then 
\begin{itemize}
\item[(a)] $|M(r,t)- M(r,t; \mathcal{U})| \leq Ct^{-\gamma}, \text{ for all } r\geq 0,$ for some $C, \gamma$ depending on $\rho_0, m, d$ and $V$. 
\item[(b)] for all $p>1$ we have
$$W_p(\frac{\rho(\cdot,t)}{A}, \frac{\mathcal{U}(\cdot, t)}{A}) \leq C t^{-\gamma},$$
where $C, \gamma$ depend on $\rho(x,0), m, d$ and $V$.
\end{itemize}
\end{corollary}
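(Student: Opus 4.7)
The proof follows by pulling back the exponential convergence in the rescaled variables to algebraic convergence in the original variables, exploiting the fact that the Barenblatt profile $\mathcal{U}$ is precisely the self-similar solution corresponding to the stationary $\mu_A$ of \eqref{fokker_planck} under the rescaling \eqref{def_mu}.

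First I would observe that the rescaling \eqref{def_mu} is a bijection between weak solutions of \eqref{pde} and weak solutions of \eqref{after_scaling}, so $\rho$ given by inverting \eqref{def_mu} automatically solves \eqref{pde}. The algebraic decay $\|\rho(\cdot,t)\|_\infty \to 0$ is immediate from $\|\rho(\cdot,t)\|_\infty = (t+1)^{-\alpha}\|\mu(\cdot,\tau)\|_\infty$ together with the uniform bound on $\|\mu(\cdot,\tau)\|_\infty$ (which follows from the exponential convergence $\mu\to\mu_A$ and boundedness of $\mu_A$).

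For part (a), I would note that, after adjusting the constant $C$ in \eqref{barenblatt} so that $\|\mathcal{U}(\cdot,t)\|_1 = A$, the Barenblatt profile corresponds under \eqref{def_mu} to the stationary profile $\mu_A$; that is, the function $\nu_\mathcal{U}(\lambda,\tau):=(t+1)^\alpha\mathcal{U}(x,t)$ equals $\mu_A(\lambda)$, because \eqref{barenblatt} is self-similar with exponents $(\alpha,\beta)$ and $\mu_A$ is precisely the stationary profile \eqref{stat_sol_as_tau_to_infty}. A direct change of variables then gives
\[
M(r,t;\rho)=M(r(t+1)^{-\beta},\tau;\mu),\qquad M(r,t;\mathcal{U})=M(r(t+1)^{-\beta};\mu_A),
\]
so Corollary~\ref{exp_conv_m<2_corollary}(b) yields $|M(r,t;\rho)-M(r,t;\mathcal{U})| \leq Ce^{-\lambda\tau}=C(t+1)^{-\lambda}$ uniformly in $r$, which is the desired estimate with $\gamma = \lambda$.

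For part (b), the key observation is that $\rho(\cdot,t)/A$ is the pushforward of $\mu(\cdot,\tau)/A$ under the dilation $\lambda\mapsto (t+1)^\beta\lambda$, and likewise for $\mathcal{U}/A$ and $\mu_A/A$. Since the $p$-Wasserstein distance scales linearly with length under dilations,
\[
W_p\!\left(\tfrac{\rho(\cdot,t)}{A},\tfrac{\mathcal{U}(\cdot,t)}{A}\right) = (t+1)^{\beta}\, W_p\!\left(\tfrac{\mu(\cdot,\tau)}{A},\tfrac{\mu_A}{A}\right).
\]
I would then adapt the argument of Corollary~\ref{wasserstein}: the explicit subsolution $\phi=k^d\mu_A(k\cdot)$ and supersolution $\eta$ constructed in the proof of Theorem~\ref{exp_conv_m<2} are mere dilations of $\mu_A$, so the obvious transport maps $T(\lambda)=\lambda/k(\tau)$ give $W_p(\phi(\cdot,\tau),\mu_A)\lesssim |1-k(\tau)|$ and similarly for $\eta$. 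Since $\phi\prec\mu\prec\eta$, the same sandwiching as in Corollary~\ref{wasserstein} transfers this to $W_p(\mu(\cdot,\tau)/A,\mu_A/A)\lesssim |1-k_\phi(\tau)|+|1-k_\eta(\tau)|$.

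The main obstacle is rate-bookkeeping: I need the rescaled-time rate $\lambda_W$ of $W_p(\mu,\mu_A)\to 0$ to exceed $\beta$ so that the factor $(t+1)^\beta$ above is absorbed. Linearizing $k'=\beta k(1-k^{d(m-1)+2})$ at $k=1$ gives $1-k(\tau)\sim e^{-\tau}$ because $\beta(d(m-1)+2)=1$; for the supersolution ODE \eqref{ode_k} the extra term $Ck^{d+1}e^{-(1-\alpha)\tau}$ decays exponentially and, by Lemma~\ref{technical}, does not destroy the rate. Since $\beta=1/(d(m-1)+2)<1$, we obtain $W_p(\rho(\cdot,t)/A,\mathcal{U}(\cdot,t)/A)\leq C(t+1)^{\beta-1}=C(t+1)^{-\gamma}$ with $\gamma=1-\beta>0$, completing the proof.
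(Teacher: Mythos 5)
Your proposal takes the same route as the paper---pull back the exponential rescaled-time rates from Theorem~\ref{exp_conv_m<2} to algebraic rates in original variables---but is considerably more explicit and careful. The paper's proof is a two-sentence sketch that addresses only part~(b) and simply says ``now we scale back,'' substituting $\tau=\ln(t+1)$ into $e^{-\gamma\tau}$ without mentioning that the spatial rescaling itself changes the Wasserstein distance. You correctly observe that since $\rho(\cdot,t)/A$ is the push-forward of $\mu(\cdot,\tau)/A$ under the dilation $\lambda\mapsto(t+1)^\beta\lambda$, one picks up a multiplicative factor $(t+1)^\beta$, and therefore positivity of the final rate $\gamma$ genuinely requires the rescaled Wasserstein rate $\lambda_W$ to exceed $\beta$; this is a real point that the paper glosses over. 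Your proof of part~(a) via the identity $M(r,t;\rho)=M(r(t+1)^{-\beta},\tau;\mu)$, which hinges on $\alpha=d\beta$, is clean and fills a gap the paper leaves entirely implicit.

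One place where your argument is thinner than it looks: you assert $\lambda_W\approx 1$ and that the extra term in the supersolution ODE \eqref{ode_k} ``does not destroy the rate, by Lemma \ref{technical}.'' In fact Lemma \ref{technical} only certifies the weaker rate $\epsilon=\min\{\alpha-1,\tfrac12\}$ for the supersolution's scaling factor, and the inequality $\alpha-1>\beta$ (i.e.\ $d(2-m)>3$, i.e.\ $m<2-3/d$) fails on a nonempty subinterval of the supercritical range $1<m<2-2/d$. So the comparison $\lambda_W>\beta$ is not automatic from the Lemma's stated conclusion; it needs either a sharper estimate on the overshoot of $k_\eta$ beyond $1$ (the linearization near $k=1$ gives $\min\{1,\alpha-1\}$, which still fails when $m\geq 2-3/d$) or a different choice of supersolution. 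This subtlety is shared with---indeed, hidden by---the paper's own ``scale back,'' since the paper never tracks the $(t+1)^\beta$ factor at all; but as your argument correctly identifies it as the crux, the rate bookkeeping deserves to be tightened rather than referred to Lemma \ref{technical} as stated.
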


\begin{proof}
From the proof of Theorem \ref{exp_conv_m<2}, we have $W_p(\frac{\mu(\cdot, \tau)}{A}, \frac{\mu_A}{A}) \lesssim e^{-\gamma \tau}$ for some $\gamma$ depending on $\rho(x,0)$, $m$, $d$ and $V$, and the proof is analogous with the proof of Corollary \ref{wasserstein}.  Now we scale back, and the above inequality becomes
$$W_p(\frac{\rho(, \tau)}{A}, \frac{\rho_A}{A}) \lesssim (t+1)^{-\gamma}.$$
\end{proof}

\section{A comparison principle for general solutions and Instant regularization in $L^\infty$}

In this section we consider general (non-radial) solutions of \eqref{pde}. Our goal is to prove the following result:

\begin{theorem}\label{rearrangement}
Suppose $m>1$. Let $V$ be given by (A) or (B), and let $\rho$ be the weak solution to \eqref{pde} with initial data $\rho(x,0) = \rho_0(x)$.  Let $\bar \rho$ be the weak solution to \eqref{pde} with initial data $\bar \rho(x,0) = \rho_0^*(x)$.  Assume $\bar \rho$ exists for $t\in[0,T)$, where $T$ may be infinite. Then $\rho^*(\cdot, t) \prec \bar\rho(\cdot, t)$ for all $t\in[0,T)$.
\end{theorem}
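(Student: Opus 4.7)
The plan is to adapt the rearrangement-comparison approach of V\'azquez \cite{v} (Chapter 10) for the PME to accommodate the nonlocal aggregation term. I would proceed in three steps: (i) regularize to obtain smooth, strictly positive solutions of a uniformly parabolic approximate equation, (ii) derive a differential inequality for the concentration function of the approximate solution, and (iii) invoke the mass comparison principle of Section 5 and pass to the limit.

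For step (i), I would replace the degenerate diffusion $\Delta \rho^m$ by $\Delta(\rho+\delta)^m$ and, in case (A), replace $\mathcal{N}$ by the mollification $\mathcal{N}*h^\epsilon$ as in Proposition~\ref{approximation}; simultaneously, I smooth and bound the initial data below by a strictly positive function. The resulting Cauchy problem is uniformly parabolic with smooth drift and admits a classical positive solution $\rho^\epsilon$ (and similarly $\bar\rho^\epsilon$ starting from the symmetrization of the smoothed $\rho_0$). Standard parabolic estimates together with Proposition~\ref{approximation} give $\rho^\epsilon \to \rho$ and $\bar\rho^\epsilon \to \bar\rho$ locally uniformly, and rearrangement continuity under $L^1$ convergence (noting that all solutions remain compactly supported on each time slice) yields $(\rho^\epsilon)^* \to \rho^*$.

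For step (ii), define
$$U^\epsilon(r,t) := \int_{B(0,r)} (\rho^\epsilon)^*(x,t)\,dx, \qquad \bar U^\epsilon(r,t) := \int_{B(0,r)} \bar\rho^\epsilon(x,t)\,dx,$$
and show that $U^\epsilon$ is a subsolution of the mass-PDE \eqref{pde_for_m} that $\bar U^\epsilon$ solves. Writing $U^\epsilon(r,t) = \int_0^\infty \min\bigl(|\{\rho^\epsilon(\cdot,t) > s\}|,\, |B(0,r)|\bigr)\,ds$ and differentiating, the diffusive contribution is handled by the classical Talenti--V\'azquez computation: the coarea formula represents the mass flux across a level set via a perimeter integral, and the isoperimetric inequality bounds this perimeter below by that of the corresponding ball, giving the upper bound $\sigma_d r^{d-1}\partial_r\bigl((\partial_r U^\epsilon/(\sigma_d r^{d-1}))^m\bigr)$ on the diffusive part of $\partial_t U^\epsilon$. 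For the aggregation part, since $V$ (and its mollification) is already radial and radially decreasing, Riesz's rearrangement inequality applied at the level of the inward flux $-\rho^\epsilon\nabla(\rho^\epsilon*V)\cdot\tfrac{x}{|x|}$ shows that replacing $\rho^\epsilon$ by its symmetrization can only increase the integrated inward flux through $\partial B(0,r)$, yielding the bound $(\partial_r U^\epsilon/(\sigma_d r^{d-1}))\,\tilde U^\epsilon$ with $\tilde U^\epsilon$ defined as in \eqref{tildem} applied to $(\rho^\epsilon)^*$.

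For step (iii), with $U^\epsilon$ a subsolution and $\bar U^\epsilon$ a solution of \eqref{pde_for_m} sharing the same total mass and agreeing at $r=0$, the maximum-principle argument of Proposition~\ref{comp_concentration} gives $U^\epsilon(r,t) \leq \bar U^\epsilon(r,t)$ for all $r$ and $t$. Letting $\epsilon,\delta\to 0$ and using the local uniform convergence from step (i), one obtains $\int_{B(0,r)} \rho^*(\cdot,t)\,dx \leq \int_{B(0,r)} \bar\rho(\cdot,t)\,dx$, i.e.\ $\rho^*(\cdot,t)\prec \bar\rho(\cdot,t)$. I expect the main obstacle to lie in step (ii): the diffusion half follows well-developed Talenti-type machinery, but the nonlocal flux requires a careful application of Riesz's inequality at the level of the radial derivative of the concentration rather than merely at the level of integrated energies, and it is important that the regularization preserves both radiality and radial monotonicity of $V$ so that Riesz's inequality applies without loss.
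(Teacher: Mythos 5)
Your proposal takes a genuinely different route from the paper. The paper does not attempt a direct-in-time Talenti-type computation at all: instead it (i) approximates \eqref{pde} by the auxiliary problem \eqref{pmedrift2} in which the drift $\nabla(f*V)$ is \emph{fixed a priori}, (ii) runs the Crandall--Liggett implicit time discretization (Proposition~\ref{itd_conv}), so that the rearrangement comparison only ever needs to be verified for the one-step \emph{elliptic} resolvent equation \eqref{one-step} (Proposition~\ref{rearrangement_comp_one_step}), and then (iii) recovers the original coupled drift by a Picard-type iteration $\rho_{i+1}$ driven by $\rho_i*V$, with Arzel\`a--Ascoli giving convergence to $\bar\rho$. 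You instead work directly with the time-dependent PDE: regularize, show the concentration function of $(\rho^\e)^*$ is a subsolution of the radial mass PDE \eqref{pde_for_m}, and invoke Proposition~\ref{comp_concentration}. If this can be carried through, it is actually shorter, because it bypasses the fixed-drift iteration entirely by feeding the coupled nonlocal term directly into the mass comparison principle (which already accommodates the self-consistent $\tilde M$).

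The place where I think you have underestimated the difficulty is not the Riesz step (which does work once you convert the boundary flux $\int_{\partial\Omega_t}\rho^\e\,\partial_n(\rho^\e*V)\,ds$ to a volume integral $\rho^\#(|B(0,r)|,t)\int_{\Omega_t}\rho^\e*\Delta V\,dx$ over a sup-achieving level set $\Omega_t$, and then compare with $\tilde U^\e$ over $B(0,r)$), but the identity
\begin{equation*}
\partial_t \int_{B(0,r)} (\rho^\e)^*(\cdot,t)\,dx \;=\; \int_{\Omega_t}\partial_t\rho^\e\,dx
\end{equation*}
on which the whole subsolution computation rests. Symmetric decreasing rearrangement does not commute with $\partial_t$, and this formula requires nondegeneracy of the level set at the relevant height (no plateaus, $|\{\rho^\e=s\}|=0$), differentiability of the distribution function $\mu(s,t)=|\{\rho^\e(\cdot,t)>s\}|$ in $t$, and a measurable selection of the maximizing set. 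These measure-theoretic issues do not disappear under your $\delta$- and $\e$-regularizations, and handling them rigorously (along the lines of Bandle, Alvino--Lions--Trombetti, Mossino--Rakotoson) is in fact the reason V\'azquez, and hence the paper, route through the implicit time scheme: for the elliptic one-step problem there is no $\partial_t$ to push through the rearrangement, and the comparison is done by the clean variational test-function argument of Proposition~\ref{rearrangement_comp_one_step}. So your plan is a viable alternative route, but the time-derivative step should be flagged as the principal technical gap, rather than the Riesz step, and you should either fill it with the Bandle-type machinery or fall back to the discrete-time scheme used in the paper.
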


As an application of Theorem \ref{rearrangement}, we will show that solutions of \eqref{pde} with initial data in $L^1$ immediately regularize in $L^\infty$ (see Proposition~\ref{instant}.)

The proof of Theorem~\ref{rearrangement}, which we divide into several subsections follows that of the corresponding theorem for solutions of \eqref{pme} (see Chapter 10 of \cite{v}). The theorem in \cite{v} is proved by taking the semi-group approach and applying the Crandall-Liggett Theorem. The challenge lies in the fact that our operator in \eqref{pde} is not a contraction, in either $L^1$ or $L^\infty$. For this reason the proof requires an additional approximation of our equation with one with fixed drift: see \eqref{pmedrift2}.

\subsection{Implicit Time Discretization for PME with drift}
Consider the following equation
\begin{equation}
\rho_t = \Delta \rho^m + \nabla \cdot (\rho\nabla \Phi), \label{pmedrift}
\end{equation}
where $\Phi$ is a function given \emph{a priori} such that $\Phi(x,t) \in C(\mathbb{R}^d \times [0,\infty))$, and $\Phi(\cdot,t) \in C^2(\mathbb{R}^d)$ for all $t$.

Following the proof in the case of \eqref{pme} in \cite{v}, we approximate \eqref{pmedrift} via an implicit discrete-time scheme.  For a small constant $h>0$, $U_i$ is recursively defined as the solution of the following equation:
\begin{equation}
\frac{U_i - U_{i-1}}{h} = \Delta U_i^m + \nabla \cdot (U_i \nabla \Phi_i),  ~~i=1,2,\ldots\label{itd}
\end{equation}
where $U_0 = u(\cdot,0), \Phi_i = \Phi(\cdot, ih)$. Now define
\begin{equation}\label{def_rho_h}
\rho_h(\cdot, t) := U_i(\cdot) \quad \text{for }(i-1)h< t\leq ih,~~  i=1,2,\ldots
\end{equation}

The following result states that our approximation scheme is valid: the proof is in the Appendix.
\begin{proposition} \label{itd_conv}
Let $u_0\in L^1(\R^d;(1+|x|^2)dx)\cap L^\infty(\R^d)$, and let $\rho_h$ be defined by \eqref{def_rho_h}.Then  there exists a function $\rho\in L^\infty([0,\infty);L^1(\R^d))$ such that  
\begin{equation*}
\sup_{0\leq t\leq T}\|\rho(\cdot,t) - \rho_h(\cdot,t)\|_{L^1(\R^d)} \to 0
\end{equation*}
for any $T>0$. Moreover, $\rho$ coincides with the unique weak solution for \eqref{pmedrift}.
\end{proposition}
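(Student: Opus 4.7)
The plan is to adapt the Crandall--Liggett/Chernoff framework used by V\'azquez for the pure porous medium equation \eqref{pme} to accommodate the lower-order drift $\nabla\cdot(U\nabla\Phi_i)$. Writing $A_i(U):=-\Delta U^m-\nabla\cdot(U\nabla\Phi_i)$, the implicit step \eqref{itd} becomes the elliptic problem $(I+hA_i)U_i=U_{i-1}$, which must be solved iteratively in $L^1(\R^d)\cap L^\infty(\R^d)$. The three tasks are: (i) solvability of each discrete step with uniform bounds, (ii) a quasi-contraction estimate that yields a Cauchy property for $\{\rho_h\}$, and (iii) identification of the limit with the unique weak solution.

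For (i), I would regularize the degeneracy by replacing $U^m$ with $(U+\varepsilon)^m-\varepsilon^m$ and invoke standard monotone-operator or Schauder theory on a bounded domain, then let the domain grow and $\varepsilon\to 0$, obtaining a nonnegative $U_i\in L^1\cap L^\infty$. Mass conservation $\int U_i\,dx=\int U_0\,dx$ follows by integrating \eqref{itd} (the $C^1_x$-regularity of $\Phi_i$ ensures that the flux terms vanish at infinity together with $U_i$). An $L^\infty$ bound of the form $\|U_i\|_\infty\le \|U_0\|_\infty e^{Kih}$, with $K=\sup_i\|\Delta\Phi_i\|_\infty$, is obtained by testing against $(U_i-M_i)_+$ with $M_i:=\|U_0\|_\infty e^{Kih}$, since $A_i$ restricted to the level set $\{U=M_i\}$ only sees the divergence of the drift.

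For (ii), the key estimate is a quasi-contraction in $L^1$: if $u,v\in L^1\cap L^\infty$ solve $(I+hA_i)u=f$ and $(I+hA_i)v=g$, then subtracting the equations, testing against a smooth approximation $\mathrm{sgn}_\delta(u-v)$, and passing $\delta\to0$ via Kato's inequality (applied to both $\Delta(u^m-v^m)$, which contributes a nonnegative term, and to $\nabla\cdot((u-v)\nabla\Phi_i)$, which integrates to zero, and to the remainder $\nabla\cdot(v\nabla\Phi_i)-\nabla\cdot(v\nabla\Phi_i)$ after rearranging) gives
$$\|u-v\|_{L^1}\le\|f-g\|_{L^1}+h\|\Delta\Phi_i\|_\infty\|u-v\|_{L^1}.$$
Iterating yields $\|U_i^{(1)}-U_i^{(2)}\|_{L^1}\le e^{Kih}\|U_0^{(1)}-U_0^{(2)}\|_{L^1}$. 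Combined with a Benilan--Crandall type estimate $\|U_i-U_{i-1}\|_{L^1}\le Ch$ (obtained by translating in time and using the quasi-contraction), this gives equicontinuity in $t$ and boundedness in $L^1\cap L^\infty$, uniformly in $h$. Comparing two schemes with step sizes $h$ and $h'$ through the standard Crandall--Liggett argument, corrected by the Gronwall factor $e^{KT}$, shows that $\{\rho_h\}$ is Cauchy in $C([0,T];L^1(\R^d))$; denote its limit by $\rho$.

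For (iii), passage to the limit in the weak formulation: testing \eqref{itd} against a smooth compactly supported $\varphi(x,t)$, performing Abel summation in time, and using $L^1$-convergence for the linear terms together with the uniform $L^\infty$ bound and dominated convergence for $U_i^m$, one sees that $\rho$ is a weak solution of \eqref{pmedrift}. Uniqueness of bounded $L^1$ weak solutions of \eqref{pmedrift} with $\Phi\in C^1_x\cap C_{x,t}$ and $\Delta\Phi$ bounded is standard (doubling of variables \`a la Kru\v{z}kov, adapted to the nonlinear diffusion as in Chapter 9 of \cite{v}); this pins down the limit unambiguously and promotes subsequential convergence to full convergence. The main obstacle is the loss of $L^1$-contractivity introduced by $\nabla\Phi$: unlike the pure PME, the discrete semigroup is only quasi-contractive, so all constants grow like $e^{KT}$ and the Crandall--Liggett estimate must be used in its Gronwall-corrected, non-autonomous form.
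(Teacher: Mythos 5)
Your overall framework --- implicit Euler time discretization, resolvent estimates, and the Crandall--Liggett theorem, following V\'azquez --- is exactly the route the paper takes (Lemmas~\ref{existence}, \ref{contraction}, and the one-line proof of Proposition~\ref{itd_conv}). However, your derivation of the key resolvent estimate is internally inconsistent. You subtract the two one-step equations and test against $\mathrm{sgn}_\delta(u-v)$, and you claim that the term coming from the drift, $\nabla\cdot\bigl((u-v)\nabla\Phi_i\bigr)$, ``integrates to zero.'' If that is true (and it is, with $\Delta\Phi_i$ bounded: after integration by parts one is left with $\pm h\int p_\delta(u-v)\,\Delta\Phi_i\,dx$ for a primitive $p_\delta$ satisfying $|p_\delta(w)|\leq\min(\delta/2,|w|/2)$, which vanishes as $\delta\to 0$ by dominated convergence), then there is no leftover term and you obtain the \emph{exact} $L^1$-contraction $\|u-v\|_{L^1}\leq\|f-g\|_{L^1}$, which is the paper's Lemma~\ref{contraction}. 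Your stated ``remainder $\nabla\cdot(v\nabla\Phi_i)-\nabla\cdot(v\nabla\Phi_i)$'' is literally the zero function, so the Gronwall factor $h\|\Delta\Phi_i\|_\infty\|u-v\|_{L^1}$ that appears in your displayed inequality has no source; it is inconsistent with your own claim two lines earlier.

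This does not destroy the final conclusion --- the Crandall--Liggett machinery works for quasi-accretive operators as well, so a spurious $e^{KT}$ factor only pollutes the constants --- but the argument should be cleaned up. The cleaner (and correct) statement is that for each fixed $i$ the operator $\mathcal{A}_i u=-\Delta u^m-\nabla\cdot(u\nabla\Phi_i)$ is $m$-accretive in $L^1(\R^d)$, so each resolvent $(I+h\mathcal{A}_i)^{-1}$ is a genuine contraction; the only non-autonomy is in switching $\Phi_i$ from step to step, which does not affect contractivity of any individual step. With this correction your points (i) and (iii) go through as stated and agree with the paper's approach via Lemmas~\ref{existence} and~\ref{contraction}.
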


\subsection{Rearrangement comparison}

For a given function $u(x): \R^d\to \R$, let us define $u^*$ as given in \eqref{def:rearrangement}.

Consider the following equation, where $f(x,t)\in C([0,\infty); L^1(\mathbb{R}^d)$) is a given function:
\begin{equation}
\rho_t = \Delta \rho^m + \nabla \cdot (\rho\nabla (f*V)). \label{pmedrift2}
\end{equation}

\begin{theorem} \label{pmedrift_conti_comp}
Suppose $m>1$. Let $V$ be given by (A) or (B), and let $\rho$ be the weak solution to \eqref{pmedrift2} with initial data $\rho(x,0) = \rho_0(x)$. Let $\bar{\rho}$ be the weak solution to the symmetrized problem
\begin{equation}
\rho_t = \Delta \rho^m + \nabla \cdot (\rho\nabla (f^* *V)), \label{pmedrift_symm}
\end{equation}
with initial data $\bar{\rho}(x,0) = \rho_0^*(x)$. Then $\bar \rho$ is radially decreasing, and 
$$
\rho^*(\cdot,t) \prec \bar{\rho}(\cdot, t)\hbox{ for all } t>0.
$$
\end{theorem}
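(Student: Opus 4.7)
The plan is to follow the Crandall--Liggett semigroup strategy used for the pure PME in Chapter~10 of \cite{v}, adapted to the fixed-drift setting provided by Proposition~\ref{itd_conv}. The three main steps are (i) implicit-time discretize both \eqref{pmedrift2} and \eqref{pmedrift_symm}, (ii) prove the rearrangement comparison at each discrete elliptic step, and (iii) pass to the limit $h\to 0$.

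First I would fix $h>0$ and set $\Phi_i:=(f*V)(\cdot,ih)$, $\bar\Phi_i:=(f^**V)(\cdot,ih)$, then define inductively $U_0=\rho_0$, $\bar U_0=\rho_0^*$ and, for $i\geq 1$, let $U_i$ and $\bar U_i$ solve the elliptic problems
\begin{equation*}
U_i-h\Delta U_i^m-h\nabla\!\cdot(U_i\nabla\Phi_i)=U_{i-1},\qquad
\bar U_i-h\Delta \bar U_i^m-h\nabla\!\cdot(\bar U_i\nabla\bar\Phi_i)=\bar U_{i-1}.
\end{equation*}
These are well posed, and by Proposition~\ref{itd_conv} the time-interpolants $\rho_h$ and $\bar\rho_h$ converge in $C([0,T];L^1(\R^d))$ to $\rho$ and $\bar\rho$ respectively. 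Note that since $f^*$ is radially decreasing and $\Delta V\geq 0$ is radial-decreasing, $\bar\Phi_i$ is radial with $\partial_r\bar\Phi_i\geq 0$, so that the associated drift points inward.

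The crux is the inductive claim $U_i^*\prec\bar U_i$. Starting from $U_0^*=\bar U_0$, and assuming $U_{i-1}^*\prec\bar U_{i-1}$, I would apply the classical Talenti/P\'olya--Szeg\H{o} technique to the elliptic equation for $U_i$: integrate against $\chi_{\{U_i>t\}}$ and use the coarea formula together with the isoperimetric inequality to dominate the symmetrization of the diffusion contribution by the radial Laplacian of $(U_i^*)^m$. For the drift, one uses the Hardy--Littlewood rearrangement inequality to show that the signed flux $\int_{\{U_i=t\}}U_i\,\partial_\nu\Phi_i\,d\sigma$ is controlled by the corresponding flux with $U_i^*$ and $\bar\Phi_i$; the algebraic key is that since $\bar\Phi_i=f^**V$, its radial gradient is the symmetric majorant of the radial component of $\nabla\Phi_i$. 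Combining the two gives a differential inequality for the super-level mass function of $U_i^*$ dominated by that of $\bar U_i$, from which a time-discrete analog of Proposition~\ref{comp_concentration} yields $U_i^*\prec\bar U_i$. Passing $h\to 0$ via $L^1$ convergence (the relation $\prec$ is closed under $L^1$ limits of nonnegatives) gives $\rho^*(\cdot,t)\prec\bar\rho(\cdot,t)$. The radial monotonicity of $\bar\rho$ is a separate claim, but it follows by applying the proof of Theorem~\ref{decreasing} to the fixed-drift equation \eqref{pmedrift_symm}: that proof uses only $\Delta V\geq 0$ and $\Delta V$ radially decreasing, and both properties transfer to $\Delta\bar\Phi=f^**\Delta V$, so the double-variable maximum-principle argument goes through verbatim.

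I expect the main obstacle to be the elliptic rearrangement step in the presence of the non-radial drift $\Phi_i$. The classical Talenti machinery handles pure diffusion cleanly, but the drift adds a signed flux across each level set whose symmetrized bound requires exploiting the convolution structure $\Phi_i=f*V$ with $V$ radial attractive and $\Delta V$ radially decreasing; establishing that $\bar\Phi_i=f^**V$ is indeed the "maximal inward" drift compatible with this structure is the real content of the step. A secondary nuisance is the degeneracy of the diffusion at $\{U_i=0\}$, which I would handle by replacing $U^m$ with a smoothed version $(U+\varepsilon)^m-\varepsilon^m$ in the test-function computation and letting $\varepsilon\to 0$ before taking $h\to 0$.
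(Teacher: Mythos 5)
Your proposal mirrors the paper's argument: implicit time discretization via Proposition~\ref{itd_conv}, an inductive one-step elliptic rearrangement comparison (the paper's Proposition~\ref{rearrangement_comp_one_step}, proved by the level-set/Talenti technique for the diffusion term together with a Riesz-type rearrangement inequality for the convolution drift, followed by a discrete mass-comparison lemma), and passage to the limit $h\to 0$, with the radial monotonicity of $\bar\rho$ obtained by the maximum-principle argument of Theorem~\ref{decreasing}. The only cosmetic difference is that you name Hardy--Littlewood where the paper invokes Riesz's rearrangement inequality after integrating the drift by parts to surface $\Delta V$, but the substance and structure of the proof are the same.
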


 Due to Proposition~\ref{itd_conv}, to prove Theorem~\ref{pmedrift_conti_comp} it suffices to show the following Proposition; see the Appendix for the proof. 

\begin{proposition}\label{rearrangement_comp_one_step}Suppose $V$ is given by (B) and $m>1$.
Let $u\in D$ (the domain D is defined in \eqref{d_def}) be the weak solution of
\begin{equation}\label{one-step}
-h\Delta u^m -h\nabla\cdot(u \nabla(f*V))+u = g,
\end{equation}
where $f, g\in L^1(\mathbb{R}^d)$ are nonnegative. Also, let $\bar{u}\in D$ be the solution to the symmetrized problem, i.e. $\bar{u}$ solves \eqref{one-step} with $f,g$ replaced by $\bar{f}$ and $\bar g$ respectively, where $\bar f$ and $\bar g$ are radially decreasing, have the same mass as $f$ and $g$ respectively, and satisfy $f^* \prec \bar f$ and  $g^* \prec \bar g$.  Then  $u^*\prec \bar{u}$.
\end{proposition}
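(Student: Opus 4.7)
The proof extends V\'azquez's Talenti-type symmetrization argument for the porous medium equation (Chapter 10 of \cite{v}) to accommodate the nonlocal drift. Since the symmetrized data are radial and radially decreasing, uniqueness for \eqref{one-step} together with a symmetrization argument shows that $\bar u$ is itself radially decreasing. The conclusion $u^*\prec \bar u$ is therefore equivalent to the pointwise comparison of mass concentrations $V(r):=\int_{B_r} u^*\, dx \le \bar V(r):=\int_{B_r} \bar u\, dx$ for all $r\ge 0$.

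The main step is to integrate \eqref{one-step} over the super-level set $\Omega_c:=\{u>c\}$. The divergence theorem, together with $u\equiv c$ on $\partial\Omega_c$ and $\Delta(f*V)=f*\Delta V$, gives
\begin{equation*}
\int_{\Omega_c} u\,dx - \int_{\Omega_c} g\,dx = -h\int_{\partial\Omega_c}|\nabla u^m|\,dS + hc\int_{\Omega_c} f*\Delta V\,dx.
\end{equation*}
Setting $\mu(c):=|\Omega_c|$ and combining the coarea formula, Cauchy--Schwarz, and the isoperimetric inequality controls the diffusion boundary term from below by a multiple of $c^{m-1}\mu(c)^{2-2/d}/(-\mu'(c))$. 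The right-hand side is then dominated by symmetrized analogues: Hardy--Littlewood together with $g^*\prec \bar g$ yields $\int_{\Omega_c} g\,dx \le \int_{B_{r_c}}\bar g\,dx$, where $|B_{r_c}|=\mu(c)$; and Riesz's rearrangement inequality applied to $(f,\chi_{\Omega_c},\Delta V)$, using that $\Delta V$ is radially decreasing, yields $\int_{\Omega_c}f*\Delta V\,dx \le \int f^*(y)\,(\chi_{B_{r_c}}*\Delta V)(y)\,dy$. The factor $\chi_{B_{r_c}}*\Delta V$ is itself radial and radially nonincreasing, so a layer-cake decomposition combined with $f^*\prec \bar f$ bounds this in turn by $\int_{B_{r_c}} \bar f*\Delta V\,dx$. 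Finally $\int_{\Omega_c} u\,dx = V(r_c)$ by the definition of $u^*$.

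Running the identical computation for the symmetrized problem produces equalities in place of all the above inequalities, since $\{\bar u>c\}$ is itself a ball of radius $\bar r_c$ and the rearrangement bounds are sharp on radially decreasing data. Writing the resulting identity for $\bar\mu(c):=|\{\bar u>c\}|$ and subtracting the inequality for $\mu$, then using the matching total masses $V(\infty)=\bar V(\infty)=\int g\,dx$ (obtained by integrating \eqref{one-step} over $\R^d$ and invoking $\int g=\int \bar g$) and the trivial boundary values $V(0)=\bar V(0)=0$, a standard comparison argument at a hypothetical first crossing of $V$ and $\bar V$ yields $V\le \bar V$ throughout. This is precisely $u^*\prec \bar u$.

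The principal obstacle is controlling the nonlocal drift term, which depends crucially on the hypothesis that $\Delta V$ is nonnegative and radially decreasing---the same structural assumption invoked in Lemma \ref{integral_lemma} and Proposition \ref{comp_concentration}. Secondary technical points are passing to regular level values of $c$ via Sard's theorem so that $\{u=c\}$ is smooth, and handling the degeneracy of $u^{m-1}$ near $\{u=0\}$ by the usual approximation $(u+\varepsilon)^{m-1}$ followed by $\varepsilon\downarrow 0$.
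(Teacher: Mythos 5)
Your proposal is correct and is in essence the same Talenti-type symmetrization argument that the paper follows (Chapter 10 and Theorems 17.5, 17.7 of V\'azquez). The computation you carry out---integrate \eqref{one-step} over $\{u>c\}$, handle the diffusion term via coarea, Cauchy--Schwarz and isoperimetry, and handle the drift term via Riesz rearrangement together with the radial monotonicity of $\Delta V$---matches the paper's derivation, which is phrased instead through the weak formulation with test function $\phi=(u^m-t)_+$ and differentiation in $t$; these are two faces of the same level-set calculation.

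Where you depart from the paper is in the organization of the endgame. The paper cleanly splits the argument into two separate lemmas: first, Lemma~\ref{rearrangement_comp} isolates the Riesz inequality for the drift and is used to show that $u^*$ is a \emph{mass-subsolution} of the symmetrized problem with data $f^*,g^*$; second, Lemma~\ref{rearrangement_comparison} is a self-contained radial ODE comparison (a maximum-principle argument on $A(r)=\int_{B_r}(v_1-v_2)$) that upgrades this subsolution to the desired ordering $u^*\prec\bar u$, absorbing the data replacement $f^*\mapsto\bar f$, $g^*\mapsto\bar g$ in one stroke. You instead fold the data comparison $f^*\prec\bar f$, $g^*\prec\bar g$ directly into the level-set inequality and then invoke a ``first crossing'' argument between $V(r)=\int_{B_r}u^*$ and $\bar V(r)=\int_{B_r}\bar u$. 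This works, but as written it is imprecise in two respects: (i) your differential inequality is parametrized by the level value $c$ while the quantity you want to compare is parametrized by the radius $r$, so a change of variables (or the paper's switch to the mass function $A(r)$) must be made explicit; (ii) the clean contradiction comes from a point where $A=V-\bar V$ attains a positive \emph{maximum} (giving $A'(r_0)=0$, hence $u^*(r_0)=\bar u(r_0)$, which is what lets you use the equation), not from a generic sign crossing where $A'(r_0)$ may be positive. The paper's two-lemma decomposition has the advantage that the ODE comparison (Lemma~\ref{rearrangement_comparison}) is stated once, at the level of mass functions in the $r$-variable, and avoids this bookkeeping; it also gets reused implicitly elsewhere in Section 5. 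Your single-pass version is conceptually more direct but would need these two points tightened to be a complete proof.
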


\textbf{Proof of Theorem \ref{pmedrift_conti_comp}:}

The radial monotonicity of $\bar \rho$ can be shown via a similar argument as in Theorem \ref{decreasing}: in fact the argument is easier here since $f^* * \Delta V$ is a radially decreasing function.

Next we prove $\rho^* \prec \bar \rho$ for all $t\geq 0$. Let $U_i$ be the discrete solution for the original problem, and let $V_i$ be the discrete solution for the symmetrized problem. Due to Proposition~\ref{itd_conv} it suffices to prove that $U_i^* \prec V_i$ for all $i\in \mathbb{N}$. Here $U_i$ solves
\begin{equation}
\frac{U_i - U_{i-1}}{h} = \Delta U_i^m + \nabla \cdot (U_i \nabla (f_i*V)),
\end{equation}
where $U_0 = u(\cdot,0), f_i = f(\cdot, ih)$, and $V_i$ solves
\begin{equation}
\frac{V_i - V_{i-1}}{h} = \Delta V_i^m + \nabla \cdot (V_i \nabla (f_i^* *V)),
\end{equation}
where $V_0 = u^*(\cdot,0)$. Since $U_0^* \prec V_0$, by applying Proposition~\ref{rearrangement_comp_one_step} inductively we can conclude. Lastly when $V=\mathcal{N}$, we can use a mollified Newtonian kernel to approximate $\mathcal{N}$, and the result follows via Proposition \ref{approximation}. \hfill$\Box$

Now we are ready to prove our main result:

\textbf{ Proof for Theorem~\ref{rearrangement}:}
Let us first prove the theorem when $V$ is given by (B), where we have global existence of solutions. Let $\rho_1(\cdot, t) := \rho^*(\cdot, t)$ for all $t\geq 0$, where $\rho(x,t)$ is the weak solution of \eqref{pde} with initial data $\rho(x,0) = \rho_0(x)$.  For $i>1$, we let $\rho_i$ be the weak solution to the following equation:
\begin{equation}
(\rho_i)_t = \Delta (\rho_i)^m + \nabla \cdot (\rho_i \cdot \nabla (\rho_{i-1} * V)),
\end{equation}
with initial data $\rho_i(x,0) = \rho^*(x,0)$. Observe that $\rho_i(\cdot, t)$ is radially decreasing for all $i\in \mathbb{N}^+, t\geq0$.

By Theorem \ref{pmedrift_conti_comp}, we have $\rho_i \prec \rho_{i+1}$ for all $i\in \mathbb{N}$.  Hence we have
\begin{equation}\label{sequence_comp}
\rho^*(\cdot, t) = \rho_1(\cdot, t) \prec \rho_2(\cdot, t) \prec \rho_3(\cdot, t) \prec \ldots, ~\hbox{ for all } t.
\end{equation}
Due to Theorem~\ref{unif_cont}, $\{\rho_i\}$ is locally uniformly continuous in space and time.
Hence by the Arzela-Ascoli Theorem any subsequence of $\{\rho_i\}$ locally uniformly converges to a function $\bar{\rho}$ along a subsequence. On the other hand $\bar\rho$ is the unique weak solution for \eqref{pde} with initial data $\bar\rho(x,0) = \rho_0^*(x)$. This means that the whole sequence $\{\rho_i\}$ locally uniformly converges to $\bar{\rho}$. Now we can conclude due to \eqref{sequence_comp}.

When $V=\mathcal{N}$, we can use a mollified Newtonian kernel to approximate $\mathcal{N}$, and the result follows via Proposition \ref{approximation}.  \hfill$\Box$
 \begin{corollary}\label{lp_compare}
Suppose $m>1$. Let $V$ be given by (A) or (B), and let $\rho$ be the weak solution of \eqref{pde} with initial data $\rho_0(x)$.  Let $\bar \rho$ be the solution to the symmetrized problem, i.e. $\bar \rho$ is the weak solution to \eqref{pde} with initial data $\rho_0^*(x)$.  Assume $\bar \rho$ exists for $t\in[0,T)$, where $T$ may be infinite. Then for any $p\in (1,\infty]$ we have
$$
\|\rho(\cdot, t)\|_{L^p(\mathbb{R}^d)} \leq \|\bar\rho(\cdot, t)\|_{L^p(\mathbb{R}^d)},~ \hbox{ for all } t\in [0,T).
$$
\end{corollary}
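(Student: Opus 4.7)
\emph{Proof plan.} The plan is to combine Theorem~\ref{rearrangement} with two classical facts: $L^p$ norms are invariant under symmetric decreasing rearrangement, and the Hardy--Littlewood--P\'olya majorization principle for nonincreasing functions on the half-line. First I would note that $\|\rho(\cdot,t)\|_{L^p(\R^d)}=\|\rho^*(\cdot,t)\|_{L^p(\R^d)}$ for every $p\in[1,\infty]$ by equimeasurability. Next, since $\rho_0^*$ is radially decreasing, Theorem~\ref{decreasing} yields that $\bar\rho(\cdot,t)$ is itself radially decreasing, so $\bar\rho(\cdot,t)=\bar\rho(\cdot,t)^*$; and as \eqref{pde} is in divergence form the mass is conserved, giving $\int\rho^*(\cdot,t)\,dx=\int\bar\rho(\cdot,t)\,dx=\int\rho_0\,dx$. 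The problem therefore reduces to the claim that whenever $u_1,u_2$ are nonnegative radially decreasing functions on $\R^d$ with the same total mass and $u_1\prec u_2$, one has $\|u_1\|_{L^p}\leq\|u_2\|_{L^p}$ for every $p\in(1,\infty]$. Taking $u_1:=\rho^*(\cdot,t)$ and $u_2:=\bar\rho(\cdot,t)$, the hypothesis $u_1\prec u_2$ is exactly the conclusion of Theorem~\ref{rearrangement}.

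For $1<p<\infty$ I would parameterize each radially decreasing $u_i$ by the ball volume: with $V=|B(0,r)|$ set $v_i(V):=u_i(r)$, so that $v_1,v_2$ are nonincreasing on $[0,\infty)$ with $\int_0^\infty v_i(V)\,dV=\int_{\R^d}u_i\,dx$ and $\int_0^V v_i(W)\,dW=\int_{B(0,r)}u_i\,dx$. Under this change of variable $u_1\prec u_2$ together with equality of total mass becomes precisely the one-dimensional majorization hypothesis
$$
\int_0^V v_1(W)\,dW\leq\int_0^V v_2(W)\,dW \ \ \text{for every } V>0,\qquad \int_0^\infty v_1 = \int_0^\infty v_2.
$$
The Hardy--Littlewood--P\'olya theorem then gives $\int_0^\infty\Phi(v_1)\,dV\leq\int_0^\infty\Phi(v_2)\,dV$ for every convex $\Phi:[0,\infty)\to[0,\infty)$ with $\Phi(0)=0$, and specializing $\Phi(s)=s^p$ and reversing the change of variable produces $\|u_1\|_p\leq\|u_2\|_p$.

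For $p=\infty$ I would argue directly. Given $\varepsilon>0$, the essential supremum definition ensures that the superlevel set $\{\rho^*(\cdot,t)>\|\rho(\cdot,t)\|_\infty-\varepsilon\}$ is a ball $B(0,r_\varepsilon)$ of positive volume, so
$$
\int_{B(0,r_\varepsilon)}\rho^*(\cdot,t)\,dx\geq (\|\rho(\cdot,t)\|_\infty-\varepsilon)|B(0,r_\varepsilon)|.
$$
By Theorem~\ref{rearrangement} the same lower bound is inherited by $\int_{B(0,r_\varepsilon)}\bar\rho(\cdot,t)\,dx$; on the other hand, since $\bar\rho(\cdot,t)$ is radially decreasing this integral is at most $\|\bar\rho(\cdot,t)\|_\infty|B(0,r_\varepsilon)|$. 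Cancelling $|B(0,r_\varepsilon)|$ and letting $\varepsilon\to 0$ yields $\|\rho(\cdot,t)\|_\infty\leq\|\bar\rho(\cdot,t)\|_\infty$.

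No substantial obstacle is expected: the Hardy--Littlewood--P\'olya majorization is classical, and the real content is simply the volume change of variable that turns the $d$-dimensional mass-concentration order $\prec$ into the standard one-dimensional majorization hypothesis. The slight asymmetry with the $p=\infty$ case is caused by $\Phi(s)=s$ failing to be strictly convex, which is why that endpoint is treated separately via the short elementary ball argument above.
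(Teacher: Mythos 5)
Your proof is correct, and since the paper states Corollary~\ref{lp_compare} without giving a proof, yours supplies exactly the standard argument the authors intended the reader to supply. The reduction chain $\|\rho(\cdot,t)\|_p=\|\rho^*(\cdot,t)\|_p$ (equimeasurability), radial monotonicity of $\bar\rho$ via Theorem~\ref{decreasing}, and equality of total masses by mass conservation correctly reduces the claim to: two radially decreasing profiles with $u_1\prec u_2$ and equal mass satisfy $\|u_1\|_p\le\|u_2\|_p$. The change of variables $V=|B(0,r)|$ correctly converts $\prec$ into the one-dimensional Hardy--Littlewood--P\'olya majorization hypothesis for the nonincreasing profiles $v_i$, and $\Phi(s)=s^p$ is convex with $\Phi(0)=0$, so the inequality follows; the separate ball-averaging argument for $p=\infty$ is also clean and correct. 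One small simplification you could make: since $\Phi(s)=s^p$ is convex \emph{and nondecreasing}, equality of total masses is not actually needed for the HLP step (it would be needed only for non-monotone convex $\Phi$), so you invoke slightly more structure than strictly required. Also, your closing remark that the $p=\infty$ asymmetry comes from $\Phi(s)=s$ failing to be strictly convex is a bit off the mark --- the real reason is that $\|\cdot\|_\infty$ is not of the form $\int\Phi(u)$ for any fixed $\Phi$, so the endpoint must be handled by a limiting or direct argument, which is what you do.
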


We are now ready to generalize Theorem \ref{exp_conv_m<2} to non-radial solutions.

\textbf{Proof of Corollary \ref{exp_conv_m<2_corollary}: }
Let $\bar \mu(\lambda, \tau)$ be the weak solution to \eqref{after_scaling} with initial data $\mu_0^*(\lambda)$.  Then $\bar \mu(\cdot, 0)$ meets the assumptions for Theorem \ref{exp_conv_m<2}, which implies the global existence of $\bar \mu$. Due to Corollary \ref{lp_compare}, $\|\mu(\cdot, \tau)\|_\infty \leq \|\bar \mu(\cdot, \tau)\|_\infty$ for all $\tau$ during the existence of $\mu$; hence the uniform boundedness of $\bar \mu$ yields that $\mu$ cannot blow up and thus must exist globally in time. This proves (a).

Now suppose $\mu_0$ is radially symmetric and compactly supported, and $\mu_0$ satisfies the assumption in (a) such that the corresponding solution $\mu$ exists globally in time. In this case we can construct subsolution and supersolution as in the proof for Theorem \ref{exp_conv_m<2} to prove (b).
\hfill$\Box$

\subsection{Instant regularization in $L^\infty$}

We finish this section by presenting the following regularization result as a corollary of Theorem~\ref{rearrangement}.  It says that for initial data $\rho_0 \in L^1(\mathbb{R}^d;(1+|x|^2)dx)\cap L^\infty(\mathbb{R}^d)$, no matter how large the $L^\infty$ norm of $\rho_0$ is, $\|\rho(\cdot, t)\|_\infty$ will always be bounded by $t^{-\alpha}$ for some short time. 
\begin{proposition}\label{instant}
Let $V$ be given by (A) or (B), and let $m>2-2/d$.  Let $\rho(x,t)$ be the weak solution for \eqref{pde}, with initial data $\rho_0 \in L^1(\mathbb{R}^d;(1+|x|^2)dx)\cap L^\infty(\mathbb{R}^d)$. Let us denote $A=\| \rho_0 \|_1$ and $\alpha := \frac{d}{d(m-1)+2}$. Then there exists $c=c(m,d,A,V)$ and $t_0=(2c)^{1/\alpha} >0$ such that we have $\rho(\cdot,t)\in L^\infty(\mathbb{R}^d)$ with
$$
\|\rho(\cdot,t)\|_{L^\infty(\mathbb{R}^d)} \leq c(m,d, A, V) t^{-\alpha}\hbox{ for all }0< t<t_0.
$$

\end{proposition}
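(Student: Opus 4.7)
The plan is to combine the rearrangement comparison of Theorem~\ref{rearrangement} with a new self-similar supersolution of the mass equation \eqref{pde_for_m}, obtained by dilating the stationary profile $\rho_A$. First, by Corollary~\ref{lp_compare} we have $\|\rho(\cdot,t)\|_\infty\leq\|\bar\rho(\cdot,t)\|_\infty$ where $\bar\rho$ solves \eqref{pde} with rearranged initial data $\rho_0^*$, and Theorem~\ref{decreasing} guarantees that $\bar\rho(\cdot,t)$ stays radially decreasing, so $\|\bar\rho(\cdot,t)\|_\infty=\bar\rho(0,t)$. It is thus enough to bound this pointwise value.

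Let $\rho_A$ be the unique radial stationary solution with mass $A$ (Theorems~\ref{theorem1} and \ref{uniqueness2}), supported in $B(0,R_A)$. Inspired by the construction in the proof of Theorem~\ref{exp_conv}, I would set
\[
\eta(x,t):=k(t)^d\,\rho_A\bigl(k(t)\,x\bigr),
\]
with $k(t)>1$ decreasing in time. Repeating the computation that led to \eqref{velocity_field_rescaled}, but now in the regime $k>1$ where the radial velocity of $\eta$ is outward, one checks that $\eta$ satisfies the supersolution inequality \eqref{ineq_m1} for $V=\mathcal{N}$ provided
\[
k'(t)\;\geq\;-C_0\,k^{d+1}\bigl(k^{d(m-2)+2}-1\bigr),\qquad C_0=A/(\sigma_d R_A^d),
\]
where the sharpest constraint occurs at $r\to R_A/k(t)$ because $s\mapsto M(s;\rho_A)/s^d$ is decreasing by radial monotonicity of $\rho_A$. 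For $V$ of type~(B) the same inequality holds with an additional $\|\Delta V\|_1$ factor absorbed into $C_0$, obtained by bounding the nonlocal quantity $\tilde M(r,t;\eta)$ via Young's convolution inequality and a bump-function decomposition, in the spirit of Lemma~\ref{integral_lemma} and \eqref{estimate1001}. I would take equality in the ODE and choose $k(0)$ large enough (depending on $\|\rho_0^*\|_\infty$) that $\bar\rho(\cdot,0)\prec\eta(\cdot,0)$; since $M(r,0;\eta)=M(k(0)r;\rho_A)$ already saturates to $A$ for $r\geq R_A/k(0)$, this holds once $k(0)\gtrsim\|\rho_0^*\|_\infty^{1/d}$.

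Proposition~\ref{comp_concentration} then yields $\bar\rho(\cdot,t)\prec\eta(\cdot,t)$ for all $t$, and since both densities are radially decreasing and continuous at the origin, $\bar\rho(0,t)\leq\eta(0,t)=k(t)^d\rho_A(0)$. For $k\gg 1$ the ODE is approximately $k'\sim -C_0\,k^{d(m-1)+3}$, which separates to
\[
k(t)^{-(d(m-1)+2)}\;=\;k(0)^{-(d(m-1)+2)}+C_0'\,t\;\geq\;C_0'\,t,
\]
so $k(t)^d\leq (C_0'\,t)^{-\alpha}$ with the constant $C_0'$ \emph{independent of $k(0)$}. The desired bound $\|\rho(\cdot,t)\|_\infty\leq c(m,d,A,V)\,t^{-\alpha}$ then follows on the range $(0,t_0)$ on which the leading-order ODE asymptotics remain sharp; the specific choice $t_0=(2c)^{1/\alpha}$ just fixes the threshold at which $c\,t^{-\alpha}$ hits the fixed value $1/2$.

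The hard part will be verifying the supersolution inequality \eqref{ineq_m1} for $\eta$ in the type~(B) case, where $\tilde M(r,t;\eta)\neq M(r,t;\eta)$: controlling $\tilde M(r,t;\eta)$ from above requires exploiting the radial decrease of $\Delta V$ combined with $L^1$ convolution bounds, precisely along the lines of the argument that produced \eqref{estimate1001} inside the proof of Proposition~\ref{comp_concentration}. Once this estimate is in hand, the remaining ODE analysis is standard separation of variables.
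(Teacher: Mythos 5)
Your proposal takes essentially the same route as the paper's proof: reduce to radially decreasing data via Corollary~\ref{lp_compare}, compare against the dilated stationary supersolution $\eta(x,t)=k(t)^d\rho_A(k(t)x)$ with $k(0)$ large --- the same supersolution already built in step 4 of the proof of Theorem~\ref{exp_conv}, which the paper simply reuses --- and extract the $t^{-\alpha}$ rate from the ODE for $k$. The only cosmetic difference is that you separate the ODE for $k$ directly in the regime $k\gg1$, whereas the paper passes to $h(t)=\|\eta(\cdot,t)\|_\infty\sim k(t)^d$ and compares with the solution of $f'=-cf^{m+2/d}$ on $\{h\geq 2\}$; both yield the same exponent $\alpha=d/(d(m-1)+2)$ with a constant independent of $\|\rho_0\|_\infty$.
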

\begin{proof} By Corollary \ref{lp_compare}, it suffices to prove the inequality when $\rho_0$ is radially symmetric.
Also, in this proof we denote $c(m,d, A, V)$ to be all constants which only depend on $m, d, A, V$.

 Let $\rho_A$ be the radial stationary solution of \eqref{pde} with mass $A$. Note that $\rho_A$ is radially decreasing, and thus $\rho_A(0)>0$.  Since $u_0$ is a radial function in $L^\infty$, we can scale $\rho_A$ to make it more concentrated than $u_0$, i.e. we choose $0<a<1$ to be sufficiently small such that
$$
u_0 \prec a^{-d}{\rho_A}(a^{-1} x).
$$
As in the proof of Theorem~\ref{exp_conv}, let us define
\begin{equation*}
\eta(r,t) := k^d(t)~{\rho_A}\big(k(t)r\big), \label{def_eta_2}
\end{equation*} where $k(t)$ solves the following ODE
with initial data $k(0)=a^{-1}$:
$$
k'(t) =  c(m,d, A, V) k^{d+1}(1-k^{d(m-2+2/d)}).
$$
Here $c(m,d, A, V)$ corresponds to $C_2$ in the proof for Theorem~\ref{exp_conv}.  It was shown in the proof that
 $$
 \rho(\cdot,t)\prec\eta(\cdot, t) \quad\hbox{ for all } t\geq 0,
 $$
which in particular yields that
 $$
 \|\eta(\cdot, t)\|_{L^\infty(\mathbb{R}^d)} \geq \|\rho(\cdot, t)\|_{L^\infty(\mathbb{R}^d)}\quad \hbox{ for all } t\geq 0.
 $$
Observe that, by definition,
$$
h(t) := \|\eta(\cdot, t)\|_{L^\infty(\mathbb{R}^d)} = k^d(t) \rho_A(0) = c(m,d, A, V) k^d(t).
$$

Therefore to prove our proposition it is enough to show
$$
h(t) \leq f(t):=c(m,d, A,V) t^{-\alpha} \quad\text{for all } h(0)>0 \text{ and } t\in[0,t_0],
$$
where $t_0$ is chosen such that $f(t) \geq 2$.
Note that  $h(t)$ solves
\begin{eqnarray*}
h'(t) &=& c(m,d, A,V) k^{d-1}k' \\
&=& c(m,d, A,V) h^2 \big(1- h^{m-2+2/d}\big).
\end{eqnarray*}
In particular when $h(t) \geq 2$, $h$ satisfies the following inequality
$$
h'(t) \leq -c(m,d, A, V) h^{m+2/d}.
$$
Since $f(t)$ solves the above ODE with equality, we obtain $h(t) \leq f(t)$ for $0\leq t\leq t_0$. Now we are done.
\end{proof}

\appendix
\section{Appendix}

\subsection{Proof of existence for $\rho$ as given in Proposition 2.1}

Here we will show the existence of a global minimizer for the free energy functional given in \eqref{energy}.  First note that the kernels $V$ given in (A) and (B) belong to $M^{\frac{d}{d-2}}$, where $M^p$ denotes the weak $L^p$ space.

Our proof is based on a theorem of Lions in \cite{l}:
\begin{theorem}[\cite{l}] \label{lions_thm}
Suppose $f\in M^p(\mathbb{R}^d)$, $f\geq 0$ and consider the problem 
\begin{equation*}
I_\lambda = \inf_{u\in K_\lambda} \Big\{ \int_{\mathbb{R}^d} \frac{1}{m-1}u^m dx - \frac{1}{2} \int_{\mathbb{R}^d} u(u*f) dx \Big\},  \label{I_lambda}
\end{equation*}
where
\begin{equation*}
K_\lambda = \Big\{ u\in L^q(\mathbb{R}^d) \cap L^1(\mathbb{R}^d), ~u\geq 0 \emph{ a.e., } \int_{\mathbb{R}^d} u dx = \lambda\Big\} \emph{ with } q=\frac{p+1}{p}.
\end{equation*}
Then there exists a minimizer of problem $(I_\lambda)$ if and only if the following  holds:
\begin{equation}
I_\lambda < I_\alpha + I_{\lambda-\alpha},~ \forall \alpha \in (0,\lambda). \label{condition_lions}
\end{equation}
\end{theorem}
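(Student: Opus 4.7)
The plan is to invoke Lions' concentration-compactness principle, obtaining sufficiency of \eqref{condition_lions} via the standard trichotomy argument and necessity via a subadditivity-reversal argument. Set $J(u) := \int \frac{1}{m-1} u^m \, dx - \frac{1}{2}\int u (u*f) \, dx$; the bulk of the work is in the sufficiency direction.

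First I would establish coercivity. Using $f \in M^p$ with $p = d/(d-2)$ and the weak Young (or Hardy--Littlewood--Sobolev) inequality,
$$\left| \int u (u*f) \, dx \right| \leq C \|f\|_{M^p} \|u\|_q^2,$$
and then interpolating $\|u\|_q$ between $\|u\|_1 = \lambda$ and $\|u\|_m$ shows that the $L^m$-term dominates the interaction term, yielding $I_\lambda > -\infty$ together with an $L^1 \cap L^q$ bound on any minimizing sequence $\{u_n\} \subset K_\lambda$. Applying the concentration-compactness lemma to the probability measures $\lambda^{-1} u_n\, dx$, along a subsequence exactly one of three alternatives holds: (i) tightness up to translations $\{y_n\}$; (ii) vanishing; or (iii) dichotomy at some mass level $\alpha \in (0,\lambda)$.

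Ruling out dichotomy is a direct consequence of the hypothesis: one splits $u_n = u_n^{(1)} + u_n^{(2)}$ with approximate masses $\alpha$ and $\lambda - \alpha$ whose supports recede infinitely apart, so that the cross-interaction $\int u_n^{(1)} (u_n^{(2)} * f) \, dx \to 0$ (since $f$ decays as an element of $M^p$), giving in the limit $I_\lambda \geq I_\alpha + I_{\lambda-\alpha}$, which contradicts \eqref{condition_lions}. Ruling out vanishing is the main technical step. Under vanishing, Lions' integral lemma forces $u_n \to 0$ strongly in every $L^s$ with $1 < s < \infty$, so the interaction energy vanishes and $I_\lambda \geq 0$; one must then exhibit an explicit trial function of mass $\lambda$ with $J < 0$ to reach a contradiction, via a scaling/concentration argument where the sign-definite interaction term beats the $L^m$ term at a suitable length scale. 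This is the delicate point and requires care with the exponents $m$, $q$ and $p$.

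Once (ii) and (iii) are excluded, tightness of $\tilde u_n := u_n(\cdot + y_n)$ combined with weak $L^q$ compactness produces a weak limit $u_\infty$ with $\int u_\infty\, dx = \lambda$. Convexity of $u \mapsto u^m$ gives weak lower semicontinuity of the first term, while tightness together with the HLS bound yields continuity of the quadratic interaction term along our sequence; hence $J(u_\infty) \leq \liminf J(\tilde u_n) = I_\lambda$ and $u_\infty$ is a minimizer. For the converse, one first notes that $I_\lambda$ is always (non-strictly) subadditive, since placing approximate minimizers for $I_\alpha$ and $I_{\lambda-\alpha}$ far apart gives a trial function for $I_\lambda$ with cross-interaction tending to zero by the decay of $f$. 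If strict subadditivity failed at some $\alpha$, the equality $I_\lambda = I_\alpha + I_{\lambda-\alpha}$ would allow one to construct a minimizing sequence exhibiting dichotomy, whose mass splits and therefore cannot have a weak limit of mass $\lambda$ realizing $I_\lambda$, precluding the existence of a minimizer and completing the \emph{if and only if}.
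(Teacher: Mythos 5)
You should first note what you are actually comparing against: the paper does not prove this statement. It is quoted from Lions \cite{l} and used as a black box; the paper's own contribution in the appendix is only the verification (Propositions A.3 and A.4) that the hypotheses of Proposition~\ref{equiv_cond_lions} hold for the kernels (A) and (B). Your proposal is therefore a reconstruction of Lions' concentration-compactness proof, and its architecture (trichotomy, exclusion of dichotomy via strict subadditivity, exclusion of vanishing, passage to the limit along a tight translated subsequence) is indeed the right one.

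Within that architecture there are, however, genuine gaps. First, in the vanishing case you appeal to ``Lions' integral lemma'' to conclude $u_n\to 0$ strongly in $L^s$; that lemma requires a uniform $W^{1,p}$ bound, which you do not have --- minimizing sequences here are only bounded in $L^1\cap L^q$. In this locally compact setting one instead kills the interaction term directly, splitting $f=f\chi_{B_R}+f\chi_{B_R^c}$ and using the $M^{d/(d-2)}$ decay for the far part and $\sup_y\int_{B(y,R)}u_n\,dx\to 0$ for the near part. Moreover, the ``delicate point'' you flag --- exhibiting a trial function with $J<0$ --- is unnecessary: a spreading argument ($u_R=\mu R^{-d}\phi(\cdot/R)$, $R\to\infty$) shows $I_\mu\le 0$ for every $\mu$, so the hypothesis already forces $I_\lambda<I_\alpha+I_{\lambda-\alpha}\le 0$, which contradicts the conclusion $I_\lambda\ge 0$ obtained under vanishing. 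Second, the continuity of the quadratic interaction term along a merely weakly $L^q$-convergent tight sequence is asserted, not proved; weak convergence does not pass to the limit in a quadratic form, and this is exactly where the $M^p$ structure of $f$ must be used again. Third, and most seriously, the ``only if'' direction is not a proof as written: exhibiting one minimizing sequence that dichotomizes does not preclude the existence of a minimizer, since existence only requires that some minimizing sequence converge. The necessity direction needs a different argument --- for instance, that if minimizers of the subproblems exist then placing them at finite distance makes the cross-interaction $-\tfrac12\int u^{(1)}(u^{(2)}*f)\,dx$ strictly negative (as $f\ge 0$), or the scaling identity $I_{\theta\lambda}<\theta I_\lambda$ for $\theta>1$ that underlies Proposition~\ref{equiv_cond_lions}.
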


\begin{proposition}[\cite{l}] \label{equiv_cond_lions}
Suppose there exists some $\alpha\in (0,d)$ such that $f(tx)\geq t^{-\alpha}f(x)$ for all $t\geq 1$.
Then (\ref{condition_lions}) holds if and only if 
\begin{equation}\label{I_lambda<0}
I_\lambda < 0, \hbox{ for all } \lambda>0.
\end{equation}
\end{proposition}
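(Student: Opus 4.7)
My plan is to derive the equivalence from a dilation scaling argument that exploits the hypothesis $f(ty)\geq t^{-\alpha}f(y)$ for $t\geq 1$. The core observation is that $\phi(\lambda) := I_\lambda/\lambda$ is strictly decreasing on $(0,\infty)$ exactly when $I_\lambda<0$ throughout, and strict monotonicity of $\phi$ is equivalent to the strict subadditivity in \eqref{condition_lions}.

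For the main direction $(\Leftarrow)$: given $u\in K_\lambda$ and $t>1$, define $u_t(x):=u(x/t^{1/d})$, which lies in $K_{t\lambda}$. A change of variables gives $\int u_t^m\,dx = t\int u^m\,dx$ and
\begin{equation*}
\int u_t(u_t*f)\,dx \;=\; t^2\iint u(x)u(y)\,f(t^{1/d}(x-y))\,dx\,dy \;\geq\; t^{2-\alpha/d}\int u(u*f)\,dx,
\end{equation*}
where the hypothesis is applied with $s=t^{1/d}\geq 1$. Writing $E_1(u):=\tfrac{1}{m-1}\int u^m$ and $E_2(u):=\tfrac12\int u(u*f)$, this says $\mathcal{F}(u_t)\leq tE_1(u) - t^{2-\alpha/d}E_2(u)$, and since $\alpha<d$ we have $2-\alpha/d>1$. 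Take a minimizing sequence $u_n\in K_\lambda$ for $I_\lambda<0$; from $E_1,E_2\geq 0$ and $E_1(u_n)-E_2(u_n)\to I_\lambda$, one gets $\liminf_n E_2(u_n)\geq |I_\lambda|$, so
\begin{equation*}
I_{t\lambda} \;\leq\; tI_\lambda - \tfrac12(t^{2-\alpha/d}-t)|I_\lambda| \;<\; tI_\lambda \qquad\text{for every } t>1.
\end{equation*}
Hence $\phi$ is strictly decreasing, and for any $0<\beta<\lambda$ we have $\beta\phi(\lambda)<I_\beta$ and $(\lambda-\beta)\phi(\lambda)<I_{\lambda-\beta}$, which sum to \eqref{condition_lions}.

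For $(\Rightarrow)$: non-strict subadditivity $I_\lambda\leq I_\beta+I_{\lambda-\beta}$ is standard (take almost-minimizers on disjoint translates with separation sending the cross term $\int u(v*f)$ to zero by the decay of $f$ at infinity). Suppose by contradiction that $I_{\lambda_0}\geq 0$; iterating the assumed strict subadditivity yields $I_{\lambda_0/n}\geq I_{\lambda_0}/n\geq 0$. Testing $\mathcal{F}$ on $w_n(x):=(\lambda_0/n)\,u(x/n^{1/d})\in K_{\lambda_0}$ for a fixed $u$ of unit mass, the same dilation computation gives
\begin{equation*}
I_{\lambda_0} \;\leq\; \mathcal{F}(w_n) \;\leq\; \lambda_0^{m}\,n^{1-m}E_1(u) - \lambda_0^2\,n^{-\alpha/d}E_2(u),
\end{equation*}
which is strictly negative for $n$ large provided $d(m-1)>\alpha$ (exactly the regime relevant here, with $\alpha=d-2$ for the Newtonian-type kernels and $m>2-2/d$ as in Proposition \ref{stat_sol_regularity_decreasing}). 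This contradicts $I_{\lambda_0}\geq 0$.

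The main obstacle I anticipate is the $(\Rightarrow)$ direction outside the regime $d(m-1)>\alpha$: there the spreading competitor no longer automatically produces negative energy, and a more delicate sharp form of subadditivity would be required. Fortunately, this regime falls outside the parameter range used in the paper, so the simple spreading argument suffices for the existence application invoked through Theorem \ref{lions_thm}.
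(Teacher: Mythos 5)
Note first that the paper itself does not supply a proof of this statement; it is quoted verbatim from Lions \cite{l}, so there is no in-paper argument to match. Your reconstruction is therefore judged on its own terms.

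Your $(\Leftarrow)$ direction is correct and is the standard Lions dilation argument: $u_t(x) = u(x/t^{1/d})$ lies in $K_{t\lambda}$, the kinetic part scales linearly while the interaction part scales at least like $t^{2-\alpha/d}$ with $2-\alpha/d>1$, and negativity of $I_\lambda$ forces the minimizing sequence to carry a definite amount of interaction energy, giving $I_{t\lambda} < tI_\lambda$ for $t>1$. The conversion of strict monotonicity of $\lambda \mapsto I_\lambda/\lambda$ into strict subadditivity is also carried out correctly.

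The $(\Rightarrow)$ direction, however, is handled suboptimally, and the ``gap'' you flag in the regime $d(m-1)\leq\alpha$ is a self-inflicted one rather than a genuine obstruction in the proposition. Your argument tries to show $I_{\lambda_0}<0$ directly from the spreading competitor $w_n$ without ever invoking the hypothesis \eqref{condition_lions}, which is why it only succeeds when the diffusion term decays faster than the interaction term under dilation. The clean unconditional argument uses the hypothesis: the same spreading competitors show $E_1(w_n)\to 0$ (since $m>1$) and $E_2(w_n)\to 0$ (since $f$ vanishes at infinity, e.g. by dominated convergence, or by $f\in M^p$), hence $I_\lambda\leq 0$ for every $\lambda>0$ with no restriction on $m$ or $\alpha$. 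If now $I_{\lambda_0}\geq 0$ for some $\lambda_0$, then $I_{\lambda_0}=0$, and strict subadditivity gives $0=I_{\lambda_0}<2I_{\lambda_0/2}\leq 0$, a contradiction. Also, the intermediate iterated inequality $I_{\lambda_0/n}>I_{\lambda_0}/n$ that you derive is never actually used in your argument, which suggests a confusion about what needed proving; and the remark about non-strict subadditivity via far-apart translates, while correct, likewise plays no role in your proof as written.
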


 For the rest of this subsection, we will verify that Proposition~\ref{equiv_cond_lions} applies to our kernels.
\begin{proposition}\label{above11}
Let $f=V$ given by either \eqref{kernelA} or \eqref{kernelB}. Then $f(tx)\geq t^{-\alpha} f(x)$ with $t\geq 1$ and $\alpha=d-2$.
\end{proposition}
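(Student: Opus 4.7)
The plan is to reduce both cases to an explicit radial calculation and then prove the scaling via a monotonicity of the ``scaled magnitude'' of the potential. Case (A) is immediate: since $V(x) = -c_d|x|^{-(d-2)}$ is exactly $(d-2)$-homogeneous, $V(tx) = t^{-(d-2)}V(x)$ for every $t > 0$, and the stated inequality holds with equality.

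For case (B), where $V = \mathcal{N}*h$, the function $V$ is radial, so I will write $V(r)$ for $V(x)$ with $r = |x|$. The first step is to derive a closed form via Newton's shell theorem: slicing $h$ into uniform spherical shells of radius $s$ (linear mass $h(s)\sigma_d s^{d-1}\,ds$), each such shell contributes $-c_d/r^{d-2}$ at points with $r > s$ and the constant $-c_d/s^{d-2}$ at points with $r < s$. Integrating against $h$ and using $c_d\sigma_d = 1/(d-2)$ gives
\begin{equation*}
-V(r) \;=\; \frac{1}{d-2}\left[\frac{1}{r^{d-2}}\int_0^r h(s)\, s^{d-1}\,ds \;+\; \int_r^\infty h(s)\, s\,ds\right].
\end{equation*}

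The second step is a monotonicity computation for $\varphi(r) := -r^{d-2}V(r) \geq 0$. After expanding $\varphi$, the two boundary contributions $\pm\,h(r)\, r^{d-1}/(d-2)$ produced by Leibniz' rule on the two integrals cancel exactly, leaving
\begin{equation*}
\varphi'(r) \;=\; r^{d-3}\int_r^\infty h(s)\, s\,ds \;\geq\; 0.
\end{equation*}
Thus $\varphi$ is nondecreasing, so $\varphi(tr) \geq \varphi(r)$ for every $t \geq 1$, which rearranges to $-V(tx) \geq t^{-(d-2)}(-V(x))$, equivalently $|V(tx)| \geq t^{-(d-2)}|V(x)|$. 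This is the scaling inequality as required for the applicability of Lions' theorem: in its hypothesis the attractive kernel is the nonnegative function $f = -V \geq 0$, and the inequality $f(tx)\geq t^{-(d-2)}f(x)$ is precisely what the computation above produces. The only delicate point I anticipate is sign bookkeeping: $V$ itself is nonpositive, so the inequality is naturally proved for the nonnegative magnitude $-V$, and extracting the statement as worded in terms of the signed $V$ amounts to this identification, which must be done carefully so that an ``equivalently'' step does not accidentally invert the direction of the estimate.
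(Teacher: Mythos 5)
Your proof is correct, and both cases land on the same key fact the paper uses: for $f=-V\geq 0$ the quantity $r^{d-2}f(r)$ is nondecreasing in $r$, which is exactly the scaling inequality. (You are also right to flag the sign: the statement literally says $f=V$, but the intended — and only true — version is for $f=-V\geq 0$, which is what Lions' theorem requires and what the paper's own proof silently switches to by writing $f=-\mathcal N*h$.)

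The route differs from the paper's in one structural choice. The paper decomposes $h$ by its superlevel sets: since $h$ is radially decreasing, it is approximated by nonnegative combinations of indicators $c\chi_{B(0,r)}$, and the explicit potential of a uniform ball is then checked to satisfy the inequality; the conclusion for general $h$ follows from linearity of convolution and the fact that the inequality is stable under nonnegative sums. You instead decompose $h$ into spherical shells and integrate Newton's shell theorem directly, obtaining the closed form for $-V(r)$ for arbitrary nonnegative radial $h$ in one step, and then verify $\frac{d}{dr}\bigl(r^{d-2}(-V(r))\bigr)=r^{d-3}\int_r^\infty h(s)s\,ds\geq 0$ (the boundary terms do cancel as you claim). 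Your version avoids the approximation step entirely and, notably, never uses that $h$ is radially decreasing — only that it is radial, nonnegative, and integrable with the stated moment bounds (which guarantee $\int_r^\infty h(s)s\,ds<\infty$). The paper's version is marginally more elementary in that it only ever differentiates the explicit ball potential, but it genuinely needs the layer-cake structure of a radially decreasing $h$. Both are complete; yours is the cleaner and slightly more general of the two.
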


\begin{proof}
When $V$ is given by (A) the proof is straightforward,  so suppose $V$ is given by (B). Then  $f=-\mathcal{N}*h$.  Since $h$ can be approximated by a sum of indicator functions, it suffices to prove the proposition for $f = -\mathcal{N}*\chi_{B(0,r)}$, where $\chi$ is the indicator function. In this case we have
\begin{equation} f(x) =
\begin{cases}
\cfrac{1}{2(d-2)}r^2 - \cfrac{1}{2d}|x|^2 & \text{ for }|x|\leq r,\\ 
\cfrac{1}{d(d-2)}r^d |x|^{-d+2} & \text{ for }|x|> r,
\end{cases}
\end{equation}
which finishes the proof.
\end{proof}

\begin{proposition}
Let $f$ be as in Lemma~\ref{above11}.  Suppose $m>2-\frac{2}{d}$ and let us define
$$
u = \frac{ \lambda \chi_{B(0,R)}}{c_d R^d}
$$
 where $c_d$ is the volume of the unit ball in $\mathbb{R}^d$ and $R$ is a constant to be chosen later.  If $R$ is sufficiently large, we have
$$E(u) := \int_{\mathbb{R}^d} \frac{1}{m-1}u^m dx - \frac{1}{2} \int_{\mathbb{R}^d} u(u*f) dx < 0,$$
and thus  $I_\lambda \leq E(u) < 0$.
\end{proposition}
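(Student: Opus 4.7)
The plan is a straightforward scaling argument, exploiting the critical exponent condition $m>2-2/d$ to force the repulsive $L^m$ term to decay faster than the attractive interaction term as $R\to\infty$.

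First I would write $u=u_R$ in rescaled form: setting $v(y):=\frac{\lambda}{c_d}\chi_{B(0,1)}(y)$, one has $u_R(x)=R^{-d}v(x/R)$, which has mass $\lambda$ independent of $R$. The first term is then computed exactly by a change of variables,
\[
\int_{\R^d}u_R^m\,dx = R^{-d(m-1)}\int_{\R^d} v^m\,dy = c_d^{-(m-1)}\lambda^m\,R^{-d(m-1)}.
\]
So the $L^m$-piece is of order $R^{-d(m-1)}$ as $R\to\infty$.

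Next I would bound the interaction term from below by a multiple of $R^{-(d-2)}$. Writing it as a double integral and changing variables $x=Rx',\,y=Ry'$,
\[
\int u_R(u_R*f)\,dx=\int\!\!\int v(x')v(y')\,f\bigl(R(x'-y')\bigr)\,dx'\,dy'.
\]
Now I invoke the key homogeneity bound of Proposition~\ref{above11}, namely $f(tz)\geq t^{-(d-2)}f(z)$ for all $t\geq 1$, applied with $t=R\geq 1$. This yields
\[
\int u_R(u_R*f)\,dx \;\geq\; R^{-(d-2)}\int v\,(v*f)\,dx',
\]
and the integral on the right is a strictly positive constant depending only on $\lambda$, $d$, and $f$ (positivity follows because $v\geq 0$ and $f>0$).

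Combining the two estimates,
\[
E(u_R)\;\leq\; C_1(\lambda,d,m)\,R^{-d(m-1)} \;-\; C_2(\lambda,d,f)\,R^{-(d-2)},
\]
with $C_1,C_2>0$. The assumption $m>2-2/d$ is exactly equivalent to $d(m-1)>d-2$, so the positive term decays strictly faster than the negative term. Thus $E(u_R)<0$ for all sufficiently large $R$, giving $I_\lambda\leq E(u_R)<0$ as desired.

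The only place any care is required is in invoking the homogeneity estimate $f(tx)\geq t^{-(d-2)}f(x)$ for case (B) — but this is precisely the content of Proposition~\ref{above11}, already established, so the proof is essentially a one-line comparison of exponents once the scaling identities are written down.
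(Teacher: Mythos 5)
Your proof is correct and follows essentially the same scaling strategy as the paper: compute the entropy term exactly as $\simeq R^{-d(m-1)}$, bound the interaction term from below by $\simeq R^{-(d-2)}$, and conclude from $d(m-1)>d-2$, which is the condition $m>2-2/d$. The one place you diverge is in obtaining the lower bound on the interaction term: the paper computes directly that $\int_{B(0,R/2)}(-V)\,dx\simeq R^2$ and then uses this inside the double integral, whereas you scale out the double integral entirely and invoke the homogeneity inequality $f(Rz)\geq R^{-(d-2)}f(z)$ from Proposition~\ref{above11}. Your variant is a tidy reuse of the already-established homogeneity bound and avoids the explicit kernel computation, though both lead to the identical exponent comparison in the end.
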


\begin{proof}
 First note that we have
$$
\int_{\mathbb{R}^d} \frac{1}{m-1}u^m dx = \int_{\mathbb{R}^d} \frac{1}{m-1} \Big( \frac{ \lambda \chi_{B(0,R)}}{c R^d}\Big)^m dx  \simeq \lambda^mR^{-d(m-1)}.
$$
On the other hand, $\int_{B(0,R/2)} (-V) dx \simeq  R^2$ if $R$ is sufficiently large: this implies\\
$\int_{\mathbb{R}^d} u(u*(-V)) dx \gtrsim \lambda^2 R^{-d+2}.$
Since  $m>2-\frac{2}{d}$ we have 
$$
E(u) =\int_{\mathbb{R}^d} \frac{1}{m-1}u^m dx -  \int_{\mathbb{R}^d} u(u*(-V)) dx<0.
$$
\end{proof}
\subsection{Proof of Lemma~\ref{integral_lemma}}

Observe that $\Delta V$ is nonnegative and radially decreasing, and thus it can be approximated in $L^1(\mathbb{R}^d)\cap L^\infty(\mathbb{R}^d) $ by the sum of bump functions of the form $c\chi_{B(0,r)}$, where $c>0$. By linearity of convolution, it suffices to prove that for each bump function $\chi_{B(0,r)}$, where $r$ is any positive real number, we have
\begin{equation}
(u*\chi_{B(0,r)})(b_1) - (u*\chi_{B(0,r)})(a_1) \leq \|\chi_{B(0,r)}\|_1 (u(b_1)-u(a_1)).\label{bump_u}
\end{equation}
Observe that
\begin{eqnarray}
 (u*\chi_{B(0,r)})(b_1) - (u*\chi_{B(0,r)})(a_1) &=& \int_{B(b_1,r)}u(x) dx - \int_{B(a_1,r)}u(x) dx\\
&=& \int_{\Omega_B} u(x) dx - \int_{\Omega_A} u(x)dx\label{difference_of_u}.
\end{eqnarray}

Here $\Omega_A := B(a_1, r) \backslash B(b_1, r)$ and $\Omega_B := B(b_1, r) \backslash B(a_1, r)$ (see Figure \ref{omega}).

\begin{figure}[h]\label{omega_A_B}
\begin{center}
\centerline{
\epsfysize=2in
\epsfbox{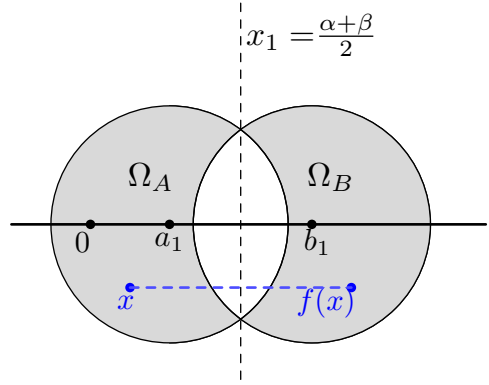}
}\end{center}
\vspace{-1cm}
\caption{The domains $\Omega_A$ and $\Omega_B$}
\label{omega}
\end{figure}

 Note that $\Omega_A$ and $\Omega_B$ are symmetric about the hyperplane $H=\{x: x_1 = \frac{\alpha+\beta}{2}\}$. For any $x\in\Omega_A$, use $f(x)$ to denote the reflection point of $x$ with respect to $H$. Then we have
$$
\int_{\Omega_B} u(x) dx - \int_{\Omega_A} u(x) dx = \int_{\Omega_A} (u(f(x)) - u(x)) dx.
$$
Since $|x|<|f(x)|$ for $x\in\Omega_A$,  we can use the assumption \eqref{assumption_ab} to obtain
$$\int_{\Omega_A} (u(f(x)) - u(x)) dx  \leq \int_{\Omega_A} (u(b)-u(a)) dx \leq |B(0,r)| (u(b)-u(a)),$$
which completes the proof.

\subsection{Proof of Lemma~\ref{technical}}
\begin{proof}
When $0<k<1$, the right hand side of \eqref{ode_a} is bounded above by $(C_1+C_2)k$. Hence if the initial data satisfies $0<k(0)<1$, the inequality $k(t) \leq k(0) e^{(C_1+C_2)t}$ will hold until $k$ reaches 1.  In other words, $k(t)$ is guaranteed to be smaller than 1 until time $t_1 := -\frac{\ln k(0)}{C_1+C_2}$.

Now if we choose $k(0)$ to be sufficiently small such that $0<k(0)<\delta$, where 
$$\delta := (\alpha C_1 C_2^{-1} 2^{-d-2})^{\frac{C_1+C_2}{\beta}},$$ then $t_1$ would be sufficiently large such that
$$C_2 2^{d+1} e^{-\beta t_1} \leq \frac{C_1 \alpha}{2}.$$
We claim $g(t) := 1+e^{-\epsilon (t-t_1)}$ is a supersolution of \eqref{ode_a} for $t \geq t_1$, where $\epsilon := \min\{\beta, \frac{1}{2}C_1\alpha\}$. It is obvious that $g(t_1) > 1 \geq k(t_1)$, so it suffices to show
\begin{equation}
g'(t) \geq C_1 g(1-g^{\alpha}) + C_2 g^{d+1}e^{-\beta t} \quad \text{for }t\geq t_1. \label{ode_g}
\end{equation}
By definition of $g$, we have
\begin{eqnarray}
\text{RHS of  \eqref{ode_g}} &\leq& -C_1 \alpha e^{-\epsilon(t-t_1)} + C_2 2^{d+1} e^{-\beta t_1} e^{-\beta (t-t_1)}\\
&\leq& -\frac{1}{2} C_1 \alpha e^{-\epsilon (t-t_1)}\\
&\leq&  -\epsilon e^{-\epsilon(t-t_1)} ~=\text{LHS of  \eqref{ode_g}} . 
\end{eqnarray}
Therefore $k(t) \leq 1+e^{-\epsilon(t-t_1)}$ for all $t\geq t_1$. 

To obtain the corresponding lower bound for $k(t)$, note that the last term of \eqref{ode_a} is non-negative. Therefore if $g$ solves $g'(t) = C_1 g(1-g^\alpha)$ and $g(0)=k(0)$, then $|g(t) -1| \lesssim e^{-C_1\alpha t}$. Comparison between these two ODEs yields $k(t) \geq g(t)$ for all $t \geq 0$, which implies $k(t) \geq 1-Ce^{-C_1 \alpha t}$.  Now we can conclude that there exists $C$ depending on $C_1, C_2, \alpha,\beta$ and $k(0)$ such that 
$$
|k(t)-1| \leq C e^{-\epsilon t}.
$$
\end{proof}
\subsection{Proof of Proposition~\ref{itd_conv}}
The proof of Proposition~\ref{itd_conv} is an application of the Crandall-Liggett Theorem (\cite{cl}, also see Theorem 10.16 in \cite{v}). Let us consider the following domain:
\begin{equation} \label{d_def}
D: = \Big\{ u\in L^1(\mathbb{R}^d): u^m \in W^{1,1}_{\text{loc}}(\mathbb{R}^d), \Delta u^m \in L^1(\mathbb{R}^d),  |\nabla u^m| \in M^{d/(d-1)}(\mathbb{R}^d)\Big\}.
\end{equation}

Here the Marcinkiewicz space $M^p(\mathbb{R}^d), 1<p<\infty$, is defined as the set of $f\in L^1_{loc}(\mathbb{R}^d)$ such that
$$\int_{K} |f(x)| dx \leq C|K|^{(p-1)/p},$$
for all subsets K of finite measure.  The minimal $C$ in the above inequality gives a norm in this space, i.e.
$$\|f\|_{M^p(\mathbb{R}^d)} = \sup \Big \{ \text{meas}(K)^{-(p-1)/p} \int_K |f| dx: K\subset \mathbb{R}^d, \text{meas}(K)>0 \Big \}.$$

A parallel argument as in Theorem 2.1 of \cite{bbc} yields the existence of solutions for the discretized equation.
\begin{lemma}[Existence]\label{existence}
Let $d\geq 3$ and let $u_0\in L^1(\R^d), \Phi \in C^2(\mathbb{R}^d)$. Then there exists a unique weak solution $u\in D$ of the following equation:
\begin{equation}\label{pmedrift_onestep}
\frac{u-u_0}{h} = \Delta u^m + \nabla \cdot (u \nabla \Phi).
\end{equation}
\end{lemma}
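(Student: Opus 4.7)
The strategy is to adapt Theorem~2.1 of \cite{bbc}, which handles the case $\Phi\equiv 0$, by treating the linear drift $-\nabla\cdot(u\nabla\Phi)$ as a controlled perturbation. Since $\Phi\in C^2(\R^d)$, both $\nabla\Phi$ and $\Delta\Phi$ are locally bounded, which is all we need. I would regularize by replacing $u^m$ with $u^m+\e u$ and first solve on a ball $B_R(0)$ with zero Dirichlet data, then let $R\to\infty$ and $\e\to 0$.

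On the bounded ball, the regularized problem
\begin{equation*}
u_\e - h\Delta(u_\e^m+\e u_\e) - h\nabla\cdot(u_\e\nabla\Phi) = u_0
\end{equation*}
is uniformly elliptic and quasilinear. I would obtain a nonnegative weak solution via a Schauder fixed-point argument applied to the map $v\mapsto u$, where $u$ solves the linearized equation $u - h\Delta(u^m+\e u) = u_0 + h\nabla\cdot(v\nabla\Phi)$; standard quasilinear elliptic theory provides the solvability of the auxiliary problem, and compactness of the map follows from Sobolev embedding. Multiplying by $\operatorname{sgn}(u)$ and integrating gives $\|u\|_1\leq\|u_0\|_1$, since the drift contributes $\int\nabla\cdot(|u|\nabla\Phi)\,dx = 0$ after integration by parts. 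An $L^\infty$ bound of the form $\|u\|_\infty \leq \|u_0\|_\infty e^{Ch}$ with $C$ depending on $\|\Delta\Phi\|_\infty$ follows from comparison with a spatially constant supersolution.

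The critical step is the Marcinkiewicz estimate $|\nabla u^m|\in M^{d/(d-1)}$, uniform in $\e$ and $R$. Here I would follow the truncation argument of \cite{bbc}: test the equation with $(u^m - k)_+$ to obtain level-set estimates, and absorb the drift contribution $h\int u\,\nabla\Phi\cdot\nabla(u^m-k)_+\,dx$ via Young's inequality into the diffusion term; this produces a multiplicative constant depending only on $h$ and $\|\nabla\Phi\|_\infty$, and the resulting decay of the distribution function of $|\nabla u^m|$ yields the Marcinkiewicz membership. With uniform $L^1$, $L^\infty$ and Marcinkiewicz bounds in hand, passing first $R\to\infty$ and then $\e\to 0$ via standard monotonicity and $L^1_{\mathrm{loc}}$-compactness yields $u\in D$ solving the equation.

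For uniqueness, given two solutions $u_1,u_2$, set $w = u_1 - u_2$, multiply by $\operatorname{sgn}(w)$ and integrate. Kato's inequality yields $-\int\Delta(u_1^m-u_2^m)\operatorname{sgn}(w)\,dx \geq 0$, while the drift contribution
\begin{equation*}
-\int \nabla\cdot(w\nabla\Phi)\operatorname{sgn}(w)\,dx = -\int \nabla\cdot(|w|\nabla\Phi)\,dx = 0
\end{equation*}
vanishes (using $\nabla|w|=\operatorname{sgn}(w)\nabla w$ distributionally), so $\int|w|\,dx\leq 0$ and hence $w=0$. The main obstacle is establishing the Marcinkiewicz estimate with a constant that remains uniform in the regularization parameters; this is where the drift term requires the most care, since naive testing picks up a factor that could blow up as $\e\to 0$ if not handled carefully via the absorption argument sketched above.
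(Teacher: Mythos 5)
The paper's own treatment of this lemma is a single sentence: ``A parallel argument as in Theorem~2.1 of \cite{bbc} yields the existence.''  Your proposal attempts to spell out that parallel argument, and your high-level scheme (regularize the diffusion by $u^m+\e u$, solve on balls $B_R$, obtain a solution there by Schauder, derive $L^1$/$L^\infty$ and Marcinkiewicz bounds uniformly in $\e,R$, then pass to the limit and prove $L^1$-contraction for uniqueness) is the natural route and consistent with what the citation implies.  That said, your route diverges from \cite{bbc} at the most delicate point, and there it leaves a real gap.

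In \cite{bbc} the Marcinkiewicz membership is a \emph{corollary} of the stronger fact $\Delta\varphi(u)\in L^1$: there one has $\Delta\varphi(u)=u-f$, so $\|\Delta\varphi(u)\|_1\le 2\|f\|_1$ is immediate, and then $\varphi(u)\in M^{d/(d-2)}$, $|\nabla\varphi(u)|\in M^{d/(d-1)}$ follow by writing $\varphi(u)$ as a Newtonian potential and using weak-Young.  In our setting, the definition of $D$ in \eqref{d_def} likewise demands $\Delta u^m\in L^1$, and this is precisely where the drift causes trouble: from the equation, $h\Delta u^m=u-u_0-h\nabla\cdot(u\nabla\Phi)$, and $\nabla\cdot(u\nabla\Phi)=u\Delta\Phi+\nabla u\cdot\nabla\Phi$ is \emph{not} obviously in $L^1$, because $\nabla u=\tfrac1m u^{1-m}\nabla u^m$ can fail to be locally integrable near $\{u=0\}$ even when $\nabla u^m$ is nice.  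Your truncation-testing argument, even if it produces a distribution-function decay for $|\nabla u^m|$, does not address $\Delta u^m\in L^1$ at all, so it does not establish membership in $D$.  This is the step that requires the most care and should be the centerpiece of a full proof; one should show $\Delta u^m\in L^1$ first (e.g.\ via the approximation at the $\e$-level, where $u_\e$ is smooth and the identity $h\Delta(u_\e^m+\e u_\e)=u_\e-u_0-h\nabla\cdot(u_\e\nabla\Phi)$ can be estimated directly using the $L^1$- and $L^\infty$-bounds and $\Delta\Phi\in L^\infty$), and then recover the Marcinkiewicz bounds from it via potential theory as in \cite{bbc}, rather than the other way around.

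Two smaller points.  First, the claim ``locally bounded is all we need'' for $\nabla\Phi,\Delta\Phi$ is too weak: to pass $R\to\infty$ with uniform constants, and to make the $L^\infty$ comparison with a spatially constant barrier and the Young absorption $\int_{\{u^m>k\}}u^2|\nabla\Phi|^2$ work, you need \emph{global} $L^\infty$ bounds on $\nabla\Phi$ and $\Delta\Phi$ (these do hold in the paper's intended application, cf.\ \eqref{bd1}--\eqref{bd2}, but you should state them).  Second, your $L^\infty$ bound and the Young-absorption step require $u_0\in L^\infty$, whereas the lemma assumes only $u_0\in L^1$; this is standard to repair by first taking $u_0\in L^1\cap L^\infty$ and then using the $L^1$-contraction (Lemma~\ref{contraction}) you establish to pass to general $u_0\in L^1$, but this final density step should be made explicit.
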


The proof of the next lemma  is parallel to that of Prop 3.5 in \cite{v} for \eqref{pme}. 

\begin{lemma}[$L^1$ contraction] \label{contraction} Let $\Phi \in C^2(\mathbb{R}^d)$, $u_{0i}\in L^1(\R^d)$  and let $u_1, u_2 \in D$ be the weak solutions to the degenerate elliptic equation
\begin{equation}\label{elliptic_pmedrift}
\frac{u_i - u_{0i}}{h} = \Delta (u_i)^m + \nabla \cdot (u_i \nabla \Phi),~~ i= 1,2.
\end{equation}
Then  $u_1$ and $u_2$ satisfy
\begin{equation}
\|u_1 - u_2\|_{L^1(\mathbb{R}^d)} \leq \|u_{01} - u_{02}\|_{L^1(\mathbb{R}^d)}.
\end{equation}
\end{lemma}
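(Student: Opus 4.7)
The strategy is to establish the $L^1$-accretivity of the elliptic operator $Au := -\Delta u^m - \nabla\cdot(u\nabla\Phi)$ via a Kato-type sign argument, which immediately yields the $L^1$-contraction of the resolvent. Subtracting the two equations and setting $v := u_1 - u_2$, $\Psi := u_1^m - u_2^m$, $W := u_{01} - u_{02}$, we obtain
\begin{equation*}
v - h\Delta\Psi - h\nabla\cdot(v\nabla\Phi) = W.
\end{equation*}
Two observations are used throughout: since $u_i \ge 0$ and $s \mapsto s^m$ is strictly increasing on $[0,\infty)$, we have $\operatorname{sgn}(v) = \operatorname{sgn}(\Psi)$; and the elementary inequality $(a-b)^m \le a^m - b^m$ (valid for $a \ge b \ge 0$, $m \ge 1$) gives the pointwise bound $|v| \le |\Psi|^{1/m}$.

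Formally, multiplying by $\operatorname{sgn}(v)$ and integrating over $\mathbb{R}^d$ yields $\|v\|_1 \le \|W\|_1$ via three ingredients: (i) $\int W\operatorname{sgn}(v)\,dx \le \|W\|_1$; (ii) Kato's inequality $\Delta|\Psi| \ge \operatorname{sgn}(\Psi)\Delta\Psi$ together with $|\Psi| \in L^1(\mathbb{R}^d)$ gives $\int \operatorname{sgn}(\Psi)\,\Delta\Psi\,dx \le 0$; (iii) the identity $\operatorname{sgn}(v)\,\nabla\cdot(v\nabla\Phi) = \nabla\cdot(|v|\nabla\Phi)$, which integrates to zero under suitable decay of $|v|\nabla\Phi$ at infinity.

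To rigorize, I would test the subtracted equation against $p_\epsilon(\Psi)$, where $p_\epsilon$ is a smooth nondecreasing approximation of $\operatorname{sgn}$ with $|p_\epsilon| \le 1$ and $p_\epsilon(0)=0$. The source term converges to $\le \|W\|_1$ and the LHS to $\|v\|_1$ by dominated convergence, using $\operatorname{sgn}(v)=\operatorname{sgn}(\Psi)$. Integration by parts is justified since $\Psi \in W^{1,1}_{\rm loc}$ and yields $h\!\int \Delta\Psi\,p_\epsilon(\Psi)\,dx = -h\!\int p_\epsilon'(\Psi)|\nabla\Psi|^2\,dx \le 0$. The drift term becomes $-h\!\int v\nabla\Phi\cdot p_\epsilon'(\Psi)\nabla\Psi\,dx$, and since $p_\epsilon'(\Psi)$ is supported on $\{|\Psi|<\epsilon\}$ where $|v|\le \epsilon^{1/m}$, Cauchy--Schwarz gives
\begin{equation*}
\Big|\!\int v\nabla\Phi\cdot p_\epsilon'(\Psi)\nabla\Psi\,dx\Big| \le \|\nabla\Phi\|_\infty \Big(\!\int v^2 p_\epsilon'(\Psi)\,dx\Big)^{\!1/2} \Big(\!\int p_\epsilon'(\Psi)|\nabla\Psi|^2\,dx\Big)^{\!1/2},
\end{equation*}
with $|v|\le |\Psi|^{1/m}$ implying $\int v^2 p_\epsilon'(\Psi)\,dx \le \int |\Psi|^{2/m} p_\epsilon'(\Psi)\,dx \to 0$ as $\epsilon\to 0$ via the pushforward estimate $\int_{\mathbb{R}} |\sigma|^{2/m} p_\epsilon'(\sigma)\,d\sigma = O(\epsilon^{2/m})$.

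The principal obstacle is the rigorous vanishing of this drift term, stemming from the weak regularity of $v$: only $u_i^m \in W^{1,1}_{\rm loc}$, so $v$ itself need not belong to any Sobolev class and the classical Kato identity $\nabla|v| = \operatorname{sgn}(v)\nabla v$ is not directly available. If the Cauchy--Schwarz estimate above proves delicate to close without further structural assumptions (e.g. on the distribution function of $\Psi$ near zero), the cleaner alternative is to regularize by replacing $u^m$ with $G_\delta(u) := u^m + \delta u$; the resulting uniformly elliptic equation has classically smooth solutions $u_i^\delta$ for which the Kato identity applies rigorously, yielding the contraction $\|v^\delta\|_1 \le \|W\|_1$. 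Passing to the limit $\delta\to 0$ via the stability provided by Lemma~\ref{existence} then concludes.
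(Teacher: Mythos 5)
The paper supplies no proof of this lemma beyond citing Proposition 3.5 of \cite{v} (which treats \eqref{pme} without drift), so your proposal has to stand on its own. Your formal outline is the right one, and the concern you raise about the Kato identity for $v$ (only $u_i^m$, not $u_i$, is a priori in $W^{1,1}_{\mathrm{loc}}$) is genuine. The gap, however, is in the claimed vanishing of the drift contribution. The pointwise bound $|v|\le |\Psi|^{1/m}$ together with the one-dimensional estimate $\int_{\mathbb{R}}|\sigma|^{2/m}p_\epsilon'(\sigma)\,d\sigma=O(\epsilon^{2/m})$ does \emph{not} yield $\int_{\mathbb{R}^d}|\Psi|^{2/m}p_\epsilon'(\Psi)\,dx\to 0$: transferring the one-dimensional estimate to $\mathbb{R}^d$ requires the pushforward of Lebesgue measure (restricted to $\{\Psi\ne 0\}$) under $\Psi$ to have bounded density near the origin, which is an extra regularity property of the level sets of $\Psi$ not guaranteed by $u_i\in D$. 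Concretely, with the piecewise-linear $p_\epsilon$ one only has $\int|\Psi|^{2/m}p_\epsilon'(\Psi)\,dx\le \epsilon^{2/m-1}\,|\{0<|\Psi|<\epsilon\}|$; for $m>2$ this bound tends to blow up unless the measure of the thin sublevel set decays with $\epsilon$, and even for $m\le 2$ it need not vanish if that measure is not uniformly bounded. So the Cauchy--Schwarz/AM--GM argument does not close as written, and no ``pushforward estimate'' repairs it without additional hypotheses.

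The regularization $u^m\mapsto u^m+\delta u$ is indeed the standard remedy and is the right move, but it is only named, not carried out. For $u_0\in L^1$ the regularized $u_i^\delta$ need not be classically smooth; what one actually needs (and can get from uniform ellipticity) is $v^\delta\in W^{1,2}_{\mathrm{loc}}$, which suffices for the Kato identity. Testing with $p_\epsilon(v^\delta)$ then reduces the drift term to $\int q_\epsilon(v^\delta)\,\Delta\Phi\,dx$ with $q_\epsilon'(s)=s\,p_\epsilon'(s)$, and one needs $\Delta\Phi$ bounded together with $|q_\epsilon|\le \min(|\cdot|,\epsilon/2)$ and dominated convergence to make this vanish. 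Finally, the passage $\delta\to 0$ is not provided by Lemma~\ref{existence} (an existence statement, not a stability one) --- a separate argument that $u_i^\delta\to u_i$ in $L^1$ is required. Until these pieces are supplied, the proof is incomplete: the obstruction is correctly diagnosed, but neither route is closed.
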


\textbf{Proof of Proposition~\ref{itd_conv}}
\begin{proof}
Let $D$ be defined above, and define the nonlinear operator $\mathcal{A}: D \to L^1(\mathbb{R}^d)$  by 
$$\mathcal{A}(u) = -\Delta u^m - \nabla(u\nabla\Phi),$$
Then Lemma \ref{existence} and Lemma \ref{contraction} yield that for any $h>0$, there is a unique solution $u$ in $D$ solving
$$h\mathcal{A}(u) + u = f.$$
Moreover the map $f \mapsto u$ is a contraction in $L^1(\mathbb{R}^d)$. Now arguing as in \cite{v}, the Crandall-Liggett Theorem yields the conclusion.
\end{proof}

\subsection{Proof of Proposition~\ref{rearrangement_comp_one_step}}
The proof of Proposition~\ref{rearrangement_comp_one_step} is parallel to that of Theorem 11.7 in \cite{v} for \eqref{pme}.  First we state a lemma which deals with the extra convolution term.

\begin{lemma} \label{rearrangement_comp} Let $V$ be given by (B). Let $f\in L^1(\mathbb{R}^d)$ and $\phi\in W_0^{1,\infty}(\mathbb{R}^d) $ be non-negative functions. Then for any non-negative number $a,b$, we have
\begin{equation}
\int_{\{a<\phi<b\}} \nabla(f *(-V))\cdot \nabla \phi \leq \int_{\{\phi^*>a\}} (f^* * \Delta V) (\max\{\phi^*, b\} -a), \label{comp_integral}
\end{equation}
where the equality is achieved if $f, \phi$ are both radially decreasing.
\end{lemma}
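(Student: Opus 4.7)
The plan is to reduce the left-hand side to an integral of $f*\Delta V$ against a Lipschitz truncation of $\phi$ via integration by parts, and then to apply a pointwise truncation bound followed by Riesz's rearrangement inequality. The crucial structural fact being exploited is that under hypothesis (B) the function $\Delta V = h$ is nonnegative and radially decreasing, so that $(\Delta V)^* = \Delta V$.

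First I would introduce the truncation
\begin{equation*}
\psi(x) := (\min\{\phi(x),b\} - a)_+ \in W^{1,\infty}_0(\R^d),
\end{equation*}
which satisfies $\nabla \psi = \nabla \phi \cdot \chi_{\{a < \phi < b\}}$ a.e. Since $V = \mathcal{N} * h$ with $\Delta \mathcal{N} = \delta$, one has $\Delta(f*(-V)) = -f*h \in L^1(\R^d)$ in the distributional sense. Because $\psi$ has compact support, integration by parts yields
\begin{equation*}
\int_{\{a<\phi<b\}} \nabla (f*(-V)) \cdot \nabla \phi = \int_{\R^d} \nabla (f*(-V)) \cdot \nabla \psi = \int_{\R^d} (f*\Delta V)\,\psi.
\end{equation*}

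Second, a case analysis on $\phi \leq a$, $a < \phi \leq b$, and $\phi > b$ gives the pointwise bound $\psi \leq T(\phi)$, where $T(s) := (\max\{s,b\} - a)\chi_{\{s>a\}}$. Since $f*\Delta V \geq 0$, this yields $\int (f*\Delta V)\psi \leq \int (f*\Delta V)\,T(\phi)$. The function $T:[0,\infty)\to[0,\infty)$ is non-decreasing, so $T(\phi^*)$ is already radially decreasing and equimeasurable with $T(\phi)$, giving $T(\phi)^* = T(\phi^*) = (\max\{\phi^*,b\} - a)\chi_{\{\phi^*>a\}}$. Applying Riesz's rearrangement inequality $\int (u*v)\,w \leq \int (u^**v^*)\,w^*$ with $u = f$, $v = \Delta V$, $w = T(\phi)$, and invoking $(\Delta V)^* = \Delta V$, I obtain
\begin{equation*}
\int (f*\Delta V)\,T(\phi) \leq \int (f^**\Delta V)\,T(\phi^*) = \int_{\{\phi^*>a\}} (f^**\Delta V)\,(\max\{\phi^*,b\} - a),
\end{equation*}
which is the desired inequality.

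The main technical obstacle is justifying the integration by parts rigorously, since $f*(-V)$ is only locally integrable in general; however the combination of $f*h \in L^1(\R^d)$, the compact support of $\psi$, and $\psi \in W^{1,\infty}$ is enough to validate the identity used (with no boundary contribution). The equality claim for radially decreasing $f, \phi$ is the subtlest point: Riesz's inequality is automatically tight (since $f^* = f$ and $(\Delta V)^* = \Delta V$), and $T(\phi) = T(\phi^*)$, but the crude pointwise truncation $\psi \leq T(\phi)$ is not tight pointwise and must be replaced by an identity obtained through the level-set structure of the radial monotone function $\phi$ (via the co-area formula), which restores equality in the symmetrized case.
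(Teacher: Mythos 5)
Your proof of the inequality itself is correct and follows essentially the same path as the paper's: truncate $\phi$, integrate by parts (the paper does this implicitly), and apply Riesz. Your $\psi=(\min\{\phi,b\}-a)_+$ is the right truncation (the paper's $\eta$, set equal to $b$ on $\{\phi\geq b\}$, is evidently meant to be $b-a$ there, since otherwise $\eta$ is discontinuous across $\{\phi=b\}$ and not in $W^{1,\infty}_0$). Whether one applies the pointwise estimate $\psi\leq T(\phi)$ before or after Riesz is immaterial.

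The equality claim is where your proposal has a genuine gap. You correctly notice that $\psi\leq T(\phi)$ is not tight pointwise, but your suggestion that a co-area/level-set argument would ``restore equality'' does not actually work. For radially decreasing $f$ and $\phi$ the left-hand side equals $\int (f*\Delta V)\,\psi$ with $\psi=(\min\{\phi,b\}-a)_+$, while the right-hand side equals $\int (f*\Delta V)\,(\max\{\phi,b\}-a)\,\chi_{\{\phi>a\}}$; their difference is $\int_{\{\phi>b\}} (f*\Delta V)(\phi-b)\,dx$, which is strictly positive whenever $\{\phi>b\}$ has positive measure and $f\not\equiv 0$. So the stated equality with $\max$ cannot hold, and no co-area rearrangement of the argument will change that. (Reading $\min$ in place of $\max$ in \eqref{comp_integral} makes the lemma consistent — with $\min$, the truncation $\psi$ coincides exactly with the integrand on the right when $\phi=\phi^*$ and $f=f^*$, and equality is immediate — and this reading also matches the way the estimate is invoked downstream in the proof of Proposition~\ref{rearrangement_comp_one_step}, where the $\tfrac{1}{\epsilon}$ limit would otherwise diverge.) In short: your inequality argument is fine, but your handling of the equality case is not, and you should flag rather than paper over the discrepancy.
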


\begin{proof}
Let us define $\eta:\mathbb{R}^d\to \R$ by
\begin{equation*}
\eta(x):=\begin{cases}
b & \text{if } \phi(x)\geq b,\\
\phi(x)-a & \text{if } a<\phi(x)<b,\\
0 & \text{if } \phi(x)\leq a.
\end{cases}
\end{equation*}
Then $\eta(x) \in W_0^{1,\infty}(\mathbb{R}^d) $, $\nabla \phi = \nabla \eta$ in $\{a<\phi(x)<b\}$, and $\nabla \eta =0$ in $\mathbb{R}^d \backslash \{a<\phi(x)<b\}$. Therefore
\begin{eqnarray*}
\text{LHS of \eqref{comp_integral}}
&=&\int_{\mathbb{R}^d} \nabla(f *(-V))\cdot \nabla \eta\\
&\leq& \int_{\mathbb{R}^d} (f^* *\Delta V) \eta^*= \int_{\{\phi^*>a\}} (f^* * \Delta V) (\max\{\phi^*, b\} -a),
\end{eqnarray*}
where the inequality comes from Riesz's rearrangement inequality.  Note that we obtain an equality if $f=f^*$ and $\eta=\eta^*$. Hence the lemma is proved.
\end{proof}

The following lemma corresponds to Theorem 17.5 in \cite{v}.

\begin{lemma} \label{rearrangement_comparison} Let $V$ be given by (B). Let $f, \bar f$ and $g$ be non-negative radially decreasing functions in $L^1(\mathbb{R}^d)$, where $ f \prec \bar f$.  Let $h>0$, and let $v_1, v_2\in D$ be two non-negative radial decreasing functions.  Assume $v_1$ and $v_2$ satisfies
\begin{equation}\label{eq1}
-h \Delta (v_1)^m - h \nabla \cdot (v_1 \nabla (f * V)) + v_1 \prec g,
\end{equation}
\begin{equation} \label{eq2}
-h \Delta (v_2)^m - h \nabla \cdot (v_2 \nabla (\bar f * V)) + v_2 = g.
\end{equation}
Then we have $v_1 \prec v_2$.
\end{lemma}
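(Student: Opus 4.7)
The plan is to translate \eqref{eq1}--\eqref{eq2} into mass-function inequalities and then run a maximum-principle argument on $w(r):=M(r;v_1)-M(r;v_2)$, in the spirit of Proposition~\ref{comp_concentration}. Writing $M_i(r):=M(r;v_i)$ and integrating both relations over $B(0,r)$ using the divergence theorem together with the radial identity $\partial_r(f*V)=\tilde M(r;f)/(\sigma_d r^{d-1})$ from \eqref{nabla_rho*V_2}, the ``$\prec g$'' in \eqref{eq1} and the equality in \eqref{eq2} yield, for every $r>0$,
$$
M_1(r)-h\sigma_d r^{d-1}(v_1^m)'(r)-h\,v_1(r)\,\tilde M(r;f)\;\leq\;M_g(r),
$$
$$
M_2(r)-h\sigma_d r^{d-1}(v_2^m)'(r)-h\,v_2(r)\,\tilde M(r;\bar f)\;=\;M_g(r).
$$

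Next I study $w$. Clearly $w(0)=0$; sending $r\to\infty$, the divergence terms vanish and the two relations give $\int v_1\leq\int g=\int v_2$, so $w(\infty)\leq 0$. If the conclusion failed, $w$ would attain a positive maximum at some $r_1\in(0,\infty)$. The conditions $w'(r_1)=0$ and $w''(r_1)\leq 0$, combined with $M_i'(r)=\sigma_d r^{d-1}v_i(r)$, force $v_1(r_1)=v_2(r_1)$ and $v_1'(r_1)\leq v_2'(r_1)$, hence $(v_1^m)'(r_1)\leq(v_2^m)'(r_1)$. Subtracting the two mass relations at $r_1$ then leaves three non-negative terms on the left whose sum must be $\leq 0$:
$$
\underbrace{-h\sigma_d r_1^{d-1}\bigl[(v_1^m)'-(v_2^m)'\bigr](r_1)}_{\geq\,0}\;+\;\underbrace{-h\,v_1(r_1)\bigl[\tilde M(r_1;f)-\tilde M(r_1;\bar f)\bigr]}_{\geq\,0}\;+\;\underbrace{w(r_1)}_{>\,0}\;\leq\;0,
$$
a contradiction. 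The non-negativity of the middle term uses $\tilde M(r;f)\leq\tilde M(r;\bar f)$: since $\tilde M(r;\cdot)=\int(\cdot)\,(\chi_{B(0,r)}*\Delta V)$ and the convolution of the two nonnegative radially decreasing functions $\chi_{B(0,r)}$ and $\Delta V$ is again radially decreasing, this reduces to the standard fact that integrating two mass-ordered radially decreasing densities ($f\prec\bar f$) against a radially decreasing test function is monotone, via the layer-cake decomposition (essentially the bump-function argument in the proof of Lemma~\ref{integral_lemma}, or a direct application of Lemma~\ref{rearrangement_comp}).

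The main obstacle I expect is the regularity needed at $r_1$: the pointwise use of $w'$ and $w''$ requires $v_i\in C^1$ near $r_1$, which holds on $\{v_i>0\}$ (where the equation is locally uniformly elliptic for $v_i\in D$) but degenerates on the free boundary. The remedy is the usual one: replace $v^m$ by a uniformly elliptic approximation (e.g.\ $(v+\varepsilon)^m-\varepsilon^m$, possibly combined with a mollified kernel $V_\varepsilon$), for which the pointwise maximum-principle argument runs verbatim and the approximate minimizers are smooth; one then passes to $\varepsilon\to 0$ using the $L^1$-contraction property from Lemma~\ref{contraction}. A minor side case to dispose of separately is when $r_1\notin\mathrm{supp}(v_1)$: there $M_1$ is locally constant, so $w'(r_1)=-\sigma_d r_1^{d-1}v_2(r_1)\leq 0$, and the fact that $w$ is non-increasing past $r_1$ forces $w(r_1)\leq w(\infty)\leq 0$, again contradicting $w(r_1)>0$.
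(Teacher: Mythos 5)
Your overall route is the same as the paper's: integrate the one-step elliptic relations over $B(0,r)$ using \eqref{nabla_rho*V_2} to obtain mass-function inequalities, then argue by contradiction at a hypothetical positive maximum $r_1$ of $w:=M_1-M_2$, with $w'(r_1)=0$ (i.e.\ $v_1(r_1)=v_2(r_1)$) isolating the drift-difference $\tilde M(r_1;f)-\tilde M(r_1;\bar f)\leq 0$ from $f\prec\bar f$. The one structural difference is how you sign the elliptic term: you invoke the second-order condition $w''(r_1)\leq 0$ to get $(v_1^m)'(r_1)\leq(v_2^m)'(r_1)$, whereas the paper avoids $w''$ entirely. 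Setting $u:=v_1^m-v_2^m$, it reads $u'(r_0)>0$ off the subtracted mass inequality at the max, then notes that $u(r_0)=0$ together with $u'(r_0)>0$ forces $v_1>v_2$ just past $r_0$, so $w$ is increasing there---contradicting maximality. That version needs only first-order information on $u$, which matches better the regularity coming from the domain $D$ (where $\Delta u^m\in L^1$ is what is actually available); it also absorbs your side case automatically, since $v_1$ radially decreasing and $v_1(r_0)=0$ makes $v_1>v_2\geq 0$ impossible past $r_0$.

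There is a genuine (though easily fixable) error in your side case: if $w$ is non-increasing past $r_1$, then $w(r_1)\geq w(\infty)$, not $w(r_1)\leq w(\infty)$, so the stated chain does not produce the contradiction. What is actually true is stronger: $w'(r_1)=0$ forces $v_2(r_1)=0$ as well, and then radial monotonicity of both $v_i$ gives $v_1\equiv v_2\equiv 0$ for $r\geq r_1$, so $w$ is \emph{constant} there and $w(r_1)=w(\infty)\leq 0$, the desired contradiction. With that correction the argument is sound.
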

\begin{proof}
Let $u_i:= v_i^m$ and define $u:= u_1-u_2$,  $v:= v_1 - v_2, $ $A(r):=\int_{B(0,r)} v(x) dx.$ Our goal is to show $A(r)\leq 0$ for all $r\geq 0$.

Subtracting \eqref{eq1} from \eqref{eq2}, and integrating the quantity in $B(0,r)$ yields that
\begin{equation}
\int_{B(0,r)}-h\Delta udx - h\Big( v_1(r) \int_{B(0,r)} f*\Delta V dx - v_2(r)\int_{B(0,r)} \bar f*\Delta V dx\Big) + A(r) \leq 0,
\end{equation}
which can be written as
\begin{equation}
-h c_d r^{d-1} u'(r) - h v(r) \int_{B(0,r)} f*\Delta Vdx - h v_2(r) \int_{B(0,r)} \big(f- \bar f)*\Delta V dx + A(r)   \leq 0.
\end{equation}
(Here $u'(r)$ exists due to the fact that $v_i \in D$ for $i = 1,2$, which implies that $\Delta u$ is in $L^1$.)  Since we assume $f \prec \bar f$, it follows that $\int_{B(0,r)} ((f-\bar f)*\Delta V) dx \leq 0$ for all $r\geq 0$. 
Therefore
\begin{equation} \label{new_eq_u_v}
-h c_d r^{d-1} u'(r)  - h v(r) \int_{B(0,r)} f*\Delta V + A(r) \leq 0~\text{ for all } r\geq 0.
\end{equation}
 Note that since $u_i$ and $v_i$ both vanish at infinity,  from \eqref{new_eq_u_v} it follows that $\lim_{r\to\infty} A(r)\leq 0$.  Hence if $A(r)$ is positive somewhere, it achieves its positive maximum at some point $r_0>0$. At $r=r_0$ we have $v(r_0)=A'(r_0)=0$, and \eqref{new_eq_u_v} becomes
\begin{equation*}
u'(r_0) \geq \frac{A(r_0)}{hc_d r^{d-1}} >0,
\end{equation*}
which means $u_2-u_1$ is strictly increasing at $r_0$: hence $v_2-v_1$ will also be strictly positive in $(r_0, r_0+\epsilon)$ for some small $\epsilon$, which implies $A(r_0+\epsilon)>A(r_0)$. This contradicts our assumption that $A(r)$ achieves its maximum at $r_0$.  Therefore $A(r)\leq 0$ for all $r$, which means $v_2\prec v_1 $.
\end{proof}

\textbf{Proof of Proposition~\ref{rearrangement_comp_one_step}:} The proof is parallel to that of Theorem 11.7 as in \cite{v}. For any test function $\phi\in W_0^{1,\infty}(\mathbb{R}^d)$, we have
\begin{equation} \label{variational}
h\int_{\mathbb{R}^d} \nabla u^m \cdot \nabla \phi + h \int_{\mathbb{R}^d} u \nabla(f*V)\cdot \nabla \phi + \int_{\mathbb{R}^d} u \phi = \int_{\mathbb{R}^d}g\phi,
\end{equation}
where $\phi \in W_0^{1,\infty}(\mathbb{R}^d)$ is any test function. Now let us take $\phi(x) := (u^m(x)-t)_+$ where $t>0$,  and differentiate the equation with respect to $t$. Then we have:
\begin{equation}\label{I}
-\underbrace{h(\frac{d}{dt}\int_{\{u^m>t\}} |\nabla u^m|^2)}_{I_1} - \underbrace{h(\frac{d}{dt}\int_{\{u^m>t\}} \frac{m}{m+1} \nabla(f*V)\cdot \nabla (u^{m+1}))}_{I_2} + \underbrace{\int_{\{u^m>t\}}u}_{I_3} =  \underbrace{\int_{\{u^m>t\}}g}_{I_4} .
\end{equation}
Following the proof of Theorem 17.7 in \cite{v}, one can check that
\begin{eqnarray*}
I_1 &\leq& \int_{\{(u^*)^m>t\}} h\Delta ((u^*)^m)~~\text{(with equality if $u\equiv u^*$)},\\
 I_3 &=&  \int_{\{(u^*)^m>t\}}u^*,\\
 I_4 &\leq& \sup_{|\Omega|=|\{u^m>t\}|}\int_\Omega g^* = \int_{\{(u^*)^m>t\}}g^* .
 \end{eqnarray*}
It remains to examine $I_2$. Using Lemma \ref{rearrangement_comp}, it follows that
\begin{eqnarray*}
I_2 &=& h \lim_{\epsilon\to 0} \frac{1}{\epsilon} \int_{\{t<u^m<t+\epsilon\}} \frac{m}{m+1} \nabla(f*(-V))\cdot \nabla (u^{m+1})\\
&\leq& h\liminf_{\epsilon\to 0} \frac{1}{\epsilon} \int_{\{t<(u^*)^m<t+\epsilon\}}\frac{m}{m+1} (f^* * \Delta V)(\max\{u^{m+1}, (t+\epsilon)^{1+\frac{1}{m}} \} - t^{1+\frac{1}{m}})_+\\
&=& h t^{\frac{1}{m}} \int_{\{(u^*)^m>t\}} f^* * \Delta V.
\end{eqnarray*}
Plugging in the four inequalities into \eqref{I}, the following inequality holds for all $t\geq0$:
\begin{equation}
-\int_{\{(u^*)^m>t\}} h\Delta ((u^*)^m) - ht^\frac{1}{m} \int_{\{(u^*)^m>t\}} f^* * \Delta V +  \int_{\{(u^*)^m>t\}}u^* \leq  \int_{\{(u^*)^m>t\}}g^*.
\end{equation}
Since $t\geq0$ is arbitrary, the above inequality implies
\begin{equation}
-h\Delta ((u^*)^m) - h\nabla \cdot (u^* \nabla(f^* * V)) + u^* \prec g^*.
\end{equation}
On the other hand, by assumption, $\bar u$ solves
\begin{equation}
-h\Delta (\bar u^m) - h\nabla \cdot (\bar u \nabla(\bar f * V)) + \bar u = \bar g,
\end{equation}
where $\bar f \succ f^*$ and $\bar g \succ g^*$.
Note that $u \in D$ implies $u^* \in D$. So we can apply Lemma \ref{rearrangement_comparison} and get $u^*\prec \bar{u}$.
\hfill$\Box$

\end{document}